\theoremstyle{plain}
\newtheorem{theorem}{\textrm{\textbf{Theorem}}}[section]
\newtheorem{corollary}[theorem]{\textrm{\textbf{Corollary}}}
\newtheorem{proposition}[theorem]{\textrm{\textbf{Proposition}}}
\newtheorem{lemma}[theorem]{\textrm{\textbf{Lemma}}}
\theoremstyle{definition}
\newtheorem{definition}[theorem]{\textrm{\textbf{Definition}}}
\newtheorem{remark}[theorem]{\textrm{\textbf{Remark}}}
\theoremstyle{remark}
\numberwithin{equation}{section}
\def\R{\mathbb{R}}
\def\N{\mathbb{N}}
\def\calh{\mathcal H}
\newcommand{\bp}{\ensuremath{\mathbb P}}
\newcommand{\calm}{\ensuremath{\mathcal M}}
\DeclareMathOperator{\Id}{\ensuremath{Id}}
\begin{document}

\title{Critical exponents of graphs}

\author{Dominique Guillot \and Apoorva Khare \and Bala
Rajaratnam}

\thanks{D.G., A.K., and B.R.~are partially supported by the following: US
Air Force Office of Scientific Research grant award FA9550-13-1-0043, US
National Science Foundation under grant DMS-0906392, DMS-CMG 1025465,
AGS-1003823, DMS-1106642, DMS-CAREER-1352656, Defense Advanced Research
Projects Agency DARPA YFA N66001-11-1-4131, the UPS Foundation,
SMC-DBNKY, and an NSERC postdoctoral fellowship}

\address[D.G.]{Department of Mathematical Sciences, University of
Delaware, Newark, DE - 19716, USA}

\address[A.K. and B.R.]{Departments of Mathematics and Statistics,
Stanford University, Stanford, CA - 94305, USA}

\email{D.G. dguillot@udel.edu; A.K. khare@stanford.edu; B.R.
brajaratnam01@gmail.com}

\date{\today}

\keywords{Matrices with structure of zeros, chordal graphs, entrywise
positive maps, positive semidefiniteness, Loewner ordering, fractional
Schur powers}

\subjclass[2010]{05C50 (primary); 15B48 (secondary)}
\begin{abstract}
The study of entrywise powers of matrices was originated by Loewner in
the pursuit of the Bieberbach conjecture. Since the work of FitzGerald
and Horn (1977), it is known that $A^{\circ \alpha} := (a_{ij}^\alpha)$
is positive semidefinite for every entrywise nonnegative $n \times n$
positive semidefinite matrix $A = (a_{ij})$ if and only if $\alpha$ is a
positive integer or $\alpha \geq n-2$. This surprising result naturally
extends the Schur product theorem, and demonstrates the existence of a
sharp phase transition in preserving positivity. In this paper, we study
when entrywise powers preserve positivity for matrices with structure of
zeros encoded by graphs.  To each graph is associated an invariant called
its \textit{critical exponent}, beyond which every power preserves
positivity.
In our main result, we determine the critical exponents of all
chordal/decomposable graphs, and relate them to the geometry of the
underlying graphs. We then examine the critical exponent of important
families of non-chordal graphs such as cycles and bipartite graphs.
Surprisingly, large families of dense graphs have small critical
exponents that do not depend on the number of vertices of the graphs.
\end{abstract}
\maketitle

\section{Introduction and main result}\label{Sintro}

Let $f: \R \to \R$ be a real function and denote by $\bp_n$ the cone of
$n \times n$ real symmetric positive semidefinite matrices. The function
$f$ naturally operates entrywise on $\bp_n$ by defining $f[A] :=
(f(a_{ij}))$. Whether or not the mapping $A \mapsto f[A]$ preserves
positivity (i.e., $f[A] \in \bp_n$ for all $A \in \bp_n$) is an important
problem that has been well-studied in the literature - see e.g.
Schoenberg \cite{Schoenberg42}, Rudin \cite{Rudin59}, Herz \cite{Herz63},
Horn \cite{Horn}, Christensen and Ressel \cite{Christensen_et_al78},
Vasudeva \cite{vasudeva79}, and FitzGerald et al \cite{fitzgerald}. In
one of their main results in the area, Schoenberg and Rudin
(\cite{Schoenberg42}, \cite{Rudin59}) have shown an important
characterization of functions $f$ that preserve positivity on $\bp_n$ for
\textit{all} $n$. Their results show that such functions are precisely
the absolutely monotonic functions (i.e., they are analytic with
nonnegative Taylor coefficients).

When the dimension $n$ is fixed, obtaining useful characterizations of
functions preserving positivity is difficult, and very few results are
known. In the pursuit of the Bieberbach conjecture (de Branges's
theorem), Loewner was led to study which real powers $\alpha > 0$
preserve positivity (i.e., positive semidefiniteness) for all $n \times
n$ positive semidefinite matrices with positive entries. As a consequence
of the Schur product theorem, every integer power trivially preserves
positivity when applied entrywise. Identifying the other real powers that
do so is a much more challenging task. The problem was solved by
FitzGerald and Horn in 1977; in their landmark paper \cite{FitzHorn},
they show that a real power $\alpha > 0$ preserves positivity on $n
\times n$ matrices if and only if $\alpha$ is a positive integer or
$\alpha \geq n-2$. The work of FitzGerald and Horn was later extended in
different directions by multiple authors including Bhatia and Elsner
\cite{Bhatia-Elsner}, Hiai \cite{Hiai2009}, and Guillot, Khare and
Rajaratnam \cite{GKR-crit-2sided} - see \cite{GKR-crit-2sided} for a
history of the problem and recent developments.

In this paper, we significantly generalize the original problem by
studying which entrywise powers preserve positivity when the matrices
have an additional structure of zeros that is encoded by a graph.
Motivation for this problem comes from its connection to the
regularization of covariance/correlation matrices in high-dimensional
probability and statistics. The study of entrywise functions preserving
positivity has recently received renewed attention in this area. For
instance, powering up is a way to effectively and efficiently separate
out signal from noise; see e.g.~\cite{Li_Horvath, Zhang_Horvath}. More
generally, it is common to use entrywise functions in high-dimensional
probability and statistics to regularize covariance/correlation matrices
and improve their properties (e.g., condition number, Markov random field
structure, etc.) - see \cite{bickel_levina, Guillot_Rajaratnam2012,
Guillot_Rajaratnam2012b, hero_rajaratnam, Hero_Rajaratnam2012}.
Preserving positivity is critical for such techniques to be useful in
downstream applications. Moreover, in that setting, preserving positivity
for all dimensions as in the classical setting of Schoenberg's result is
unnecessarily restrictive since the dimension of the problem is given. It
is thus natural to ask which powers preserve positivity for matrices of a
given size.

In recent work by the authors \cite{GKR-lowrank,
Guillot_Khare_Rajaratnam2012, Guillot_Rajaratnam2012b}, classical results
by Schoenberg and Rudin were extended in fixed dimensions to various
settings motivated by modern-day applications.
These include: 1) characterizing functions preserving positivity when
applied only to the off-diagonal elements of matrices (as is often the
case in applications), 2) preserving positivity under rank constraints,
and 3) preserving positivity under sparsity constraints. Motivation for
the second problem comes from the fact that the rank of
covariance/correlation matrices corresponds to the sample size used to
estimate them, and that such matrices are very often estimated from small
samples in modern-day applications. The third problem is motivated by the
fact that sparsity in covariance/correlation matrices generally
corresponds to independence or conditional independence of the
corresponding random variables. The problem of regularizing matrices with
an original sparsity structure thus naturally occurs when there is prior
knowledge available about these dependencies.  

The present paper focuses on matrices with prescribed structure of zeros.
Such matrices naturally occur in combinatorics (e.g.~ in spectral graph
theory) and many other areas of mathematics - see
e.g.~\cite{Agler_et_al_88, Bala-PD-completions, Fallat-Hogben,
Grone-PD-completions}. These matrices also occur naturally in multiple
fields of the broader mathematical sciences such as optimization, network
theory, and in modern high-dimensional probability and statistics. 

Let $\N$ denote the set of positive integers.
Given $n \in \N$ and $I \subset \R$, let $\bp_n(I)$ denote the set of
symmetric positive semidefinite $n \times n$ matrices with entries in
$I$. Given a simple graph $G = (V,E)$ with nodes $V = \{ 1,2,\dots,n\}$,
and a subset $I \subset \R$, define
\begin{equation}
\bp_G(I) := \{ A \in \bp_n(I) : a_{ij} = 0\ \forall (i,j) \not\in E,\
i\ne j \}.
\end{equation}
For simplicity, we denote $\bp_G(\R)$ by $\bp_G$. All graphs in the
remainder of the paper are assumed to be finite and simple.

An important family of graphs in mathematics as well as in applications
is the family of \textit{chordal graphs} (see e.g.~\cite[Chapter
5.5]{Diestel}, \cite[Chapter 4]{golumbic}). Recall that chordal graphs
(also known as decomposable graphs, triangulated graphs, or rigid circuit
graphs) are graphs in which all cycles of four or more vertices have a
chord. Chordal graphs are perfect, and have a rich structure that has
been well-studied in the literature. They also play a fundamental role in
multiple areas including the matrix completion problem (see
e.g.~\cite{Bala-PD-completions,Grone-PD-completions,paulsen_et_al}),
maximum likelihood estimation in the theory of Markov random fields
\cite[Section 5.3]{lauritzen}, and perfect Gaussian elimination
\cite{golumbic}. For example, when solving sparse linear systems, it is
important to preserve the structure of zeros of the original matrix for
storage and computation efficiency purposes. By a result of Golumbic
\cite[Theorem 12.1]{golumbic}, Gaussian elimination can be performed on a
given sparse matrix without ever changing a zero entry to a nonzero entry
if and only if  the structure of zeros of the matrix forms a chordal
graph. 

In our main result, we characterize all the powers $\alpha \geq 0$ that
preserve positivity on $\bp_G([0,\infty))$ for each chordal graph $G$. 
We also characterize all $\alpha \in \R$ for which the odd and even
extensions to $\R$ of the power functions preserve positivity on
$\bp_G(\R)$. Given FitzGerald and Horn's result mentioned above, it is
natural to believe that the critical exponent of a graph $G$ is
$\omega(G)-2$, where $\omega(G)$ is the clique number of $G$.
Surprisingly, for non-complete graphs, this is not always the case.
Nevertheless, as we demonstrate in the paper, the set of powers
preserving positivity on $\bp_G$  for a chordal graph $G$ is driven by
the existence of specific subgraphs of $G$. Our results thus connect the
discrete structure of the graph $G$ to the analytic properties of the
cone of positive semidefinite matrices $\bp_G$, and naturally extend the
complete graph case that was studied by FitzGerald and Horn in
\cite{FitzHorn}.

Imposing the additional constraints on the structure of zeros leads to
challenging problems. For example, in the case of chordal graphs,
characterizing powers preserving positivity requires intricate arguments
involving Loewner super-additive functions. More generally, many
familiar constructions that can be used to study positivity (e.g.,
working with Schur complements) generally fail to preserve the underlying
structure of zeros. As a consequence, many new techniques have to be
developed to address such issues.

In the last section of the paper, we also determine the set of powers
preserving positivity for many broad families of non-chordal graphs
including cycles, bipartite graphs, and coalescences of graphs. In
particular, we show that for some families of dense graphs (e.g.~complete
bipartite graphs), every power greater than $1$ preserves positivity on
$\bp_G([0,\infty))$. This result came as a surprise since, as shown by
FitzGerald and Horn \cite{FitzHorn}, non-integer powers smaller than
$n-2$ do not preserve positivity on $n \times n$ matrices when no
additional structure of zeros is imposed. The result also has important
implications for the regularization of covariance/correlation matrices,
by showing that small powers can be safely used to regularize
covariance/correlation matrices having an appropriate original structure
of zeros. 

The remainder of the paper is structured as follows: the key definitions
and the main theorem of the paper are introduced in the rest of Section
\ref{Sintro}. Useful preliminary results are discussed in Section
\ref{Sprelim}. The main theorem is proved in Section \ref{Smain},
followed by a study of non-chordal graphs in Section \ref{Snon-chordal}.
We conclude by discussing further questions and extensions.

\subsection{Main result}\label{S2}

In order to state our main theorem, we begin by introducing some
notation. Given two $n \times n$ matrices $A = (a_{ij})$ and $B =
(b_{ij})$, their Hadamard (or Schur, or entrywise) product, denoted by $A
\circ B$, is defined by $A \circ B  := (a_{ij} b_{ij})$. Note that $A
\circ B$ is a principal submatrix of the tensor product $A \otimes B$. As
a consequence, if $A$ and $B$ are positive (semi)definite, then so is $A
\circ B$.  This result is known in the literature as the \textit{Schur
product theorem} \cite{Schur1911}. Given $\alpha \in \R$, we denote the
entrywise $\alpha$th power of a matrix $A$ with nonnegative entries by
$A^{\circ \alpha} := (a_{ij}^\alpha)$, where we define $0^\alpha := 0$
for all $\alpha$. As a consequence of the Schur product theorem, it is
clear that $A^{\circ k}$ is positive (semi)definite for all positive
(semi)definite matrices $A$ and all  $k \in \N$. Note that $A^{\circ
\alpha}$ is not always well-defined if $a_{ij} \in \R$.
Following Hiai \cite{Hiai2009}, it is natural to replace the power
functions by their odd and even extensions to $\R$ in order to deal with
arbitrary positive semidefinite matrices. Given $\alpha \in \R$, we
define the odd and even extensions of the power functions as follows:
\begin{equation}
\psi_\alpha(x) := {\rm sgn}(x) |x|^\alpha, \qquad \phi_\alpha(x) :=
|x|^\alpha, \qquad \forall\ x \in \R \setminus \{ 0 \},
\end{equation}

\noindent and $\psi_\alpha(0) = \phi_\alpha(0)  := 0$. Given $f: \R \to
\R$, and $A = (a_{ij})$, define $f[A] := (f(a_{ij}))$. We now introduce
the main objects of study in this paper.  

\begin{definition}\label{D1}
Let $n \geq 2$ and let $G = (V,E)$ be a simple graph on $V =
\{1,\dots,n\}$. We define: 
\begin{align*}
\calh_G := &\ \{ \alpha \in \R : A^{\circ \alpha} \in \bp_G \text{ for
all } A \in \bp_G([0,\infty)) \},\\
\calh_G^\psi := &\ \{ \alpha \in \R : \psi_\alpha[A] \in \bp_G \text{ for
all } A \in \bp_G(\R) \}, \\
\calh_G^\phi := &\ \{ \alpha \in \R : \phi_\alpha[A] \in \bp_G \text{ for
all } A \in \bp_G(\R) \}.
\end{align*}
\end{definition}

In particular, observe that if $H \subset G$ is a subgraph, then $\calh_H
\supset \calh_G$. Moreover, $\calh_G$ is closed for all graphs $G$.

Denote by $K_n$ the complete graph on $n$ vertices. The sets
$\calh_{K_n}, \calh_{K_n}^\psi$, and $\calh_{K_n}^\phi$ were computed
through several papers and the following theorem summarizes their
results. The reader is referred to \cite{GKR-crit-2sided} for more
details. 

\begin{theorem}[FitzGerald--Horn \cite{FitzHorn}, Bhatia--Elsner
\cite{Bhatia-Elsner}, Hiai \cite{Hiai2009}, Guillot--Khare--Rajaratnam
\cite{GKR-crit-2sided}]\label{Tcomplete}
Let $n \geq 2$. The $\calh$-sets of powers preserving positivity for $G =
K_n$ are: 
\begin{align*}
\calh_{K_n} = &\ \N \cup [n-2,\infty), \\
\calh_{K_n}^\psi = &\ (-1+2\N) \cup [n-2,\infty), \\
\calh_{K_n}^\phi = &\ 2\N \cup [n-2, \infty).
\end{align*}
\end{theorem}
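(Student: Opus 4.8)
The plan is to prove Theorem~\ref{Tcomplete} by establishing the three statements in a coordinated way, treating $\calh_{K_n}$ first and then deducing (or separately arguing) the $\psi$- and $\phi$-variants. For the nonnegative-entry case $\calh_{K_n}=\N\cup[n-2,\infty)$: the containment $\N\cup[n-2,\infty)\subseteq\calh_{K_n}$ is the positive direction, while the reverse containment is the ``sharpness'' direction. First I would handle membership of positive integers, which is immediate from the Schur product theorem (powers $A^{\circ k}$ are principal submatrices of $k$-fold tensor powers). Next I would prove that every $\alpha\geq n-2$ lies in $\calh_{K_n}$; the classical route here is the FitzGerald--Horn integral identity, writing for $a,b>0$
\[
a^\alpha - b^\alpha = \alpha \int_0^1 (b + t(a-b))^{\alpha-1}\,(a-b)\,dt,
\]
and iterating it to express $A^{\circ\alpha}$, after subtracting a rank-one piece, as an integral average of Hadamard products of a fixed positive semidefinite matrix (built from $A$) with entrywise powers $(\,\cdot\,)^{\circ(\alpha-1)}$, then inducting on $n$: reducing the exponent threshold from $n-2$ to $n-3$ costs one application and one rank-one correction. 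The base case $n=2$ is trivial since every $2\times2$ PSD matrix with nonnegative entries stays PSD under any nonnegative power.

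For the sharpness direction — that no $\alpha\in(0,\infty)\setminus\N$ with $\alpha<n-2$ belongs to $\calh_{K_n}$ — the standard construction is to test on rank-one-plus-perturbation matrices. Concretely, take $A = (1 + t\, u_i u_j)_{i,j}$ or, more flexibly, $A$ of the form $(\cos((i-j)\theta))$-type Gram matrices, or the one-parameter family $A_\epsilon = \mathbf{1}\mathbf{1}^T + \epsilon C$ for a suitable symmetric $C$, and examine the expansion of $A_\epsilon^{\circ\alpha}$ in powers of $\epsilon$. The key computation is that the coefficient of $\epsilon^{\lfloor\alpha\rfloor+1}$ (the first non-integer-order term) in a suitable quadratic form $v^T A_\epsilon^{\circ\alpha} v$ can be made negative by choosing $v$ in (or near) the kernel of the lower-order terms; making this work for all $n$ up to the threshold requires choosing $C$ with enough ``genericity,'' e.g.\ $C = v v^T$ with $v$ having distinct nonzero coordinates, so that the relevant Vandermonde-type obstruction appears. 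This is the step I expect to be the main obstacle: producing, for each non-integer $\alpha<n-2$, an explicit $n\times n$ PSD matrix with nonnegative entries whose $\alpha$th Hadamard power fails to be PSD, with a clean enough argument to pin down the exact threshold $n-2$ rather than merely ``some finite threshold.''

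Finally, for $\calh_{K_n}^\psi$ and $\calh_{K_n}^\phi$ I would leverage the nonnegative case together with parity bookkeeping. On the positive side: if $\alpha\geq n-2$, then for any $A\in\bp_{K_n}(\R)$ one writes $\psi_\alpha[A] = D\,(|A|)^{\circ\alpha}\,D$ up to handling signs, where $|A| = (|a_{ij}|)$ — but $|A|$ need not be PSD, so instead the right approach is Hiai's: $\psi_\alpha[A]$ and $\phi_\alpha[A]$ for $A$ PSD can be controlled because $\psi_\alpha,\phi_\alpha$ are (for $\alpha$ in the stated sets) limits/combinations of functions of the form $x\mapsto x^{2k}$ and $x\mapsto x|x|^{2k}$, and one reduces to the nonnegative Hadamard-power case by a tensor/averaging trick. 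The odd powers $-1+2\N$ and even powers $2\N$ land in the respective sets by the Schur product theorem directly (for $\psi_{2k+1}$, note $\psi_{2k+1}[A] = A^{\circ(2k+1)}$; for $\phi_{2k}$, $\phi_{2k}[A] = A^{\circ 2k}$). On the sharpness side, the non-integer threshold $n-2$ transfers from the nonnegative case by restricting to nonnegative matrices, while the exclusion of the ``wrong parity'' integers ($\psi$ fails at positive even integers, $\phi$ fails at all odd integers and at $0$) is shown by a small fixed-dimension counterexample — typically a $2\times2$ or $3\times3$ matrix with a negative off-diagonal entry — since e.g.\ $\phi_1[A] = |A|$ is not PSD for $A = \begin{pmatrix}1 & -1\\ -1 & 1\end{pmatrix}$-type blocks embedded appropriately. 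Assembling these three ingredients (integer membership, the $[n-2,\infty)$ interval, and the two counterexample families) yields all three displayed equalities.
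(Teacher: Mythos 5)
The paper does not prove Theorem~\ref{Tcomplete}: it is stated as a summary of known results and imported by citation (FitzGerald--Horn for $\calh_{K_n}$, Bhatia--Elsner, Hiai and \cite{GKR-crit-2sided} for the $\psi$- and $\phi$-variants), so there is no in-paper argument to compare against. Your sketch follows the standard route from those sources --- Schur product for integers, the FitzGerald--Horn integral identity plus induction for $[n-2,\infty)$, and perturbations of the all-ones matrix for sharpness --- and the first two ingredients are outlined correctly.

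Two steps as written would not go through. First, in the sharpness argument the coefficient of $\epsilon^{\lfloor\alpha\rfloor+1}$ in $v^T A_\epsilon^{\circ\alpha} v$ with $A_\epsilon=(1+\epsilon u_iu_j)$ is $\binom{\alpha}{m+1}\bigl(v^Tu^{\circ(m+1)}\bigr)^2$ with $m=\lfloor\alpha\rfloor$, and $\binom{\alpha}{m+1}>0$ for non-integer $\alpha\in(m,m+1)$; it cannot be made negative. The first negative binomial coefficient is $\binom{\alpha}{m+2}$, so one must choose $v$ orthogonal to $u^{\circ 0},\dots,u^{\circ(m+1)}$ with $v^Tu^{\circ(m+2)}\neq 0$, which requires $n\geq m+3$ --- and it is exactly this count that yields the threshold $n-2$ rather than $n-1$. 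Second, the exclusion of wrong-parity integers from $\calh_{K_n}^\psi$ and $\calh_{K_n}^\phi$ cannot be done with a ``small fixed-dimension'' witness: $\phi_1[A]=|A|$ is positive semidefinite for every $A\in\bp_2$ and indeed for every $A\in\bp_3$ (consistent with $1\geq 3-2$), and your $2\times 2$ example $\left(\begin{smallmatrix}1&-1\\-1&1\end{smallmatrix}\right)$ has $|A|$ equal to the all-ones matrix, which is PSD. The failures of $\psi_{2k}$ and $\phi_{2k-1}$ below $n-2$ only appear in dimension roughly $2k+3$ and require genuinely $n$-dimensional witnesses --- the rank-two matrices ${\bf u}{\bf u}^T+{\bf v}{\bf v}^T$ of Theorem~\ref{Tsuperadd}, or the Bhatia--Elsner matrix $(\cos((i-j)\pi/n))_{i,j}$ that this paper reuses elsewhere. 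With those two repairs the outline matches the literature proofs being cited.
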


The above surprising result shows that there is a threshold value above
which every power function $x^\alpha, \psi_\alpha$, or $\phi_\alpha$
preserves positivity on $\bp_n([0,\infty))$ or $\bp_n(\R)$, when applied
entrywise. The threshold is commonly referred to as the \textit{critical
exponent} for preserving positivity. It is natural to extend the notion
of \textit{critical exponents} to arbitrary graphs.

\begin{definition}
Given a graph $G$, define the \emph{Hadamard critical exponents of $G$}
to be
\begin{align*}
CE_H(G) := &\ \min \{\alpha \in \R : A \in \bp_G([0,\infty)) \Rightarrow
A^{\circ \beta} \in \bp_G \textrm{ for every } \beta \geq \alpha\},\\
CE_H^\psi(G) := &\ \min \{\alpha \in \R : A \in \bp_G(\R) \Rightarrow
\psi_\alpha[A] \in \bp_G \textrm{ for every } \beta \geq \alpha\},\\
CE_H^\phi(G) := &\ \min \{\alpha \in \R : A \in \bp_G(\R) \Rightarrow
\phi_\alpha[A] \in \bp_G \textrm{ for every } \beta \geq \alpha\}.
\end{align*}
\end{definition}

\noindent Note that since every graph $G = (V,E)$ is contained in a
complete graph, the critical exponents of $G$ are well defined by Theorem
\ref{Tcomplete}, and bounded above by $|V|- 2$. 

We can now state our main result. Let $K_n^{(1)}$ denote the complete
graph on $n$ vertices with one edge missing. 

\begin{theorem}[Main result]\label{Tmain}
Let $G$ be any chordal graph with at least $2$ vertices and let $r$ be
the largest integer such that either $K_r$ or $K_r^{(1)}$ is 
a subgraph of $G$. Then
\begin{align*}
\calh_G &= \N \cup [r-2,\infty), \\
\calh_G^\psi &= (-1+2\N) \cup [r-2, \infty), \\
\calh_G^\phi &= 2\N \cup [r-2, \infty). 
\end{align*}
In particular, $CE_H(G) = CE_G^\psi(G) = CE_G^\phi(G) = r-2$. 
\end{theorem}

Theorem \ref{Tmain} thus demonstrates that an increase in sparsity
generally reduces the Hadamard critical exponents. The precise way in
which the critical exponents of chordal graphs are lowered is driven by
the size of their largest maximal or nearly maximal cliques. This fact is
especially important in applications, where covariance/correlation
matrices need to be regularized by minimally modifying their entries
while simultaneously preserving positive semidefiniteness.
Theorem \ref{Tmain} shows that small powers can be used to achieve such a
goal if the original matrices are sparse enough.

\begin{remark}
Theorem \ref{Tmain} shows that the critical exponent of a chordal graph
$G$ is bounded above by $\max_{v \in V(G)} \deg(v) - 1$, as well as by
$\omega(G) - 1 = tw(G)$, where $\omega(G)$ and $tw(G)$ denote the clique
number and treewidth of $G$ respectively. Note however that these bounds
are not sharp. For instance, for star graphs the critical exponent is
always $1$ (see Theorem \ref{Taddtrees}), and for complete graphs the
critical exponent is $\omega(G)-2$. Also note that for all graphs $G$, we
have $\calh_G^\psi, \calh_G^\phi \subset \calh_G$, and $r-2$ provides a
lower bound for the critical exponents of $G$, where $K_r$ or $K_r^{(1)}$
is a subgraph of $G$.
\end{remark}

The rest of the paper is devoted to proving Theorem \ref{Tmain}. Most of
the techniques and constructions that are traditionally used to study
powers preserving positivity (e.g.~spectral methods, Schur complements)
do not preserve the structure of zeros of matrices. Studying powers
preserving positivity under sparsity constraints is thus a challenging
task that requires new ideas. In the rest of the paper, we develop
multiple techniques for computing the $\calh$-sets of graphs. In addition
to proving Theorem \ref{Tmain}, we use these techniques to compute the
critical exponent of many non-chordal graphs as well. We demonstrate that
the critical exponent does not always correspond to the order of the
largest induced $K_r$ or $K_r^{(1)}$ minus $2$ when $G$ is non-chordal.
We also show that many dense graphs (e.g. complete bipartite graphs) have
a surprisingly small critical exponent that does not depend on their
number of vertices. This is in stark contrast to the family of complete
graphs $K_n$, for which the critical exponent is $n-2$.

\section{Preliminary results: pendant edges and trees}\label{Sprelim}

We begin our analysis by recalling a general result that classifies
entrywise functions preserving positivity for matrices with zeros
according to a tree. Given a $n \times n$ symmetric matrix $A = (a_{ij})$
and a graph $G = (V,E)$ with vertex set $V = \{1,\dots,n\}$ and edge set
$E$, denote by 
\begin{equation*}
f_G[A] := \begin{cases}
f(a_{ij}) & \textrm{ if } (i,j) \in E \textrm{ or } i=j, \\
0 & \textrm{otherwise}.
\end{cases}
\end{equation*}

\begin{theorem}[Guillot, Khare, and Rajaratnam,
\cite{Guillot_Rajaratnam2012b}]\label{Tsparse}
Suppose $I = [0,R)$ for some $0 < R \leq \infty$, and $f : I \to
[0,\infty)$. Let $G$ be a tree with at least $3$ vertices, and let $P_3$
denote the path on $3$ vertices. Then the following are equivalent:
\begin{enumerate}
\item $f_G[A] \in \bp_G$ for every $A \in \bp_G(I)$;
\item $f_T[A] \in \bp_T$ for all trees $T$ and all matrices $A \in
\bp_T(I)$;
\item $f_{P_3}[A] \in \bp_{P_3}$ for every $A \in \bp_{P_3}(I)$; 
\item The function $f$ satisfies:
\begin{equation}\label{Emidconvex}
f(\sqrt{xy})^2 \leq f(x) f(y), \qquad \forall x,y \in I
\end{equation}
and is superadditive on $I$, i.e., 
\begin{equation}\label{Esuper}
f(x+y) \geq f(x) + f(y), \qquad \forall x,y,x+y \in I.
\end{equation}
\end{enumerate}
\end{theorem}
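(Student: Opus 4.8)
The plan is to establish the cyclic chain of implications $(2)\Rightarrow(1)\Rightarrow(3)\Rightarrow(4)\Rightarrow(2)$. The implication $(2)\Rightarrow(1)$ is immediate, since $G$ is itself a tree with at least three vertices. For $(1)\Rightarrow(3)$, I would use that every tree with at least three vertices contains $P_3$ as an induced subgraph (any vertex of degree $\ge 2$ together with two of its neighbours, which cannot be adjacent since the graph is a tree). Given $B\in\bp_{P_3}(I)$, extend it to a block-diagonal matrix $\tilde B\in\bp_G(I)$ consisting of the $3\times 3$ block $B$ on the three path vertices, an arbitrary constant from $I$ in every remaining diagonal entry, and $0$ in every remaining off-diagonal entry. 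Then $\tilde B$ has the zero pattern of $G$ (this is where the induced hypothesis is used), is positive semidefinite, and $f_G[\tilde B]$ is block diagonal with one block equal to $f_{P_3}[B]$; so $(1)$ forces $f_{P_3}[B]\succeq 0$.

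For $(3)\Rightarrow(4)$, I would derive \eqref{Emidconvex} by applying $(3)$ to the matrix
\[
\begin{pmatrix} x & \sqrt{xy} & 0\\ \sqrt{xy} & y & 0\\ 0 & 0 & 0\end{pmatrix}\ \in\ \bp_{P_3}(I)
\]
and reading off the leading $2\times 2$ principal minor of its image. For superadditivity \eqref{Esuper}, I would apply $(3)$ to
\[
\begin{pmatrix} x & x & 0\\ x & x+y & y\\ 0 & y & y\end{pmatrix}
= x\,(1,1,0)^{T}(1,1,0) + y\,(0,1,1)^{T}(0,1,1)\ \in\ \bp_{P_3}(I);
\]
its image under $f_{P_3}$ has determinant $f(x)f(y)\bigl(f(x+y)-f(x)-f(y)\bigr)$, and combining the sign of this determinant with the $2\times 2$ principal minors of the image indexed by $\{1,2\}$ and $\{2,3\}$ together with the hypothesis $f\ge 0$ yields $f(x+y)\ge f(x)+f(y)$ in every case, including when $f(x)$ or $f(y)$ vanishes.

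The substantive implication is $(4)\Rightarrow(2)$, which I would deduce from the following structural lemma: for any tree $T$, every $A=(a_{ij})\in\bp_T(I)$ admits a decomposition $A=\sum_{(i,j)\in E(T)}M_{ij}+D$ in which each $M_{ij}$ is a rank-$\le 1$ positive semidefinite matrix supported on the principal submatrix indexed by $\{i,j\}$, with off-diagonal entry $a_{ij}$ and nonnegative diagonal entries $c_{ij},c_{ji}$ satisfying $c_{ij}c_{ji}=a_{ij}^{2}$, $D$ is a nonnegative diagonal matrix, and $\sum_{j\sim i}c_{ij}\le a_{ii}$ for every $i$. This I would prove by induction on $|V(T)|$ via leaf removal: if $v$ is a leaf adjacent to $u$ and $a_{vv}>0$, the Schur complement of the $(v,v)$ entry is $A'-c_{uv}\,e_ue_u^{T}$ with $c_{uv}=a_{uv}^{2}/a_{vv}$; this modifies $A$ only in its $(u,u)$ entry, hence lies in $\bp_{T-v}(I)$, and the case $a_{vv}=0$ forces $a_{uv}=0$ and is trivial. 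Granting the lemma, $(4)\Rightarrow(2)$ follows by applying $f$ entrywise: \eqref{Emidconvex} gives $f(c_{ij})f(c_{ji})\ge f(\sqrt{c_{ij}c_{ji}})^{2}=f(a_{ij})^{2}$, so each $f_T[M_{ij}]$ is positive semidefinite; iterating \eqref{Esuper} and using the monotonicity of $f$ — which follows from \eqref{Esuper} and $f\ge 0$ via $f(y)\ge f(x)+f(y-x)\ge f(x)$ — gives $f(a_{ii})-\sum_{j\sim i}f(c_{ij})\ge 0$; hence $f_T[A]=\sum_{(i,j)}f_T[M_{ij}]+D'$ with $D'$ a nonnegative diagonal matrix, exhibiting $f_T[A]$ as a sum of positive semidefinite matrices with the zero pattern of $T$.

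I expect the main obstacle to be the structural lemma, and specifically the verification that the leaf-removal/Schur-complement step genuinely stays inside $\bp_{T-v}(I)$ rather than introducing fill-in. This is precisely where the tree (chordal) hypothesis is essential: a leaf is adjacent to a single vertex, so its elimination perturbs only one diagonal entry, whereas for a general graph the analogous Schur complement creates new nonzero off-diagonal entries and the argument breaks down. The decomposition itself is closely related to known positive semidefinite completion and factorization results for chordal graphs (e.g.\ \cite{Agler_et_al_88, Grone-PD-completions}), which one could invoke instead; I would include the short leaf-induction for self-containedness. A secondary, purely bookkeeping matter is the systematic treatment of degenerate cases with vanishing diagonal entries throughout all four implications.
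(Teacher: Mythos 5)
The paper does not prove Theorem \ref{Tsparse}; it is imported without proof from the authors' earlier work \cite{Guillot_Rajaratnam2012b}, so there is no in-paper argument to compare against. Judged on its own terms, your proposal is correct and is the natural argument. The cycle $(2)\Rightarrow(1)\Rightarrow(3)\Rightarrow(4)\Rightarrow(2)$ is sound; the two $3\times 3$ test matrices do extract \eqref{Emidconvex} and \eqref{Esuper} (the degenerate cases being handled, as you say, by the $\{1,2\}$ and $\{2,3\}$ principal minors together with $f\ge 0$); and your leaf-elimination lemma is exactly where the tree structure enters: a leaf has a unique neighbour, so its Schur complement perturbs only one diagonal entry and creates no fill-in, and that entry stays in $I=[0,R)$ because it is a nonnegative diagonal entry of a positive semidefinite matrix bounded above by $a_{uu}$.

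Two bookkeeping points should be made explicit rather than deferred. First, in $(1)\Rightarrow(3)$ the matrix $f_G[\tilde B]$ is block diagonal only if $f(0)=0$, since $f_G[\cdot]$ places $f(0)$ at every edge of $G$ outside the chosen $P_3$. Either derive $f(0)=0$ first (apply $(1)$ to the zero matrix and look at the induced $P_3$ principal submatrix, whose determinant is $-f(0)^3$), or simply drop the block-diagonality claim and extract $f_{P_3}[B]$ as a principal submatrix of $f_G[\tilde B]$, which already gives positive semidefiniteness. Second, in $(4)\Rightarrow(2)$ you need $f(0)=0$ (immediate from \eqref{Esuper} with $x=y=0$ and $f\ge 0$) so that each $f_T[M_{ij}]$ is genuinely supported on $\{i,j\}$, and you should record that the iterated superadditivity $f\bigl(\sum_{j\sim i}c_{ij}\bigr)\ge\sum_{j\sim i}f(c_{ij})$ is legitimate because all partial sums lie in $I$, being bounded by $a_{ii}<R$; likewise $c_{ij}=a_{ij}^2/a_{jj}\le a_{ii}<R$ guarantees that \eqref{Emidconvex} is being invoked only at points of $I$. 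With these insertions the proof is complete.
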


It follows from Theorem \ref{Tsparse} that $\calh_G = [1,\infty)$ for any
tree $G$. We now generalize this result to graphs obtained by pasting
trees to vertices of graphs, and to the functions $\psi_\alpha$ and
$\phi_\alpha$.

\begin{theorem}\label{Taddtrees}
Suppose $G'$ is not a disjoint union of copies of $K_2$. Construct a
graph $G$ from $G'$ by attaching finitely many (possibly empty) trees to
each node, at least one of which is not isolated. Then 
\begin{equation*}
\calh_G = \calh_{G'}, \quad  \calh^\psi_G = \calh^\psi_{G'},\textrm{ and}
\quad \calh^\phi_G = \calh^\phi_{G'}.
\end{equation*}
In particular, if $G$ is a tree with at least $3$ vertices, then $CE_H(G)
= CE_H^\phi(G) = CE_H^\psi(G) = 1$.
\end{theorem}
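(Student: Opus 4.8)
\noindent\textit{Proof plan.}
The inclusions $\calh_G\subseteq\calh_{G'}$, $\calh_G^{\psi}\subseteq\calh_{G'}^{\psi}$ and $\calh_G^{\phi}\subseteq\calh_{G'}^{\phi}$ are immediate, since $G'$ is a subgraph of $G$: any $A\in\bp_{G'}$ also lies in $\bp_G$, and each of the three entrywise maps sends the zero pattern of $A$ to itself, because $0\mapsto 0$. For the reverse inclusions the plan is to build $G$ from $G'$ by adjoining the attached trees one vertex at a time: rooting each attached tree at the node of $G'$ to which it is glued and listing its vertices outward from that root, every newly adjoined vertex is \emph{pendant}, i.e.\ has a single neighbour, which is already present. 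So it suffices to prove the following pendant step: \emph{if $f$ is one of $x^{\alpha},\psi_{\alpha},\phi_{\alpha}$ with $\alpha\geq 1$, if $f$ preserves positivity on $\bp_H$ (resp.\ on $\bp_H(\R)$ for $\psi_{\alpha},\phi_{\alpha}$) for a graph $H$, and $\widehat H$ is obtained from $H$ by adding a vertex $w$ whose only neighbour is $v\in V(H)$, then $f$ preserves positivity on $\bp_{\widehat H}$}. Granting this, one climbs the chain $G'=H_0\subset H_1\subset\cdots\subset H_m=G$ by induction, the base case being $\alpha\in\calh_{G'}$ (resp.\ $\calh_{G'}^{\psi},\calh_{G'}^{\phi}$); and $\alpha\geq 1$ comes for free, because the hypothesis on $G'$ forces $G'$ to contain $P_3$ as a subgraph, whence $\calh_{G'}\subseteq\calh_{P_3}=[1,\infty)$ by Theorem~\ref{Tsparse}, and a fortiori $\calh_{G'}^{\psi},\calh_{G'}^{\phi}\subseteq\calh_{G'}\subseteq[1,\infty)$.

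\medskip

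\noindent\textit{The pendant step.}
Take $A\in\bp_{\widehat H}$ (resp.\ $\bp_{\widehat H}(\R)$) and write it in block form with $w$ placed last: the diagonal block is $A_H:=A|_{V(H)}$, the corner entry is $t:=a_{ww}\geq 0$, and the off-diagonal column is $a_{vw}\,e_v$, supported only at $v$ because $v$ is the unique neighbour of $w$ in $\widehat H$. Applying $f$ entrywise and using $f(0)=0$, the matrix $f[A]$ has the same block shape with $f[A_H]$, $f(t)$ and $f(a_{vw})\,e_v$ in the three slots, and $f[A_H]\succeq 0$ by hypothesis. If $t=0$ then $A\succeq 0$ forces $a_{vw}=0$, so $f[A]=f[A_H]\oplus 0\succeq 0$; so assume $t>0$, whence $f(t)=t^{\alpha}>0$. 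Taking the Schur complement of $f[A]$ with respect to its $w$-corner, $f[A]\succeq 0$ is equivalent to $f[A_H]\succeq f(a_{vw})^{2}f(t)^{-1}e_ve_v^{\top}$. Let $\sigma\geq 0$ be the Schur complement of $A_H$ relative to its principal submatrix on $V(H)\setminus\{v\}$; then $0\leq a_{vv}-\sigma\leq a_{vv}$, and $B:=A_H-\sigma\,e_ve_v^{\top}$ is positive semidefinite (its Schur complement at $v$ vanishes) and still supported on $H$, since it differs from $A_H$ in the $(v,v)$ entry only. Hence $B\in\bp_H$ (resp.\ $\bp_H(\R)$) and $f[B]\succeq 0$ by hypothesis. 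As $A_H$ and $B$ agree off the $(v,v)$ position, so do $f[A_H]$ and $f[B]$; since $f$ restricted to $[0,\infty)$ is the map $t\mapsto t^{\alpha}$, which is superadditive because $\alpha\geq 1$, we get
\[
f[A_H]=f[B]+\bigl(f(a_{vv})-f(a_{vv}-\sigma)\bigr)e_ve_v^{\top}\succeq f[B]+f(\sigma)\,e_ve_v^{\top}.
\]
Schur complements are monotone for the Loewner order, and both sides here have the same principal submatrix on $V(H)\setminus\{v\}$; so the Schur complement $\sigma_f$ of $f[A_H]$ at $v$ satisfies $\sigma_f\geq f(\sigma)$, the Schur complement of $f[B]$ at $v$ being $\geq 0$ since $f[B]\succeq 0$. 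On the other hand $A\succeq 0$ gives $a_{vw}^{2}\leq\sigma t$, because the Schur complement of $A$ relative to $V(H)\setminus\{v\}$ equals $\left(\begin{smallmatrix}\sigma & a_{vw}\\ a_{vw} & t\end{smallmatrix}\right)\succeq 0$. Since $f(a_{vw})^{2}=\bigl(|a_{vw}|^{2}\bigr)^{\alpha}$ for each of the three maps and $x\mapsto x^{\alpha}$ is nondecreasing on $[0,\infty)$,
\[
f(a_{vw})^{2}=\bigl(|a_{vw}|^{2}\bigr)^{\alpha}\leq(\sigma t)^{\alpha}=f(\sigma)\,f(t)\leq\sigma_f\,f(t),
\]
which is exactly the inequality required above. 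Hence $f[A]\succeq 0$, and since its zero pattern is that of $\widehat H$ we conclude $f[A]\in\bp_{\widehat H}$. This establishes the pendant step, hence the reverse inclusions, hence $\calh_G=\calh_{G'}$, $\calh_G^{\psi}=\calh_{G'}^{\psi}$ and $\calh_G^{\phi}=\calh_{G'}^{\phi}$.

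\medskip

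\noindent\textit{The ``in particular''.}
Let $T$ be a tree with at least $3$ vertices. Then $T$ has a vertex of degree $\geq 2$, hence contains $P_3$, and is obtained from $P_3$ by attaching trees to its three vertices; so by the part just proved (trivially if $T=P_3$), $\calh_T=\calh_{P_3}$, $\calh_T^{\psi}=\calh_{P_3}^{\psi}$, $\calh_T^{\phi}=\calh_{P_3}^{\phi}$. Now $\calh_{P_3}=[1,\infty)$ by Theorem~\ref{Tsparse}; and $\calh_{P_3}^{\psi},\calh_{P_3}^{\phi}\subseteq\calh_{P_3}=[1,\infty)$, while conversely for any $\alpha\geq 1$ and any $A\in\bp_{P_3}(\R)$ a suitable $\pm 1$ diagonal conjugation reduces the positive semidefiniteness of $\psi_{\alpha}[A]$ and of $\phi_{\alpha}[A]$ to that of an entrywise $\alpha$th power of a matrix in $\bp_{P_3}([0,\infty))$, which holds since $\alpha\in\calh_{P_3}$. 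Thus $\calh_{P_3}^{\psi}=\calh_{P_3}^{\phi}=[1,\infty)$, and therefore $CE_H(T)=CE_H^{\psi}(T)=CE_H^{\phi}(T)=1$.

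\medskip

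\noindent\textit{Main obstacle.}
The whole argument rests on the pendant step, and within it on the bound $\sigma_f\geq f(\sigma)$: a priori, applying the nonlinear map $f$ could shrink the relevant Schur complement drastically, but the point is that passing from $A_H$ to the matrix $B$ (with vanishing Schur complement at $v$) alters only a single diagonal entry, so the question collapses to the one-variable superadditivity $f(x+y)\geq f(x)+f(y)$; the rest is the elementary $2\times 2$ Loewner inequality $a_{vw}^{2}\leq\sigma t$ and the multiplicativity $(\sigma t)^{\alpha}=\sigma^{\alpha}t^{\alpha}$ (the equality case of \eqref{Emidconvex}). The one genuinely technical matter is the bookkeeping with singular submatrices — using Moore--Penrose pseudoinverses in the Schur complements above and invoking their Loewner monotonicity in that generality — which is routine.
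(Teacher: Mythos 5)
Your proof is correct and follows essentially the same route as the paper's: both reduce the tree‑attachment to adding one pendant edge at a time and verify that step by a Schur‑complement comparison at the new vertex whose only real input is the one‑variable superadditivity of $x^{\alpha}$ for $\alpha\geq 1$ (your two‑stage bookkeeping with $\sigma$ and $\sigma_f$ is a slightly longer way of saying that $f[A]/f(a_{ww})-f[A/a_{ww}]$ is a diagonal matrix whose single nonzero entry is $f(x+y)-f(x)-f(y)\geq 0$, which is how the paper phrases it). The only point where you genuinely diverge is the base case $\calh^\psi_{P_3}=\calh^\phi_{P_3}=[1,\infty)$: the paper rules out $\alpha\in(0,1)$ via the explicit matrix $A(a)$ with $\det A(a)^{\circ\alpha}=1-(a^2)^\alpha-(1-a^2)^\alpha<0$, whereas you transfer the bound from $\calh_{P_3}$ (Theorem~\ref{Tsparse}) by a $\pm 1$ diagonal conjugation — both are valid, and your handling of the singular cases via pseudoinverses (or an $\epsilon\Id$ perturbation) is indeed routine.
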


In order to prove Theorem \ref{Taddtrees}, we first introduce additional
notation.

\begin{definition}
Let $A,B$ be two $n \times n$ matrices and $1 \leq i \leq n$. Write $A$
in block
form:
\[ A = \begin{pmatrix} A_{11} & {\bf u}_1 & A_{12}\\{\bf v}_1^T & a_{ii}
& {\bf v}_2^T\\A_{21} & {\bf u}_2 & A_{22} \end{pmatrix}. \]

\noindent If $a_{ii} \neq 0$, then the \textit{Schur complement} of
$a_{ii}$ in $A$ is defined to be
\[ A / a_{ii} := \begin{pmatrix} A_{11} & A_{12}\\A_{21} & A_{22}
\end{pmatrix} - a_{ii}^{-1} \begin{pmatrix} {\bf u}_1\\{\bf u}_2
\end{pmatrix} ({\bf v}_1^T\ {\bf v}_2^T) . \]

\noindent We also say that $A \geq B$ if $A-B \in \bp_n$.
\end{definition}

\begin{proof}[Proof of Theorem \ref{Taddtrees}]
We only prove the result for $\calh_G^\psi$; the proofs are
similar for $\calh_G, \calh_G^\phi$. The first step is to compute
$\calh^\psi_{P_3}$. Note by Theorem \ref{Tcomplete} that
\[ [0,\infty) = \calh^\psi_{K_2} \supset \calh^\psi_{P_3} \supset
\calh^\psi_{K_3} = \N \cup [1,\infty), \]

\noindent so it suffices to show that no $\alpha \in (0,1)$ preserves
$\bp_{P_3}$. Now fix $a \in [0,1]$ and consider the matrix \[ A(a) :=
\begin{pmatrix} 1 & a & 0\\a & 1 & \sqrt{1-a^2}\\0 & \sqrt{1-a^2} &
1\end{pmatrix} \in \bp_{P_3}. \]

\noindent It is clear that $\psi_\alpha[A(a)] = \phi_\alpha[A(a)] =
A(a)^{\circ \alpha}$ has determinant $1 - (a^2)^\alpha - (1-a^2)^\alpha$,
and this is strictly negative if $a \in (0,1)$ and $\alpha \in [0,1)$, by
the subadditivity of $x \mapsto x^\alpha$. 

Now suppose $G'$ is a connected, nonempty graph, $v \in V(G')$, and $G$
is obtained by attaching a pendant edge to $v$ (i.e., adding a new vertex
and connecting it by an edge to $v$). Also suppose $0 \in I \subset \R$
and $f :I \to \R$ is super-additive on $I \cap [0,\infty)$. Then we claim
that $f[-]$ preserves positivity on $\bp_G(I)$ if and only if it
preserves positivity on $\bp_{G'}(I)$. The proof uses arguments from the
proof of \cite[Theorem A]{Guillot_Khare_Rajaratnam2012}.

Finally, the result for trees follows immediately by applying the
previous two steps to $I = [0,\infty)$ and $f(x) = x^\alpha$ for $\alpha
\geq 1$, and to $I = \R$ and $f(x) = \psi_\alpha(x), \phi_\alpha(x)$ for
$\alpha \geq 1$.
\end{proof}

\noindent As a consequence, we characterize all graphs with Hadamard
critical exponent $0$.

\begin{corollary}\label{CE0}
Given a graph $G$, the following are equivalent:
\begin{enumerate}
\item $G$ is a disjoint union of copies of $K_2$.
\item $G$ does not contain a connected subgraph with three vertices.
\item $\calh_G = [0,\infty)$.
\item $0 \in \calh_G$.
\item $\calh_G \nsubseteq [1,\infty)$.
\item $CE_H(G) = 0$.
\end{enumerate}
\end{corollary}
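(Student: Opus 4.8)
The plan is to prove the six-fold equivalence by establishing a cycle of implications, using the structural results of the previous section as the engine. The cheap part is to note that $(1) \Leftrightarrow (2)$ is a purely graph-theoretic fact: a graph has no connected subgraph on three vertices precisely when no vertex has degree $\geq 2$, i.e.\ every component is a single edge or a single vertex; ruling out isolated vertices (or treating them trivially, since they contribute nothing to $\bp_G$) gives exactly a disjoint union of copies of $K_2$. I would state this and move on quickly.

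Next I would handle the implications among $(1)$, $(3)$, $(4)$, $(5)$, $(6)$. The implication $(1) \Rightarrow (3)$ is immediate: if $G = \bigsqcup K_2$, then $\bp_G$ consists of block-diagonal matrices whose blocks are $1\times1$ or $2\times2$ PSD matrices with nonnegative entries, and every entrywise power $\alpha \geq 0$ preserves PSD-ness of such small blocks (a $2\times 2$ nonnegative PSD matrix $\begin{pmatrix} a & b \\ b & c\end{pmatrix}$ with $b^2 \le ac$ satisfies $(b^2)^\alpha \le (ac)^\alpha = a^\alpha c^\alpha$ for all $\alpha \ge 0$), so $\calh_G = [0,\infty)$. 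The chain $(3) \Rightarrow (4) \Rightarrow (5)$ is trivial, and $(3) \Leftrightarrow (6)$ is essentially the definition of $CE_H$ together with the fact (noted after Definition~\ref{D1}) that $\calh_G$ is closed: $CE_H(G) = 0$ says every $\beta \ge 0$ preserves positivity, i.e.\ $\calh_G \supseteq [0,\infty)$, and since always $\calh_G \subseteq [0,\infty)$ this is $(3)$. Likewise $(6) \Rightarrow (5)$ trivially, so it remains only to close the loop, e.g.\ by showing $(5) \Rightarrow (1)$ or $(4) \Rightarrow (1)$.

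The main obstacle — and the only real content — is the contrapositive: if $G$ is \emph{not} a disjoint union of copies of $K_2$, then $\calh_G \subseteq [1,\infty)$, which kills $(4)$ and $(5)$ simultaneously (and hence $(6)$). By the equivalence $(1)\Leftrightarrow(2)$, such a $G$ contains a connected subgraph $H$ on three vertices; since $H$ is connected on $3$ vertices it is either $P_3$ or $K_3$, and in both cases $P_3 \subseteq H \subseteq G$. By the monotonicity remark after Definition~\ref{D1}, $\calh_G \subseteq \calh_{P_3}$, so it suffices to show $\calh_{P_3} \subseteq [1,\infty)$, i.e.\ no $\alpha \in (0,1)$ preserves $\bp_{P_3}([0,\infty))$. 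This is exactly the computation carried out in the proof of Theorem~\ref{Taddtrees}: for $a \in (0,1)$ the matrix
\[
A(a) := \begin{pmatrix} 1 & a & 0 \\ a & 1 & \sqrt{1-a^2} \\ 0 & \sqrt{1-a^2} & 1 \end{pmatrix} \in \bp_{P_3}([0,\infty))
\]
has $A(a)^{\circ\alpha}$ with determinant $1 - (a^2)^\alpha - (1-a^2)^\alpha < 0$ for $\alpha \in [0,1)$ by strict subadditivity of $x \mapsto x^\alpha$. Thus $\alpha \notin \calh_{P_3}$, so $\alpha \notin \calh_G$, and we have shown $\calh_G \subseteq [1,\infty)$.

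Assembling the pieces: $(1)\Leftrightarrow(2)$ is graph theory; $(1)\Rightarrow(3)\Rightarrow(4)\Rightarrow(5)$ and $(3)\Leftrightarrow(6)$ with $(6)\Rightarrow(5)$ are easy; and the contrapositive argument above gives $(5)\Rightarrow(1)$ (and also $(4)\Rightarrow(1)$), completing the cycle. I expect the write-up to be short, with the $P_3$ counterexample — already available from Theorem~\ref{Taddtrees} — doing all the heavy lifting, so the real task is just organizing the trivial implications cleanly.
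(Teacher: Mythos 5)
Your proposal is correct and follows essentially the same route as the paper: the trivial implications $(1)\Leftrightarrow(2)$, $(1)\Rightarrow(3)\Rightarrow(4)\Rightarrow(5)$, the link to $(6)$ via the definition of $CE_H$, and the contrapositive $(5)\Rightarrow(1)/(2)$ reduced to the $P_3$ counterexample $A(a)$ from the proof of Theorem~\ref{Taddtrees} (the paper cites Theorems~\ref{Tcomplete} and~\ref{Taddtrees} for exactly this). Your explicit $2\times 2$ verification of $(1)\Rightarrow(3)$ and your remark on isolated vertices merely fill in details the paper leaves as ``easy'' or ``obvious.''
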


\begin{proof}
That the first two conditions are equivalent is obvious. That $(1)
\implies (3) \implies (4) \implies (5)$ is also clear. 
Now if (2) fails to hold, then this connected subgraph of $G$ is either
the path graph $P_3$ or the complete graph $K_3$. In both cases, (5) also
fails to hold, by Theorems \ref{Tcomplete} and \ref{Taddtrees}. This
shows that $(5) \implies (2)$, and hence that (1)--(5) are equivalent.
Next, clearly $(6) \implies (3)$. Conversely, if (1) holds then it is
easy to show that $\calh_G = [0,\infty)$,
so that (6) also holds.
\end{proof}

\begin{remark}
We remark that Corollary \ref{CE0} also holds if $\calh_G$ is replaced by
$\calh_G^\psi$ or $\calh_G^\phi$, and $CE_H(G)$ is replaced by the
corresponding critical exponent. The proof is similar. 
\end{remark}

\begin{remark}
Corollary \ref{CE0} shows that for all graphs that are not disjoint
unions of copies of $K_2$, the set of powers preserving positivity are
all contained in $[1,\infty)$. For this reason, and without further
reference, we do not consider non-positive entrywise power functions in
the remainder of the paper. Similarly, the Schur product theorem implies
that $\N \subset \calh_G,\ -1+2\N \subset \calh_G^\psi, \ 2\N \subset
\calh_G^\phi$ for all graphs $G$, and these facts are used below without
further reference.
\end{remark}

\section{Proof of the Main result}\label{Smain}

In this section we develop all the tools that are required to compute the
$\calh$-sets for chordal graphs. We begin by recalling some important
properties of chordal graphs (see e.g.~\cite[Chapter 5.5]{Diestel},
\cite[Chapter 4]{golumbic} for more details). 

Let $G = (V,E)$ be an undirected graph. Given $C \subset V$, denote by
$G_C$ the subgraph of $G$ \textit{induced} by $C$. A \textit{clique} in
$G$ is a complete induced subgraph of $G$. A subset $C \subset V$ is said
to \textit{separate} $A \subset V$ from $B \subset V$ if every path from
a vertex in $A$ to a vertex in $B$ intersects $C$. A partition $(A,C,B)$
of subsets of $V$ is said to be a \textit{decomposition} of $G$ if
\begin{enumerate}
\item $C$ separates $A$ from $B$; and
\item $G_C$ is complete. 
\end{enumerate}

A graph $G$ is said to be \textit{decomposable} if either $G$ is
complete, or if there exists a decomposition $(A,C,B)$ of $G$ such that
$G_{A \cup C}$ and $G_{B \cup C}$ are decomposable.

Let $G$ be a graph and let $B_1, \dots, B_k$ be a sequence of subsets of
vertices of $G$. Define: 
\begin{equation}\label{Ehistories}
H_j := B_1 \cup \dots \cup B_j, \qquad R_j = B_j \setminus H_{j-1},
\qquad S_j = H_{j-1} \cap B_j, \qquad 1 \leq j \leq k, 
\end{equation}
and $H_0 := \emptyset$. The sets $H_j, R_j$, and $S_j$ are respectively called the
\textit{histories, residuals, and separators} of the sequence. The
sequence $B_1, \dots, B_k$ is said to be a \textit{perfect ordering} if: 
\begin{enumerate}
\item For all $1 < i \leq k$, there exists $1 \leq j < i$ such that $S_i
\subset B_j$; and
\item The sets $S_i$ induce complete graphs for all $1 \leq i \leq k$. 
\end{enumerate}

Decompositions and perfect orderings provide important characterizations
of chordal graphs, as summarized in Theorem \ref{Tchordalcarac}. 

\begin{theorem}[{\cite[Propositions 2.5 and
2.17]{lauritzen}}]\label{Tchordalcarac}
Let $G = (V,E)$ be an undirected graph. Then the following are
equivalent: 
\begin{enumerate}
\item $G$ is chordal (i.e., each cycle with $4$ vertices or more in $G$
has a chord).
\item $G$ is decomposable. 
\item The maximal cliques of $G$ admit a perfect ordering.
\end{enumerate}
\end{theorem}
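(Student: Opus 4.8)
The plan is to prove the statement of Theorem~\ref{Tchordalcarac} is equivalent to the characterizations used below by establishing the cycle of implications (1)$\Rightarrow$(2)$\Rightarrow$(3)$\Rightarrow$(1), though since this is quoted from Lauritzen's book, I would in fact invoke it directly; what follows is how one would prove it from scratch. For (1)$\Rightarrow$(2), I would argue by induction on $|V|$. If $G$ is complete we are done. Otherwise, since $G$ is chordal and non-complete, a classical fact (Dirac's lemma: every non-complete chordal graph has at least two non-adjacent simplicial vertices, or more directly, a minimal vertex separator in a chordal graph induces a clique) gives a minimal separator $C$ such that $G_C$ is complete; taking $A$ to be the vertex set of one connected component of $G \setminus C$ and $B$ the rest yields a decomposition $(A,C,B)$. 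The key point is that $G_{A\cup C}$ and $G_{B\cup C}$ are again chordal (induced subgraphs of chordal graphs are chordal) and have strictly fewer vertices, so the induction hypothesis applies.

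For (2)$\Rightarrow$(3), I would again induct, unwinding the recursive definition of decomposability: a decomposition $(A,C,B)$ with $G_{A\cup C}$, $G_{B\cup C}$ decomposable yields perfect orderings of the maximal cliques of each piece by induction, and one concatenates them. The separator sets $S_i$ that arise are each contained in $C$ (hence induce complete graphs since $G_C$ is complete), and the condition that $S_i \subset B_j$ for some earlier $j$ follows because $C$ itself sits inside a maximal clique on the $A\cup C$ side, which is listed first. Some care is needed to handle maximal cliques of $G$ versus maximal cliques of the induced subgraphs $G_{A\cup C}$ and $G_{B\cup C}$: a clique maximal in a piece may fail to be maximal in $G$ only if it is contained in $C$, and one checks such cliques can be pruned without destroying the perfect-ordering axioms.

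For (3)$\Rightarrow$(1), suppose the maximal cliques $B_1,\dots,B_k$ admit a perfect ordering and let $v_1 - v_2 - \cdots - v_m - v_1$ be a cycle with $m \geq 4$; I must produce a chord. Consider the largest index $i$ such that the residual $R_i$ contains a cycle vertex. If two cycle vertices lie in $R_i$, they are both in the clique $B_i$ and either are adjacent on the cycle or give a chord directly. If only one cycle vertex $v_\ell$ lies in $R_i$, then its two cycle-neighbors $v_{\ell-1}, v_{\ell+1}$ lie in $B_i \setminus R_i = S_i \subset B_j$ for some earlier $j$; since $S_i$ induces a complete graph, $v_{\ell-1}$ and $v_{\ell+1}$ are adjacent, giving a chord of the cycle (as $m \geq 4$ these are non-consecutive). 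The main obstacle in the whole argument is the bookkeeping in (3)$\Rightarrow$(1) — correctly locating the relevant clique in the ordering and verifying that the chord produced is genuinely a chord (i.e.\ joins non-adjacent cycle vertices) rather than a cycle edge — together with, in (1)$\Rightarrow$(2), citing or proving the clique-separator lemma for chordal graphs. Since the paper explicitly attributes the result to \cite[Propositions 2.5 and 2.17]{lauritzen}, in the write-up I would simply reference those propositions rather than reproduce these arguments.
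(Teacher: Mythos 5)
The paper offers no proof of this theorem at all---it is quoted directly from Lauritzen (Propositions 2.5 and 2.17), which is exactly what you say you would do in the write-up, so your approach matches the paper's. Your supplementary from-scratch sketch is the standard argument and is essentially sound; the two spots that need a little more care are both in (3)$\Rightarrow$(1): the claim that the cycle-neighbours of a vertex of $R_i$ lying in $H_i$ must land in $B_i$ is not automatic from the definitions but follows from the fact (Lauritzen's Lemma 2.11) that a perfect ordering makes $(H_{i-1}\setminus S_i,\, S_i,\, R_i)$ a decomposition of $G_{H_i}$, and the case of exactly two \emph{consecutive} cycle vertices $v_\ell, v_{\ell+1}$ in $R_i$ is not yet a chord---there you should observe that $v_{\ell-1}, v_{\ell+2}$ then lie in $S_i \subset B_i$, so at least four cycle vertices sit in the clique $B_i$ and a chord such as $(v_\ell, v_{\ell+2})$ results.
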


We now relate the decomposition of a chordal graph $G$ to properties of
functions preserving positivity on $\bp_G$. Given a graph $G$ and a
function $f : \R \to \R$ with $f(0) = 0$, we say that $f[-]$ is
\textit{Loewner super-additive on $\bp_G(\R)$} if $f[A+B] - f[A] - f[B]
\in \bp_G(\R)$ whenever $A,B \in \bp_G(\R)$. Note that this notion
coincides with the usual notion of super-additivity on $[0,\infty)$ when
$G$ has only one vertex.

\begin{theorem}\label{Tchordal}
Let $G = (V,E)$ be a graph with a decomposition $(A,C,B)$. Also let $f:
\R \to \R$.
\begin{enumerate}
\item If $f[-]$ preserves positivity on $\bp_{G_{A \cup C}}$ and on
$\bp_{G_{B \cup C}}$, and is Loewner super-additive on $\bp_{G_C}$, then
$f[-]$ preserves positivity on $\bp_G$. 
\item Conversely, if $f = \psi_\alpha$ or $f= \phi_\alpha$ and $f[-]$
preserves positivity on $\bp_G$, then $f[-]$ is Loewner super-additive on
$\bp_{G_{C'}}$ for every clique $C' \subset C$ for which there exist
vertices $v_1 \in A, v_2 \in B$ that are adjacent to every $v \in C'$.
\end{enumerate}

\noindent In particular, when $f = \psi_\alpha$ or $f= \phi_\alpha$ and
$|C| = 1$, $f[-]$ preserves positivity on $\bp_G$, if and only if $f[-]$
preserves positivity on $\bp_{G_{A \cup C}}$ and $\bp_{G_{B \cup C}}$ and
is Loewner super-additive on $[0,\infty)$.
\end{theorem}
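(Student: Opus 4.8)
The plan is to prove the two implications of Theorem~\ref{Tchordal} by reducing everything to block-matrix manipulations that are compatible with the zero-pattern dictated by the graph, being careful that the Schur complement operation—which is the natural tool here—preserves exactly the right sparsity structure because $C$ separates $A$ from $B$ and $G_C$ is complete.

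For part (1), I would take $M \in \bp_G$ and write it in block form according to the partition $(A,C,B)$; since $C$ separates $A$ from $B$, the $A$--$B$ block of $M$ vanishes, so $M$ has an ``arrow'' (bordered) shape with the $C$-block in the middle. If the $C$-block $M_C$ is invertible (the singular case is handled by a perturbation/limiting argument, using that $\bp_G$ is closed, as noted after Definition~\ref{D1}), then $M$ is positive semidefinite iff $M_C \geq 0$ and the Schur complement $M/M_C \in \bp_{G_{A} \sqcup G_{B}}$ is positive semidefinite, i.e. iff the $A$-block of $M$ minus its correction and the $B$-block of $M$ minus its correction are each PSD and have the right zero pattern (the cross terms stay zero since they did in $M$). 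The goal is to show $f[M] \in \bp_G$. The key computation: $f[M]$ again has the arrow shape with middle block $f[M_C]$, and one shows
\[
f[M]/f[M_C] \;\geq\; \big(f[M_{A\cup C}]/f[M_C]\big) \oplus \big(f[M_{B \cup C}]/f[M_C]\big)
\]
in the Loewner order, where the right-hand side is PSD by the hypothesis that $f[-]$ preserves positivity on $\bp_{G_{A\cup C}}$ and $\bp_{G_{B \cup C}}$ (so those two Schur complements are PSD) — wait, one must instead compare to $f[M_{A\cup C}/M_C] \oplus f[M_{B\cup C}/M_C]$, which is where Loewner super-additivity on $\bp_{G_C}$ enters: the difference between $f$ applied to a Schur complement and the Schur complement of $f[-]$ is controlled by super-additivity of $f$. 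Concretely, writing the $A$-corner contribution, one needs $f[X + Y] - f[X] - f[Y] \in \bp_{G_C}$ with $X,Y$ the relevant PSD matrices supported on $G_C$; iterating the one-variable identity from the proof of Theorem~\ref{Taddtrees} (the scalar computation $a^\alpha - b^\alpha - (a-b)^\alpha \geq 0$, now in matrix form) gives the claim. Assembling these pieces shows $f[M] \in \bp_n$, and it lies in $\bp_G$ because $f(0)=0$ kills all the non-edges.

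For part (2), I would argue contrapositively in spirit: given a clique $C' \subset C$ and vertices $v_1 \in A$, $v_2 \in B$ each adjacent to all of $C'$, I want to manufacture, from any failure of Loewner super-additivity of $f[-]$ on $\bp_{G_{C'}}$, a matrix in $\bp_G$ whose image under $f[-]$ is not PSD. Suppose $P, Q \in \bp_{G_{C'}}$ with $f[P+Q] - f[P] - f[Q] \notin \bp_{C'}$; certify this with a vector $w$ giving $w^T(f[P+Q]-f[P]-f[Q])w < 0$. Now build a matrix on $\{v_1\} \cup C' \cup \{v_2\}$ (all other vertices set to zero rows/columns, which is legitimate since $v_1,v_2$ are adjacent to all of $C'$ and $C'$ is a clique, so this is an induced subgraph of $G$ sitting inside $\bp_G$): put $P+Q$ as the $C'$-block, arrange the $v_1$-row and $v_2$-row so that taking the Schur complement with respect to the (positive) diagonal entries at $v_1$ and $v_2$ subtracts off rank-one pieces that realize $P$ and $Q$ respectively — this is exactly the standard trick of encoding ``$P+Q$ upstairs, $P+Q-P-Q$ after two Schur complements'' (one bordering for each of $P$ and $Q$). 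Because $f = \psi_\alpha$ or $\phi_\alpha$ behaves well under Schur complements of this bordered form (the scalar identity $f(t)/f(t) $ type relations, plus $f$ of a rank-one positive semidefinite border equals the rank-one border of $f$ of the entries up to the scalar $f$ of the pivot), the matrix $f[-]$ applied to this configuration, after the two Schur complements, reduces to $f[P+Q] - f[P] - f[Q]$ on the $C'$-block, which is not PSD — contradicting that $f[-]$ preserves positivity on $\bp_G$. One must check the bordered matrix one starts with is genuinely in $\bp_G$ (choose the pivots large enough, or use a completion argument), and that the zero pattern of $\bp_{G_{C'}}$ is respected throughout; here $C'$ being complete trivializes the pattern constraint on the $C'$-block.

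Finally, the ``in particular'' statement is the special case $|C| = 1$: then $G_C$ is a single vertex, Loewner super-additivity on $\bp_{G_C}$ is literally ordinary super-additivity on $[0,\infty)$, and the only clique $C' \subset C$ is $C$ itself, for which suitable $v_1 \in A, v_2 \in B$ exist precisely when... — actually when $|C|=1$ every vertex of $C$ automatically lies in some edge to $A$ and to $B$ as long as $A,B$ are nonempty, which holds in a genuine decomposition; so parts (1) and (2) combine to give the stated equivalence. The main obstacle I anticipate is part (2): constructing the bordered matrix so that \emph{two} successive Schur complements cleanly peel off $f[P]$ and $f[Q]$ — the interaction of the two rank-one borders and the commutation of $f[-]$ with this double Schur-complement operation requires the precise form of $\psi_\alpha$, $\phi_\alpha$ and a careful choice of the entries in the $v_1$- and $v_2$-rows; getting the signs and the pivot sizes right, while keeping the whole matrix in $\bp_G$, is the delicate part.
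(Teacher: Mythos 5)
Your part (1) runs into exactly the obstruction the paper is organized around: Schur complements do not respect the sparsity pattern. If you form $f[M]/f[M_C]$, its $A\times B$ block is $-f[M_{AC}]\,f[M_C]^{-1}\,f[M_{CB}]$, which is not zero in general, so the left-hand side of your proposed Loewner inequality is not block-diagonal and the inequality is neither established nor obviously true. Worse, even your corrected right-hand side $f[M_{A\cup C}/M_C]\oplus f[M_{B\cup C}/M_C]$ is not covered by the hypotheses: the Schur complement $M_{A\cup C}/M_C$ generally fills in zeros and leaves $\bp_{G_{A\cup C}}$, whereas $f[-]$ is only assumed to preserve positivity on that sparse cone, not on all of $\bp_{|A\cup C|}$. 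The paper avoids both problems with an \emph{additive} decomposition (Lemma \ref{Ldecomp_chord}): after perturbing to $M+\epsilon\Id$, one writes the matrix as $M_1+M_2$ with $M_1\in\bp_{G_{A\cup C}}$ and $M_2\in\bp_{G_{B\cup C}}$ exactly, applies the hypothesis to each summand, and observes that $f[M_1+M_2]-f[M_1]-f[M_2]$ is supported on the $C\times C$ block (since $f(0)=0$ and the summands overlap only on $C$), where Loewner super-additivity on $\bp_{G_C}$ applies; continuity of $f$ (which follows from preservation of positivity on $\bp_2$) then removes the $\epsilon$. You would need to replace your Schur-complement step by such a decomposition for the hypotheses to be usable at all.

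Part (2) has a rank obstruction you have not addressed. A single bordering vertex $v_1$ can only peel off a rank-one correction $a^{-1}\mathbf{u}\mathbf{u}^T$ via a Schur complement, so your construction can only ``realize'' $P$ and $Q$ when they have rank one; for general $P,Q\in\bp_m$ of higher rank there is no way to encode them with one row at $v_1$ and one row at $v_2$ while staying in $\bp_G$. The paper's proof accordingly derives super-additivity only on rank-one matrices, from the matrix $W(\mathbf{u},\mathbf{v},\mathbf{u}\mathbf{u}^T+\mathbf{v}\mathbf{v}^T)$ of \eqref{Enk1} (which is automatically positive semidefinite as a sum of two rank-one Gram matrices, so no ``large pivot'' or completion argument is needed), and then invokes Theorem \ref{Tsuperadd}: for $\psi_\alpha$ and $\phi_\alpha$, Loewner super-additivity on rank-one matrices already forces Loewner super-additivity on all of $\bp_m$. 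That classification step is where the hypothesis $f=\psi_\alpha$ or $\phi_\alpha$ is genuinely used, and it is the missing ingredient in your argument; without it the conclusion of part (2) does not follow from the bordered-matrix computation.
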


\noindent Theorem \ref{Tchordal} immediately implies that if a
superadditive function preserves positivity on $\bp_2$, then it does so
on $\bp_G$ for all trees $G$. The result thus extends \cite[Theorem
A]{Guillot_Khare_Rajaratnam2012}. (See \cite[Theorem
2.6]{Guillot_Khare_Rajaratnam2012} for a characterization of entrywise
functions preserving positivity on $\bp_2$.)

The proof of Theorem \ref{Tchordal} requires some preliminary results. We
first recall previous work on Loewner superadditive functions. The powers
that are Loewner superadditive on $\bp_n(\R) = \bp_{K_n}(\R)$ have been
classified in \cite{GKR-crit-2sided}.

\begin{theorem}[{Guillot, Khare, and Rajaratnam \cite[Theorem
5.1]{GKR-crit-2sided}}]\label{Tsuperadd}
Given an integer $n \geq 2$, the sets of entrywise power functions
$x^\alpha, \psi_\alpha, \phi_\alpha$ (with $\alpha \in \R$) which are
Loewner super-additive maps on $\bp_n$ are, respectively,
\[ \N \cup [n, \infty), \qquad
(-1+2\N) \cup [n, \infty), \qquad
2\N \cup [n, \infty). \]

\noindent Moreover, for all $\alpha \in (0,n) \setminus \N$, there exist
${\bf u}, {\bf v} \in [0,\infty)^n$ such that $({\bf u} {\bf u}^T + {\bf
v} {\bf v}^T)^{\circ \alpha} \not\in \bp_n$.
Similarly, if $f \equiv \psi_\alpha$ with $\alpha = 2 k$ for $1 \leq k
\leq \lceil n/2 \rceil - 1$, or $f \equiv \phi_\alpha$ with $\alpha = 2
k-1$ for $1 \leq k \leq \lfloor n/2 \rfloor$, then there exist
${\bf u}, {\bf v} \in \R^n$ such that $f[{\bf u} {\bf u}^T + {\bf v} {\bf
v}^T] \not\in \bp_n$.
\end{theorem}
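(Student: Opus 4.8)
The plan is to prove the two directions separately: \emph{membership} (every $\alpha$ in the listed sets gives a Loewner super-additive map) and \emph{sharpness} (no other $\alpha$ does, witnessed by explicit low-rank matrices, which is also the content of the ``Moreover'' clauses). Throughout I would read the super-additivity failure as the assertion that the difference $f[A+B]-f[A]-f[B]$ leaves $\bp_n$; for the rank-one choices $A=uu^T,\ B=vv^T$ this difference is the natural object, since $f[uu^T]$ and $f[vv^T]$ are themselves rank-one and positive semidefinite.

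For membership I would handle the integer powers and the tail $[n,\infty)$ by different mechanisms. For a positive integer $k$ the entrywise binomial expansion gives $(A+B)^{\circ k}-A^{\circ k}-B^{\circ k}=\sum_{j=1}^{k-1}\binom{k}{j}A^{\circ j}\circ B^{\circ(k-j)}$, and each summand is a Hadamard product of positive semidefinite matrices, hence in $\bp_n$ by the Schur product theorem; since $\psi_\alpha$ (resp.\ $\phi_\alpha$) coincides with $x^\alpha$ on all of $\R$ when $\alpha$ is an odd (resp.\ even) positive integer, the same identity disposes of $-1+2\N$ and $2\N$. For the tail I would reduce super-additivity to positivity preservation through a double-integral identity: writing $g$ for the appropriate ``second-derivative'' power ($x^{\alpha-2}$, $\psi_{\alpha-2}$, or $\phi_{\alpha-2}$) and using $\frac{d}{dx}\psi_\alpha=\alpha\phi_{\alpha-1}$, $\frac{d}{dx}\phi_\alpha=\alpha\psi_{\alpha-1}$, two applications of the fundamental theorem of calculus (in the $A$- then the $B$-direction) yield
\[ f[A+B]-f[A]-f[B]=\alpha(\alpha-1)\int_0^1\!\!\int_0^1 g[sA+tB]\circ A\circ B\,ds\,dt. \]
For $\alpha\ge n$ we have $\alpha-2\ge n-2$, so $g[sA+tB]\in\bp_n$ by the positivity-preservation thresholds recorded in Theorem \ref{Tcomplete}; as $A,B\in\bp_n$ the integrand is a triple Hadamard product of positive semidefinite matrices, and the integral stays in $\bp_n$. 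I would first prove the identity on matrices all of whose entries are nonzero, where the entrywise derivatives are unambiguous, and then pass to the boundary by continuity, using that $\bp_n$ is closed.

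For sharpness in the non-integer case I would exhibit the witness $u=\mathbf 1$, $v=(x_1,\dots,x_m)$ with $0<x_1<\cdots<x_m$ small, in dimension $m=\lceil\alpha\rceil$ (and $m=2$ when $\alpha\in(0,1)$). Expanding $(1+x_ix_j)^\alpha=\sum_{k\ge0}\binom{\alpha}{k}(x_ix_j)^k$ shows the difference matrix is $D=\sum_{k\ge1}\binom{\alpha}{k}w_kw_k^T-w_\alpha w_\alpha^T$ with $w_e=(x_i^e)_i$, so a Cauchy--Binet expansion over exponent sets gives
\[ \det D=\sum_{|T|=m}\Big(\textstyle\prod_{e\in T}c_e\Big)\det\big(x_i^{e_j}\big)^2,\qquad c_k=\tbinom{\alpha}{k}\ (k\ge1),\ \ c_\alpha=-1. \]
Since generalized Vandermonde determinants $\det(x_i^{e_j})$ are positive for increasing positive nodes and exponents, the sign of $\det D$ for small $x_i$ is that of its lowest-degree term, which corresponds to the $m$ smallest exponents $\{1,\dots,m-1,\alpha\}$ and has coefficient $-\prod_{k=1}^{m-1}\binom{\alpha}{k}<0$. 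Hence $\det D<0$, so $D\notin\bp_m$; zero-padding embeds this $m\times m$ failure into $\bp_n$, and the same $u,v\ge0$ settle $\psi_\alpha,\phi_\alpha$ because these agree with $x^\alpha$ on nonnegative entries.

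It remains to rule out the even integers for $\psi_\alpha$ and the odd integers for $\phi_\alpha$ below the threshold, and here the nonnegative witnesses are useless: on the nonnegative cone $\psi_{2k}$ and $\phi_{2k-1}$ are indistinguishable from the super-additive powers $x^{2k}$ and $x^{2k-1}$. For these I would use genuinely signed vectors $u,v\in\R^n$ with a prescribed sign pattern, so that $\psi_{2k}$ (an odd function) acts on $A+B$ differently from the polynomial $x^{2k}$, and then track signs through the analogue of the determinant expansion above to extract a negative direction; the admissible ranges $1\le k\le\lceil n/2\rceil-1$ and $1\le k\le\lfloor n/2\rfloor$ are precisely the dimensions in which the required sign pattern fits. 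I expect this signed, parity-sensitive construction to be the main obstacle: it no longer reduces to a single generalized-Vandermonde sign computation, and obtaining the \emph{sharp} range of $k$ (rather than merely some failing $k$) will require careful choice of the sign pattern and of the perturbation scale.
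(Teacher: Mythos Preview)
The paper does not prove Theorem~\ref{Tsuperadd}: it is quoted from \cite[Theorem~5.1]{GKR-crit-2sided} and used as a black box, so there is no in-paper proof to compare your attempt against. That said, a few remarks on the attempt itself.

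Your membership argument is correct and is essentially the one in the cited source: the binomial identity handles the integer powers, and the double-integral representation
\[
f[A+B]-f[A]-f[B]=\alpha(\alpha-1)\int_0^1\!\!\int_0^1 g[sA+tB]\circ A\circ B\,ds\,dt
\]
reduces super-additivity at exponent $\alpha\ge n$ to positivity preservation at exponent $\alpha-2\ge n-2$, which is exactly Theorem~\ref{Tcomplete}.

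There is a genuine misreading in your sharpness step. The ``Moreover'' clause does \emph{not} say that the difference $D=f[uu^T+vv^T]-f[uu^T]-f[vv^T]$ leaves $\bp_n$; it makes the strictly stronger assertion that $(uu^T+vv^T)^{\circ\alpha}$ itself fails to be positive semidefinite. Your Cauchy--Binet computation is aimed at $\det D$, and $\det D<0$ does not by itself force $(uu^T+vv^T)^{\circ\alpha}\notin\bp_n$. This distinction matters for how the result is used later: in the proof of Theorem~\ref{Tchordal} (see \eqref{Esuper_rank1} and the sentence after it) one needs precisely that failure on rank-two inputs already detects every non-admissible $\alpha$, which is what the stronger clause provides. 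Your determinant idea can be redirected to the full matrix $\sum_{k\ge0}\binom{\alpha}{k}w_kw_k^T$ by choosing the ambient dimension so that the lowest-degree exponent set picks up a negative binomial coefficient, but as written you have only established the weaker difference statement, which suffices for the characterisation of super-additive powers but not for the ``Moreover'' sentence. Finally, as you acknowledge, the signed integer exclusions for $\psi_\alpha$ and $\phi_\alpha$ remain a plan rather than a proof; the sharp ranges $1\le k\le\lceil n/2\rceil-1$ and $1\le k\le\lfloor n/2\rfloor$ require an explicit construction that you have not supplied.
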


The following corollary is an immediate consequence of Theorem
\ref{Tchordal} and Theorem \ref{Tsuperadd}.

\begin{corollary}\label{Cchordal}
Let $G = (V,E)$ be a graph with a decomposition $(A,C,B)$. Suppose there
exist vertices $v_1 \in A$ and $v_2 \in B$ that are adjacent to every $v
\in C$. Let $f = \psi_\alpha$ or $f = \phi_\alpha$ for some $\alpha \in
\R$. Then $f[-]$ preserves positivity on $\bp_G$ if and only if either
\begin{enumerate}
\item $\alpha \in -1+2\N$ if $f = \psi_\alpha$ or $\alpha \in 2\N$ if $f
= \phi_\alpha$, or 
\item $f[-]$ preserves positivity on $\bp_{G_{A \cup C}}$ and $\bp_{G_{B
\cup C}}$ and $|\alpha| \geq |C|$.
\end{enumerate}
\end{corollary}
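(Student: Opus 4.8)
The plan is to obtain both implications by combining Theorem~\ref{Tchordal}, which links decompositions of $G$ to positivity preservation, with Theorem~\ref{Tsuperadd}, which classifies the Loewner super-additive powers. Two preliminary observations set things up. First, by Corollary~\ref{CE0} and the remark following it, it suffices to treat $\alpha > 0$, so that $|\alpha| = \alpha$ throughout. Second, since $(A,C,B)$ is a decomposition, $G_C$ is complete on $|C|$ vertices; hence $\bp_{G_C} = \bp_{|C|}$, and ``$f[-]$ is Loewner super-additive on $\bp_{G_C}$'' is exactly the hypothesis appearing in Theorem~\ref{Tsuperadd}.

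For the backward implication, in case~(1) --- $\alpha \in -1+2\N$ with $f = \psi_\alpha$, or $\alpha \in 2\N$ with $f = \phi_\alpha$ --- the map $f[-]$ coincides with the entrywise integer Schur power $A \mapsto A^{\circ \alpha}$ on all real symmetric matrices, so $f[A]$ is positive semidefinite with the same zero pattern as $A$ by the Schur product theorem; thus $f[-]$ preserves positivity on $\bp_G$ (indeed on $\bp_{G'}$ for every graph $G'$). In case~(2), the bound $\alpha \geq |C|$ makes $f[-]$ Loewner super-additive on $\bp_{|C|} = \bp_{G_C}$ by Theorem~\ref{Tsuperadd}; since $f[-]$ also preserves positivity on $\bp_{G_{A\cup C}}$ and $\bp_{G_{B\cup C}}$ by hypothesis, Theorem~\ref{Tchordal}(1) gives positivity preservation on $\bp_G$.

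For the forward implication, suppose $f[-]$ preserves positivity on $\bp_G$ and that~(1) fails. Restricting matrices to the induced subgraphs $G_{A\cup C}$ and $G_{B\cup C}$ (equivalently, using $\calh_G \subseteq \calh_H$ for a subgraph $H$, and the analogous containments for $\calh^\psi$ and $\calh^\phi$) shows that $f[-]$ preserves positivity on $\bp_{G_{A\cup C}}$ and $\bp_{G_{B\cup C}}$. By hypothesis there are $v_1 \in A$ and $v_2 \in B$ adjacent to every vertex of the clique $C$, so Theorem~\ref{Tchordal}(2), applied with $C' = C$, shows $f[-]$ is Loewner super-additive on $\bp_{G_C} = \bp_{|C|}$. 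Theorem~\ref{Tsuperadd} then forces $\alpha \in (-1+2\N)\cup[|C|,\infty)$ if $f=\psi_\alpha$ (resp.\ $\alpha \in 2\N\cup[|C|,\infty)$ if $f=\phi_\alpha$); as~(1) is assumed to fail, we conclude $\alpha \geq |C|$, i.e.\ $|\alpha|\geq |C|$, so~(2) holds.

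I do not expect a genuine obstacle: the content is entirely contained in Theorems~\ref{Tchordal} and~\ref{Tsuperadd}, and the corollary simply threads them together. The points needing mild care are the small separators $|C| \leq 1$ --- for $|C| = 0$ the graph splits as a disjoint union and the super-additivity condition is vacuous, while for $|C| = 1$ one should quote the ``in particular'' clause of Theorem~\ref{Tchordal} (super-additivity on $[0,\infty)$, which is equivalent to $\alpha \geq 1$) since Theorem~\ref{Tsuperadd} is phrased for $n \geq 2$ --- together with keeping the parity bookkeeping consistent, pairing $\psi_\alpha$ with $-1+2\N$ and $\phi_\alpha$ with $2\N$, and recalling that for a positive odd (resp.\ even) integer $\alpha$ the map $\psi_\alpha$ (resp.\ $\phi_\alpha$) is literally the entrywise $\alpha$-th Schur power.
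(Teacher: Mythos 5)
Your proposal is correct and follows exactly the route the paper intends: the paper gives no separate proof, stating that Corollary~\ref{Cchordal} is an immediate consequence of Theorem~\ref{Tchordal} and Theorem~\ref{Tsuperadd}, and your argument is precisely that derivation (both directions, with the right parity bookkeeping and the degenerate cases $|C|\leq 1$ handled sensibly).
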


Lemma \ref{Ldecomp_chord} below provides an important decomposition that
will be crucial in the proof of Theorem \ref{Tchordal}. In the statement
of the result and the remainder of the paper, we adopt the following
convention to simplify notation: given a graph $G$ and an induced
subgraph $G'$, we identify $\bp_{G'}(I)$ with a subset of $\bp_G(I)$ when
convenient, via the assignment $M \ \mapsto \ M \ \oplus \ {\bf 0}_{(V(G)
\setminus V(G')) \times (V(G) \setminus V(G'))}$.

\begin{lemma}\label{Ldecomp_chord}
Let $G = (V,E)$ be a graph with a decomposition $(A,C,B)$ of $V$, and let
$M$ be a symmetric matrix. Assume the principal submatrices $M_{AA}$ and
$M_{BB}$ of $M$ are invertible. Then the following are equivalent: 
\begin{enumerate}
\item $M \in \bp_G$.
\item $M = M_1 + M_2$ for some matrices $M_1 \in \bp_{G_{A \cup C}}$ and
$M_2 \in \bp_{G_{B \cup C}}$. 
\end{enumerate} 
\end{lemma}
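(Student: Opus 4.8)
The implication $(2) \Rightarrow (1)$ is immediate: if $M = M_1 + M_2$ with $M_1 \in \bp_{G_{A \cup C}}$ and $M_2 \in \bp_{G_{B \cup C}}$, then $M \in \bp_n$ as a sum of positive semidefinite matrices, and the structure of zeros is preserved because each off-diagonal entry $m_{ij}$ with $(i,j) \notin E$ is a sum of two entries that both vanish: if $i \in A, j \in B$ then the $(i,j)$ entry of $M_1$ vanishes (since $i,j$ are not both in $A \cup C$) and likewise for $M_2$; and for $(i,j) \notin E$ with both endpoints on the same side the two summands lie in $\bp_{G_{A\cup C}}$ or $\bp_{G_{B \cup C}}$, which already have the required zeros. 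The content of the lemma is the forward direction.

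For $(1) \Rightarrow (2)$, the plan is to write $M$ in block form with respect to the ordered partition $(A, C, B)$, so that the $A$--$B$ and $B$--$A$ blocks are identically zero (since $C$ separates $A$ from $B$, there are no edges between $A$ and $B$):
\[
M = \begin{pmatrix} M_{AA} & M_{AC} & 0 \\ M_{AC}^T & M_{CC} & M_{CB} \\ 0 & M_{CB}^T & M_{BB} \end{pmatrix}.
\]
Since $M \in \bp_G \subset \bp_n$ and $M_{AA}$ is invertible, $M_{AA}$ is in fact positive definite, and the Schur complement $M / M_{AA}$ is positive semidefinite; likewise $M_{BB}$ is positive definite. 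The idea is to split the $C$--$C$ block as $M_{CC} = X + Y$ where $X$ absorbs ``the $A$-side'' and $Y$ absorbs ``the $B$-side.'' Concretely, set $X := M_{AC}^T M_{AA}^{-1} M_{AC} + N$ and $Y := M_{CC} - X$ for a suitable correction $N \geq 0$ to be chosen, and define
\[
M_1 := \begin{pmatrix} M_{AA} & M_{AC} & 0 \\ M_{AC}^T & X & 0 \\ 0 & 0 & 0 \end{pmatrix}, \qquad
M_2 := \begin{pmatrix} 0 & 0 & 0 \\ 0 & Y & M_{CB} \\ 0 & M_{CB}^T & M_{BB} \end{pmatrix},
\]
so that $M_1 + M_2 = M$ automatically, $M_1$ has the zero pattern of $G_{A \cup C}$, and $M_2$ has that of $G_{B \cup C}$ (note $G_C$ is complete, so no zeros are required inside the $C$--$C$ block). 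It remains to choose $N \geq 0$ so that both $M_1 \geq 0$ and $M_2 \geq 0$. By the Schur complement criterion, $M_1 \geq 0$ iff $X \geq M_{AC}^T M_{AA}^{-1} M_{AC}$, i.e.\ iff $N \geq 0$; and $M_2 \geq 0$ iff $Y \geq M_{CB} M_{BB}^{-1} M_{CB}^T$, i.e.\ iff $M_{CC} - M_{AC}^T M_{AA}^{-1} M_{AC} - M_{CB} M_{BB}^{-1} M_{CB}^T \geq N$. So a valid $N$ exists precisely when
\[
M_{CC} - M_{AC}^T M_{AA}^{-1} M_{AC} - M_{CB} M_{BB}^{-1} M_{CB}^T \; \geq \; 0,
\]
and one then simply takes $N$ to be this matrix (or $0$).

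The main obstacle — and the one remaining step — is proving exactly this last inequality, namely that the displayed Schur-type expression is positive semidefinite. The plan here is to read it off from positivity of the full matrix $M$: the Schur complement $M / M_{AA}$ is PSD and equals $\begin{pmatrix} M_{CC} - M_{AC}^T M_{AA}^{-1} M_{AC} & M_{CB} \\ M_{CB}^T & M_{BB} \end{pmatrix}$ (the $A$--$B$ zero blocks make the off-diagonal part of the Schur complement come out cleanly as $M_{CB}$, with no cross term). Now take the Schur complement of the invertible block $M_{BB}$ inside this $2\times 2$ block PSD matrix; positivity is preserved, and the resulting $C$--$C$ block is precisely $M_{CC} - M_{AC}^T M_{AA}^{-1} M_{AC} - M_{CB} M_{BB}^{-1} M_{CB}^T \geq 0$, as needed. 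I would be mildly careful to note that iterated Schur complements commute appropriately here because the relevant blocks are disjoint, but this is routine. This completes the construction and hence the proof.
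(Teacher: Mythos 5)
Your proof is correct and follows essentially the same route as the paper: you use the identical splitting $M = M_1 + M_2$ with the $C$--$C$ block divided as $M_{AC}^T M_{AA}^{-1} M_{AC}$ and $M_{CC} - M_{AC}^T M_{AA}^{-1} M_{AC}$ (your correction $N$ can simply be taken to be $0$), and everything reduces to showing $S = M_{CC} - M_{AC}^T M_{AA}^{-1} M_{AC} - M_{CB} M_{BB}^{-1} M_{CB}^T \geq 0$. The only cosmetic difference is that the paper deduces $S \geq 0$ from an explicit congruence factorization of $M$ whose middle factor is $\mathrm{diag}(M_{AA}^{-1}, S, M_{BB}^{-1})$, whereas you obtain it by two successive Schur complements; these are the same computation.
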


\begin{proof}
Clearly $(2) \implies (1)$. Now let $M \in \bp_G$. The matrix $M$ can be
written in block form as
\begin{equation*}
M = \begin{pmatrix}
M_{AA} & M_{AC} & 0 \\
M_{AC}^T & M_{CC} & M_{CB} \\
0 & M_{CB}^T & M_{BB}
\end{pmatrix}.
\end{equation*}

It is not
difficult to verify that
\begin{equation}\label{Emiracle_decomposition}
M = 
\begin{pmatrix}
M_{AA} & 0 & 0 \\
M_{AC}^T & \Id_{|C|} & M_{CB} \\
0 & 0 & M_{BB}
\end{pmatrix}
\begin{pmatrix}
M_{AA}^{-1} & 0 & 0 \\
0 & S & 0\\
0 & 0 & M_{BB}^{-1}
\end{pmatrix}
\begin{pmatrix}
M_{AA} & 0 & 0 \\
M_{AC}^T & \Id_{|C|} & M_{CB} \\
0 & 0 & M_{BB}
\end{pmatrix}^T, 
\end{equation}
where $\Id_k$ denotes the $k \times k$ identity matrix, and
$S := M_{CC} - M_{AC}^T M_{AA}^{-1} M_{AC} - M_{CB} M_{BB}^{-1}
M_{CB}^T$. It follows that $S$ is positive semidefinite. Now let 
\begin{equation*}
M_1 := \begin{pmatrix}
M_{AA} & M_{AC} & 0 \\
M_{AC}^T & M_{AC}^T M_{AA}^{-1} M_{AC} & 0 \\
0 & 0 & 0
\end{pmatrix}, \qquad M_2 := \begin{pmatrix}
0 & 0 & 0 \\
0 & M_{CC} - M_{AC}^T M_{AA}^{-1} M_{AC} & M_{CB} \\
0 & M_{CB}^T & M_{BB}
\end{pmatrix}.
\end{equation*}

\noindent Clearly, $M = M_1 + M_2$. Computing the Schur complement of
$M_{AA}$ in the upper left $2 \times 2$ blocks of $M_1$, we conclude 
 that $M_1 \in \bp_{G_{A \cup C}}$. Similarly, the Schur complement of
the lower right $2 \times 2$ blocks of $M_2$ is equal to $S$ and
therefore $M_2 \in \bp_{G_{B \cup C}}$. This proves the desired
decomposition of $M$.
\end{proof}

Using the above results, we now prove Theorem \ref{Tchordal}.

\begin{proof}[Proof of Theorem \ref{Tchordal}]
Suppose $f$ satisfies the conditions in (1), and $M \in \bp_G$. Then, in
particular, $f$ preserves positivity on $\bp_{K_2}(\R) = \bp_2(\R)$,
whence $f$ is continuous on $(0,\infty)$ by \cite[Theorem 1.2]{Horn}.
Moreover, $f$ is right-continuous at $0$ as shown at the beginning of the
proof of \cite[Theorem C]{Guillot_Khare_Rajaratnam2012}. Also note that
$f(0) = 0$ because $f[-]$ is super-additive on $[0,\infty) = \bp_1
\subset \bp_{G_C}$ and preserves positivity on $\bp_1 \subset \bp_{G_{A
\cup C}}$.
Now given $\epsilon > 0$, let $\widetilde{M}_\epsilon := M + \epsilon
\cdot \Id_{|G|}$.
By Lemma \ref{Ldecomp_chord}, $\widetilde{M}_\epsilon = M_1 + M_2$ with
$M_1 \in \bp_{G_{A \cup C}}$ and $M_2 \in \bp_{G_{B \cup C}}$. By
assumption $f[M_1]$ and $f[M_2]$ are positive semidefinite.
Moreover, $D := f[\widetilde{M}_\epsilon] - f[M_1] - f[M_2]$ belongs to
$\bp_{G_C}$ by the assumption of superadditivity on $f$ on $\bp_{G_C}$.
It follows that $f[\widetilde{M}_\epsilon] = f[M_1] + f[M_2] + D$ is
positive semidefinite for every $\epsilon > 0$. We conclude by continuity
that $f[M]$ is positive semidefinite, proving (2).

Next, suppose $f = \psi_\alpha$ or $\phi_\alpha$ for $\alpha \in \R$, and
$f[-]$ preserves positivity on $\bp_G$. Then clearly $f[-]$ preserves
positivity on $\bp_{G_{A \cup S}}$ and $\bp_{G_{B \cup S}}$. Moreover,
suppose there exist $v_1 \in A$, $v_2 \in B$, and a clique $C' \subset C$
of size $m$ such that $v_1$ and $v_2$ are adjacent to every vertex in
$C'$. Assume, without loss of generality, that the vertices of $G$ as
labelled in the following order: $v_1$, the $m$ vertices in $C'$, $v_2$,
and the remaining vertices of $G$. Now given vectors ${\bf u}, {\bf v}
\in \R^m$ and a $m \times m$ symmetric matrix $M$, define the matrix
\begin{equation}\label{Enk1}
W({\bf u}, {\bf v},M) := \begin{pmatrix} 1 & {\bf u}^T & 0\\ {\bf u} & M
& {\bf v}\\ 0 & {\bf v}^T & 1 \end{pmatrix}.
\end{equation}

\noindent Then $W({\bf u}, {\bf v}, {\bf u} {\bf u}^T + {\bf v} {\bf
v}^T) \oplus {\bf 0}_{|V| - (m+2)} \in \bp_G(\R)$, so by the
assumptions on $f$, we conclude that $f[W({\bf u}, {\bf v}, {\bf u} {\bf
u}^T + {\bf v} {\bf v}^T)] = W(f[{\bf u}], f[{\bf v}], f[{\bf u} {\bf
u}^T + {\bf v} {\bf v}^T]) \in \bp_{m+2}(\R)$. 
%
 Now using the same decomposition as in
\eqref{Emiracle_decomposition}, we conclude that 
\begin{equation}\label{Esuper_rank1}
f[{\bf u} {\bf u}^T + {\bf v} {\bf v}^T] - f[{\bf u}]f[{\bf u}^T] -
f[{\bf v}]f[{\bf v}^T] = f[{\bf u} {\bf u}^T + {\bf v} {\bf v}^T] -
f[{\bf uu^T}] - f[{\bf vv}^T] \geq 0.
\end{equation}

\noindent Thus $f = \psi_\alpha, \phi_\alpha$ is Loewner super-additive
on rank one matrices in $\bp_m$. By Theorem \ref{Tsuperadd}, the Loewner
super-additive powers preserving positivity on rank $1$ matrices are the
same as the Loewner super-additive powers. We therefore conclude that $f$
is Loewner super-additive on all of $\bp_m$.
\end{proof}

We now have all the ingredients to prove the main result of the paper.

\begin{proof}[Proof of Theorem \ref{Tmain}]
Before proving the result for all chordal graphs, let us prove it for the
``nearly complete'' graphs $K_r^{(1)}$. The result is obvious for $r=2$.
Now suppose $r \geq 3$. First note that $K_{r-1} \subset K_r^{(1)}
\subset K_r$, so
\[ 2\N \cup [r-2,\infty) = \calh^\phi_{K_r} \subset
\calh^\phi_{K_r^{(1)}} \subset \calh^\phi_{K_{r-1}} = 2\N \cup
[r-3,\infty). \]

\noindent Similarly, we have $(-1 + 2\N) \cup [r-2,\infty) \subset
\calh^\psi_{K_r^{(1)}} \subset (-1+2\N) \cup [r-3,\infty)$. Now label the
vertices from $1$ to $r$ such that $(1,r) \not\in E(K_r^{(1)})$, and
apply Corollary \ref{Cchordal} with $A = \{ 1 \}, B = \{ r \}$, and $S =
\{ 2, \dots, r-1 \}$. It follows immediately that
\[ \calh^\phi_{K_r^{(1)}} = 2\N \cup [r-2,\infty), \qquad
\calh^\psi_{K_r^{(1)}} = (-1+2\N) \cup [r-2,\infty). \]

\noindent Finally, $\calh_{K_r} = \N \cup [r-2,\infty) \subset
\calh_{K_r^{(1)}}$. To show the reverse inclusion, suppose $x^\alpha$
preserves $\bp_{K_r^{(1)}}([0,\infty))$. Given ${\bf u}, {\bf v} \in
[0,\infty)^{r-2}$ and $M \in \bp_{r-2}([0,\infty))$, define $W({\bf u},
{\bf v}, M)$ as in \eqref{Enk1}. Then $W({\bf u}, {\bf v}, {\bf u} {\bf
u}^T + {\bf v} {\bf v}^T) \in \bp_{K_r^{(1)}}([0,\infty))$, so
\begin{equation*}
W({\bf u}, {\bf v}, {\bf u} {\bf u}^T + {\bf v} {\bf v}^T)^{\circ \alpha}
\in \bp_{K_r^{(1)}}([0,\infty)), \qquad \forall {\bf u}, {\bf v} \in
[0,\infty)^{r-2}. 
\end{equation*}

\noindent Proceeding as in \eqref{Esuper_rank1}, we conclude that the
entrywise function $x \mapsto x^\alpha$ is Loewner super-additive on rank
one matrices in $\bp_{r-2}([0,\infty))$. Thus $\alpha \in \N$ or $\alpha
\geq r-2$ by Theorem \ref{Tsuperadd}. It follows that $\calh_{K_r^{(1)}}
= \N \cup [r-2,\infty)$. This proves the theorem for $G = K_r^{(1)}$. 

Now suppose $G$ is an arbitrary chordal graph, which without loss of
generality we assume to be connected. Denote by $r$ the largest integer
such that $G$ contains $K_r$ or $K_r^{(1)}$ as an induced subgraph. By
the above calculation, 
\begin{equation}\label{Einclusion}
\calh_G \subset \N \cup [r-2,\infty), \qquad \calh_G^\psi \subset
(-1+2\N) \cup [r-2,\infty), \qquad \calh_G^\phi = 2\N \cup [r-2,\infty).
\end{equation}

\noindent We now prove the reverse inclusions. By Theorem
\ref{Tchordalcarac}, the maximal cliques of $G$ admit a perfect ordering
$\{C_1, \dots, C_k\}$. We will prove the reverse inclusions in
\eqref{Einclusion} by induction on $k$. If $k=1$, then $G$ is complete
and the inclusions clearly hold by Theorem \ref{Tcomplete}. Suppose the
result holds for all chordal graphs with $k=l$ maximal cliques, and let
$G$ be a graph with $k = l+1$ maximal cliques. For $1 \leq j \leq k$,
define 
\begin{equation}\label{EhistoriesC}
H_j := C_1 \cup \dots \cup C_j, \qquad C_j = C_j \setminus H_{j-1},
\qquad S_j = H_{j-1} \cap C_j
\end{equation}

\noindent as in \eqref{Ehistories}. By \cite[Lemma 2.11]{lauritzen}, the
triplet $(H_{k-1}, S_k, R_k)$ is a decomposition of $G$. Let $r$ be the
largest integer such that $G$ contains $K_r$ or $K_r^{(1)}$ as an induced
subgraph, and let $\alpha \in [r-2,\infty)$. By the induction hypothesis,
the three $\alpha$-th power functions preserve positivity on
$\bp_{G_{H_{k-1} \cup S_k}} = \bp_{G_{H_{k-1}} }$.
Moreover, since $\alpha \geq r-2$, they also preserve positivity on
$\bp_{G_{C_k \cup S_k}} = \bp_{G_{C_k}}$. We now claim that $r \geq |S_k|
+ 2$. Clearly, $|S_k| \leq r$ since $S_k$ is complete. If $|S_k| = r$,
then $C_k$ is contained in one of the previous cliques, which is a
contradiction.
Suppose instead that $|S_k| = r-1$. Since $\{C_1, \dots, C_k\}$ is a
perfect ordering, $S_k \subset C_i$ for some $i < k$. Let $v \in C_i
\setminus S_k$ and let $w \in R_k$. Note that both $v$ and $w$ are
adjacent to every $s \in S_k$. Thus, the subgraph of $G$ induced by $S_k
\cup \{v,w\}$ is isomorphic to $K_{r+1}^{(1)}$, which contradicts the
definition of $r$. We therefore conclude that $r \geq |S_k| + 2$, as
claimed. As a consequence, the $\alpha$-th power functions are Loewner
super-additive on $\bp_{S_k}$ by Theorem \ref{Tsuperadd}. Applying
Theorem \ref{Tchordal}, we conclude that $\alpha \in \calh_G^{\psi},
\calh_G^\phi$. Since $\calh_G^\psi \cup \calh_G^\phi \subset \calh_G$, we
obtain that $\alpha \in \calh_G$ as well. This concludes the proof of the
theorem. 
\end{proof}

\begin{remark}\label{Ralternative_crit}
The critical exponent of a chordal graph $G$ can also be defined as
$\max(c-2,s)$, where $c = \omega(G)$ is the clique number of $G$, and $s$
is the size of the largest separator associated to a perfect clique
ordering of $G$ (see \eqref{EhistoriesC}). This follows from the proof of
Theorem \ref{Tmain} where it was shown that if such a separator has size
$s$, then either $s \leq c-2$ or $G$ contains $K_{s+2}^{(1)}$ as an
induced subgraph. The critical exponent can also be computed by replacing
$s$ by the size of the largest intersection of two maximal cliques, as
shown in Corollary \ref{Cformula} below. 
\end{remark}

We now mention several consequences of the above analysis in this
section. The following corollary provides a formula that can be used to
systematically compute the critical exponent of a chordal graph.

\begin{corollary}\label{Cformula}
Suppose $G = (V,E)$ is chordal with $V = \{ v_1, \dots, v_m \}$, and let
$C_1, \dots, C_n$ denote the maximal cliques in $G$. Define the $m \times
n$ ``maximal clique matrix'' $M(G)$ of $G$ to be $M(G) := ({\bf 1}(v_i
\in C_j))$, i.e.,  
\begin{equation*}
M(G)_{ij}  = \begin{cases}
1 & \textrm{ if } v_i \in C_j, \\ 
0 & \textrm{ otherwise}.
\end{cases}
\end{equation*}
Let $u_1, \dots, u_n \in \{ 0, 1 \}^m$ denote the columns of $M(G)$. Then
the critical exponent of $G$ is given by
\begin{equation}\label{Eformula}
CE_H(G) = CE^\psi_H(G) = CE^\phi_H(G) = \max_{i,j} (u_i^T u_j - 2
\delta_{i,j}),
\end{equation}

\noindent i.e., the largest entry of $M(G)^T M(G) - 2 \Id_{|V|}$.
\end{corollary}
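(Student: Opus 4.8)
The plan is to reduce the formula \eqref{Eformula} to the characterization of the critical exponent already established in Theorem \ref{Tmain} and Remark \ref{Ralternative_crit}, namely that $CE_H(G) = \max(c-2, s)$, where $c = \omega(G)$ is the clique number and $s$ is the size of the largest separator in a perfect ordering of the maximal cliques. The key observation is purely combinatorial: for the columns $u_1, \dots, u_n$ of the maximal clique matrix $M(G)$, the inner product $u_i^T u_j$ counts exactly the number of vertices lying in both $C_i$ and $C_j$, i.e. $u_i^T u_j = |C_i \cap C_j|$ for $i \ne j$, and $u_i^T u_i = |C_i|$. Therefore the right-hand side of \eqref{Eformula} equals $\max\bigl(\max_i (|C_i| - 2),\ \max_{i \ne j} |C_i \cap C_j|\bigr) = \max\bigl(c - 2,\ \max_{i \ne j} |C_i \cap C_j|\bigr)$, since $\max_i |C_i| = \omega(G) = c$ as the $C_i$ are the maximal cliques.

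It thus remains to show that $\max_{i \ne j} |C_i \cap C_j| = \max(s, \text{something} \le c-2)$ in the precise sense that $\max\bigl(c-2, \max_{i \ne j}|C_i \cap C_j|\bigr) = \max(c-2, s)$. First I would prove $s \le \max_{i \ne j}|C_i \cap C_j|$: any separator $S_k$ in a perfect ordering satisfies $S_k \subset C_k$ (since $S_k = H_{k-1} \cap C_k$), and by property (1) of a perfect ordering there is some $i < k$ with $S_k \subset C_i$, so $S_k \subseteq C_i \cap C_k$, giving $|S_k| \le |C_i \cap C_k|$ for the appropriate pair; maximizing over $k$ yields $s \le \max_{i\ne j}|C_i\cap C_j|$. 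For the reverse direction, given any two maximal cliques $C_i \ne C_j$, their intersection $C_i \cap C_j$ is a complete subgraph, hence a clique, so $|C_i \cap C_j| \le c$; moreover if $|C_i \cap C_j| = c$ then $C_i \cap C_j$ is already a maximal clique and must equal both $C_i$ and $C_j$, a contradiction, and similarly if $|C_i \cap C_j| = c - 1$, then $C_i \cap C_j$ together with one extra vertex of $C_i$ and one of $C_j$ induces $K_{c+1}^{(1)}$ (exactly the argument run at the end of the proof of Theorem \ref{Tmain}), contradicting $\omega(G) = c$ — wait, this only rules out $c-1$ when no $K_{c+1}^{(1)}$ is present, so instead I would argue: either $|C_i \cap C_j| \le c - 2$, or $G$ contains $K_{|C_i \cap C_j| + 2}^{(1)}$ (or $K_{|C_i\cap C_j|+2}$) as an induced subgraph, and in the latter case that forces $|C_i \cap C_j| + 2 \le r$ where $r$ is the integer from Theorem \ref{Tmain}, which equals $CE_H(G) + 2 = \max(c-2,s) + 2$; either way $|C_i \cap C_j| \le \max(c-2, s)$. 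Combining both directions gives $\max\bigl(c-2, \max_{i\ne j}|C_i\cap C_j|\bigr) = \max(c-2,s) = CE_H(G)$, and the same value serves for $CE_H^\psi(G)$ and $CE_H^\phi(G)$ by Theorem \ref{Tmain}.

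The main obstacle is the reverse inequality in the previous paragraph: showing that a large pairwise intersection of maximal cliques cannot exceed $\max(c-2, s)$ without producing a forbidden $K_{r+1}$ or $K_{r+1}^{(1)}$. The cleanest route is to extract, from a pair $C_i, C_j$ with $|C_i \cap C_j| = t$, either a $K_{t+2}$ (when $C_i \cup C_j$ happens to be complete) or a $K_{t+2}^{(1)}$ (taking the $t$ common vertices, one private vertex from each of $C_i$ and $C_j$, noting those two private vertices are the unique potentially non-adjacent pair), and then invoke the definition of $r$ in Theorem \ref{Tmain} to conclude $t \le r - 2 = CE_H(G)$. This directly gives $\max_{i \ne j}|C_i \cap C_j| \le CE_H(G)$, which combined with $CE_H(G) = \max(c-2, s) \le \max\bigl(c-2, \max_{i\ne j}|C_i\cap C_j|\bigr)$ from the first direction closes the loop and establishes \eqref{Eformula}.
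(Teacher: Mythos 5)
Your proof is correct and shares the paper's overall strategy: rewrite the right-hand side of \eqref{Eformula} as $\max\bigl(c-2,\ \max_{i\ne j}|C_i\cap C_j|\bigr)$ and reduce to the formula $CE_H(G)=\max(c-2,s)$ of Remark \ref{Ralternative_crit}, where $s$ is the size of the largest separator in a perfect ordering of the maximal cliques. The inequality $s\le\max_{i\ne j}|C_i\cap C_j|$ is proved exactly as in the paper (each separator $S_{i_j}$ sits inside $C_{i_j}\cap C_{i_{j'}}$ for some earlier clique $C_{i_{j'}}$ of the perfect ordering). Where you genuinely differ is the reverse direction. The paper disposes of it with a one-line containment: if $C_{i_k}$ precedes $C_{i_l}$ in the perfect ordering, then $C_{i_k}\cap C_{i_l}\subseteq H_{i_l-1}\cap C_{i_l}=S_{i_l}$, so that $\max_{i\ne j}|C_i\cap C_j|$ is in fact \emph{equal} to the largest separator size. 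You instead take two distinct maximal cliques with intersection of size $t$, pick one private vertex in each (these exist by maximality and distinctness), observe that the induced subgraph on these $t+2$ vertices is $K_{t+2}$ or $K_{t+2}^{(1)}$ since the only possibly missing edge joins the two private vertices, and invoke Theorem \ref{Tmain} to conclude $t\le r-2=CE_H(G)=\max(c-2,s)$. This is valid and closes the loop, but it calls on the full analytic content of Theorem \ref{Tmain} where a purely combinatorial set containment suffices, and it yields only the bound $\max_{i\ne j}|C_i\cap C_j|\le\max(c-2,s)$ rather than the exact identification with $s$. Both routes prove the corollary; the paper's is shorter and self-contained at that step, while yours makes explicit the link between large pairwise clique intersections and the forbidden subgraphs $K_{r}^{(1)}$ that drive Theorem \ref{Tmain}.
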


\begin{proof}
Let $c$ and $s$ denote the maximum of the diagonal and off-diagonal
entries of $M(G)^T M(G) - 2 \Id_{|V|}$ respectively. Clearly, $c$ is the
size of the maximal cliques of $G$ minus $2$ and $s = \max_{i \ne j} |C_i
\cap C_j|$. By Theorem \ref{Tchordalcarac}, the cliques of $G$ admit a
perfect ordering, say,  $C_{i_1}, \dots, C_{i_n}$. For $i \ne j$, let $k,
l$ be such that $C_i = C_{i_k}$ and $C_j = C_{i_l}$. Without loss of
generality, assume $i_k < i_l$. Then $C_i \cap C_j = C_{i_k} \cap C_{i_l}
\subset H_{i_l -1} \cap C_{i_l} = S_{i_l}$, where our notation is as in
\eqref{EhistoriesC}.
Thus, $s \leq \max_{j=1, \dots, n} |S_{i_j}|$. Conversely, since
$C_{i_1}, \dots, C_{i_n}$ is a perfect ordering, for every $1 \leq j \leq
n$, we have $S_{i_j} \subset C_{i_{j'}}$ for some $i_{j'} < i_j$. Thus,
$S_{i_j} \subset C_{i_j} \cap C_{i_{j'}}$ and so $s \geq \max_{j=1,
\dots, n} |S_{i_j}|$. It follows that $s$ corresponds to the order of the
largest separator in the perfect ordering of the cliques of $G$. We
conclude by Theorem \ref{Tmain} and Remark \ref{Ralternative_crit} that
that the critical exponents of $G$ correspond to the maximal entry of
$M(G)^T M(G) - 2 \Id_{|V|}$. 
\end{proof}

For completeness, we remark that Theorem \ref{Tchordal} also has the
following consequence for general entrywise maps. The proof is similar to
that of Theorem \ref{Tmain}.

\begin{corollary}\label{Tchordal_gen}
Let $G$ be a chordal graph, and let $\{ C_1, \dots, C_k \}$ be a perfect
ordering of its maximal cliques. Define
\begin{equation*}
c := \max_{i=1,\dots, k} |C_i| = \omega(G),
\qquad s := \max_{i=1, \dots, k} |S_i|, 
\end{equation*}

\noindent where $S_i$ is defined as in \eqref{EhistoriesC}. If
$f : \R \to \R$ is such that $f[-]$ preserves positivity on $\bp_{K_c}$
and is Loewner super-additive on $\bp_{K_s}$, then $f[-]$ preserves
positivity on $\bp_G$.
\end{corollary}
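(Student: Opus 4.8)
The plan is to mimic the induction used in the proof of Theorem \ref{Tmain}, replacing the explicit power functions by the general map $f$. First I would reduce to the connected case and invoke Theorem \ref{Tchordalcarac} to obtain a perfect ordering $\{C_1,\dots,C_k\}$ of the maximal cliques of $G$. The induction is on $k$; when $k=1$ the graph $G=K_c$ is complete, and the hypothesis that $f[-]$ preserves positivity on $\bp_{K_c}$ is exactly the base case. For the inductive step, with $G$ having $k=l+1$ maximal cliques, I would use \cite[Lemma 2.11]{lauritzen} to write $(H_{k-1},S_k,R_k)$ as a decomposition of $G$, where $H_{k-1}=C_1\cup\dots\cup C_{k-1}$, $R_k=C_k\setminus H_{k-1}$, $S_k=H_{k-1}\cap C_k$, as in \eqref{EhistoriesC}.

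Next I would verify the three hypotheses of Theorem \ref{Tchordal}(1) for the decomposition $(H_{k-1},S_k,R_k)$. The subgraph $G_{H_{k-1}\cup S_k}=G_{H_{k-1}}$ is chordal with $l$ maximal cliques whose $c$- and $s$-parameters are no larger than those of $G$ (the separators of its induced perfect ordering form a subset of $\{S_1,\dots,S_{k-1}\}$ and its maximal cliques are among $C_1,\dots,C_{k-1}$); so by the induction hypothesis $f[-]$ preserves positivity on $\bp_{G_{H_{k-1}}}$. The subgraph $G_{C_k\cup S_k}=G_{C_k}=K_{|C_k|}$ is a complete graph on at most $c$ vertices, so $f[-]$ preserves positivity on it because it does so on $\bp_{K_c}$ (a smaller complete graph corresponds to a principal submatrix). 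Finally $S_k$ is a complete set of size at most $s$, so $G_{S_k}=K_{|S_k|}$ with $|S_k|\le s$, and Loewner super-additivity of $f[-]$ on $\bp_{K_s}$ restricts to Loewner super-additivity on $\bp_{G_{S_k}}$. Theorem \ref{Tchordal}(1) then yields that $f[-]$ preserves positivity on $\bp_G$, closing the induction.

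The bookkeeping obstacle — and the only place that needs genuine care — is confirming that passing to the induced subgraph $G_{H_{k-1}}$ does not increase the clique number or the maximal separator size, so that the induction hypothesis applies with the \emph{same} pair $(c,s)$ (or a smaller one). This is exactly the content built into the proof of Theorem \ref{Tmain}: the maximal cliques of $G_{H_{k-1}}$ are precisely $C_1,\dots,C_{k-1}$ (none of them is absorbed, since $\{C_1,\dots,C_k\}$ are the maximal cliques of $G$ and removing $R_k$ cannot create a new containment among the $C_i$ with $i<k$), and $C_1,\dots,C_{k-1}$ remains a perfect ordering of them with separators $S_2,\dots,S_{k-1}$. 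Hence $\omega(G_{H_{k-1}})\le c$ and its largest separator has size $\le s$, which is all that is needed. One should also note, as in the proof of Theorem \ref{Tchordal}(1), that the hypotheses force $f(0)=0$ and right-continuity of $f$ at $0$, so the continuity argument there (passing from $\widetilde M_\epsilon=M+\epsilon\,\Id$ to $M$) goes through verbatim; no new analytic input is required.
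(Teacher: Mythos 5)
Your proof is correct and follows essentially the same route as the paper, which simply states that the proof is similar to that of Theorem \ref{Tmain}: induction on the number of maximal cliques in a perfect ordering, decomposing along $(H_{k-1},S_k,R_k)$ and applying Theorem \ref{Tchordal}(1), with the hypotheses on $\bp_{K_c}$ and $\bp_{K_s}$ replacing the role of Theorems \ref{Tcomplete} and \ref{Tsuperadd}. Your care with the bookkeeping (that $C_1,\dots,C_{k-1}$ remain the maximal cliques of $G_{H_{k-1}}$ with the same separators, and that $f(0)=0$ and the continuity argument carry over) fills in exactly the details the paper leaves implicit.
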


Note that Corollary \ref{Tchordal_gen} uses a clique ordering of the
vertices of a chordal graph $G$. A natural parallel approach in studying
functions preserving positivity is to build the graph $G$ step by step by
using a perfect ordering of the vertices. The following proposition
formalizes this procedure.

\begin{definition}
Given a graph $G$ on a vertex set $V$, denote by $N(v)$ the
\textit{neighborhood} of a vertex $v \in V$, i.e., $N(v) = \{w \in V :
(v,w) \in E\}$. A vertex $v \in V$ is said to be \textit{simplicial} if
$N(v) \cup \{v\}$ is complete. An ordering $\{v_1, \dots, v_n\}$ of the
vertices of $V$ is said to be a \textit{perfect elimination ordering} if
for all $i=1, \dots, n$, $v_i$ is simplicial in the subgraph of $G$
induced by $\{v_1, \dots, v_i\}$. 
\end{definition}

\begin{proposition}
Let $G$ be a chordal graph with a perfect elimination ordering of its
vertices $\{ v_1, \dots, v_n \}$. For all $1 \leq k \leq n$, denote by
$G_k$ the induced subgraph on $G$ formed by $\{ v_1, \dots, v_k \}$, so
that the neighbors of $v_k$ in $G_k$ form a clique. Define $c =
\omega(G)$ to be the clique number of $G$, and
\begin{equation*}
d := \max_{k=1, \dots, n} \deg_{G_k}(v_k).
\end{equation*}

\noindent If $f : \R \to \R$ is any function such that $f[-]$ preserves
positivity on $\bp_c^1(\R)$ and $f[M+N] \geq f[M] + f[N]$ for all $M
\in \bp_d(\R)$ and $N \in \bp_d^1(\R)$, then $f[-]$ preserves positivity
on $\bp_G(\R)$.
(Here, $\bp_c^1$ denotes the matrices in $\bp_c$ of rank at most one.)
\end{proposition}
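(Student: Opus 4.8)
The plan is to induct along the perfect elimination ordering, building $G$ one simplicial vertex at a time, and to run at each step a rank-one refinement of the argument that proves Theorem~\ref{Tchordal}(1). Write $G_k$ for the induced subgraph on $\{v_1,\dots,v_k\}$; I will show by induction on $k$ that $f[-]$ preserves positivity on $\bp_{G_k}(\R)$, the case $k=n$ being the claim. The first step is two easy reductions. Applying the superadditivity hypothesis to $M=N=\mathbf{0}_d$ gives $f[\mathbf{0}_d]\succeq 2f[\mathbf{0}_d]$, hence $f(0)\,\mathbf{1}\mathbf{1}^T\preceq 0$ and $f(0)\le 0$; feeding $\mathbf{0}_c$ and then $\operatorname{diag}(x,0,\dots,0)$ into the $\bp_c^1$-positivity hypothesis gives first $f(0)\ge 0$, hence $f(0)=0$, and then $f\ge 0$ on $[0,\infty)$. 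In particular $f[-]$ preserves the zero pattern of any matrix with entries in $\R$, and the base case $k=1$ (where $\bp_{G_1}(\R)=[0,\infty)$, which embeds in $\bp_c^1$) holds. We may assume $d\ge 1$, as the display $M=N=\mathbf{0}_d$ requires; if $d=0$ then $G$ has no edges and the statement is trivial modulo $f(0)=0$.

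For the inductive step, $G_k$ is $G_{k-1}$ with the simplicial vertex $v_k$ adjoined, so $C:=N_{G_k}(v_k)\subseteq V(G_{k-1})$ is a clique of size $p:=\deg_{G_k}(v_k)\le d$ and $\{v_k\}\cup C$ is a clique of size $p+1\le d+1\le c$, the last inequality because the maximum defining $d$ is attained inside $G$. Let $M\in\bp_{G_k}(\R)$ have $(v_k,v_k)$-entry $a$. If $a=0$, then the $v_k$ row and column of $M$ vanish, $M$ is (under the zero-pattern convention) a matrix of $\bp_{G_{k-1}}(\R)$, and $f[M]\succeq 0$ by induction together with $f(0)=0$. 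If $a>0$, I would split $M=N_1+N_2$ via the Schur complement of $a$: here $N_1:=a^{-1}\mathbf{w}\mathbf{w}^T$, with $\mathbf{w}$ the $v_k$th column of $M$, has rank at most one and is supported on the clique $\{v_k\}\cup C$, while $N_2$ is supported on $V(G_{k-1})$ with its $(C,B)$ block equal to the Schur complement $M/a$ (positive semidefinite, and lying in $\bp_{G_{k-1}}$, where $B:=V(G_{k-1})\setminus C$). This is the $|A|=1$ case of the decomposition $(\{v_k\},C,B)$ of $G_k$, and its point is precisely that the $(C,C)$ correction $P:=a^{-1}\mathbf{u}\mathbf{u}^T$ occurring in it ($\mathbf{u}$ the $C$-part of $\mathbf{w}$) is automatically of rank at most one; this is the key observation that lets the weakened hypotheses suffice.

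I would then check, exactly as in the proof of Theorem~\ref{Tchordal}(1), that $D:=f[M]-f[N_1]-f[N_2]$ is supported on $C$, with $D_{CC}=f[Q+P]-f[Q]-f[P]$ where $Q:=M_{CC}-P\in\bp_p$ and $P\in\bp_p^1$; all entries of $D$ outside the $(C,C)$ block vanish because $f(0)=0$ and $N_1,N_2$ already reproduce the corresponding entries of $M$. Since $p\le d$, padding $P,Q$ with zero rows and columns to size $d$ and invoking the superadditivity hypothesis (with the rank-one summand $P$) gives $D_{CC}\succeq 0$; since $p+1\le c$, padding $N_1$ to size $c$ and invoking positivity on $\bp_c^1$ gives $f[N_1]\succeq 0$; and $f[N_2]\succeq 0$ by the inductive hypothesis applied to $M/a\in\bp_{G_{k-1}}(\R)$, using $f(0)=0$ to fill the $v_k$ row and column. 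Hence $f[M]=f[N_1]+f[N_2]+D\succeq 0$, and $f[M]\in\bp_{G_k}$ since $f(0)=0$, which closes the induction. The only genuine obstacle is the observation highlighted above — that a single-vertex separator makes the Schur correction rank one, so that rank-one positivity on $\bp_c$ and rank-one-summand superadditivity on $\bp_d$ can replace the full hypotheses of Theorem~\ref{Tchordal}; the remainder is the routine bookkeeping of moving between matrices of sizes $p,p+1,d,c,k-1,k$ by zero-padding, which is harmless precisely because $f$ annihilates $0$.
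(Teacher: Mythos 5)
Your proof is correct and follows essentially the same route as the paper's: induction along the perfect elimination ordering, exploiting that the matrix bordered by the simplicial vertex is rank one (so positivity on $\bp_c^1(\R)$ applies) and that the Schur correction on the neighbourhood clique is a rank-one summand (so the weakened superadditivity applies). The only difference is organizational --- the paper compares the Schur complement of $f[A]$ with $f[S_A]$, which forces it to treat the case $f(a)=0$ separately, whereas your additive decomposition $f[M]=f[N_1]+f[N_2]+D$ sidesteps that case split.
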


As an illustration, if $G$ is a tree, then $c=2$ and $d=1$. Thus the
result extends \cite[Theorem A]{Guillot_Khare_Rajaratnam2012} to
arbitrary chordal graphs, with weakened hypotheses.

\begin{proof}
First note that $f(0) = 0$ since $f$ is nonnegative and super-additive on
$[0,\infty)$ by assumption. We now prove the result for $G_k$ by
induction on $k$. Clearly the result holds for $k=1$. Now suppose the
result holds for $k$. Assume without loss of generality that the
neighbors of $v_{k+1} \in V(G)$ are $v_1, \dots, v_l$ for some $1 \leq l
\leq k$, which are all adjacent to one another. Now write a matrix $A \in
\bp_{G_{k+1}}(\R)$ in the following block form, and also define an
associated matrix $U(A)$:
\[ A = \begin{pmatrix} P & Q & {\bf u}\\ Q^T & R & {\bf 0}\\ {\bf u}^T &
{\bf 0}^T & a \end{pmatrix}, \qquad U(A) := \begin{pmatrix} a^{-1} {\bf
u} {\bf u}^T & {\bf u}\\ {\bf u}^T & a \end{pmatrix}, \]

\noindent where $Q$ is $l \times (k-l)$, and we may assume that $a>0$.
Note that if $f(a) = 0$, then applying $f$ entrywise to the submatrix
$U(A) \oplus {\bf 0}_{(k-l) \times (k-l)} \in \bp_{G_{k+1}}^1(\R)$ (by
abuse of notation) shows that $f[{\bf u}] = {\bf 0}$. Hence $f[A] \in
\bp_{G_{k+1}}(\R)$ by the induction hypothesis for $G_k$.

Now suppose $f(a) > 0$. It suffices to show that the Schur complement
$S_{f[A]}$ of $f[A]$ with respect to $f(a)$ is also positive
semidefinite. Note that the Schur complement $S_A$ of $A$ with respect to
$a$ belongs to $\bp_{G_k}(\R)$. Therefore by the induction hypothesis,
$f[S_A]$ is also positive semidefinite. Thus it suffices to show that
$S_{f[A]} - f[S_A] \geq 0$. Now compute:
\[ S_{f[A]} - f[S_A] = \begin{pmatrix} f[P] - f(a)^{-1} f[{\bf u}] f[{\bf
u}]^T - f[P - a^{-1} {\bf u} {\bf u}^T] & {\bf 0}\\ {\bf 0}^T & {\bf 0}
\end{pmatrix}. \]

\noindent Next, note that $c \geq l+1$ since the subgraph of $G$ induced
by $\{ v_1, \dots, v_l, v_{k+1} \}$ is complete. Moreover, since $U(A)
\in \bp_{k+1}^1(\R)$, it follows by the assumptions on $f$ that the Schur
complement of the last entry in $f[U(A)]$ is positive semidefinite, i.e.,
\[ f[U(A)] = \begin{pmatrix} f[a^{-1} {\bf u } {\bf u}^T] & f[{\bf u}] \\
f[{\bf u }]^T & f(a) \end{pmatrix} \geq 0 \quad \implies \quad f[a^{-1}
{\bf u}{\bf u}^T] \geq \frac{f[{\bf u}] f[{\bf u}]^T}{f(a)}. \]

Furthermore, $f(0) = 0$ and $l = \deg_{G_k} v_k \leq d$. Hence $f[M+N]
\geq f[M] + f[N]$ for all $M \in \bp_l(\R)$ and $N \in \bp_l^1(\R)$. Set
$N := a^{-1} {\bf u} {\bf u}^T$ and $M := P-N$, and compute using the
above analysis:
\[ f[P] - f(a)^{-1} f[{\bf u}] f[{\bf u}]^T - f[P - a^{-1} {\bf u} {\bf
u}^T] \geq f[P] - f[a^{-1} {\bf u} {\bf u}^T] - f[P - a^{-1} {\bf u} {\bf
u}^T] \geq 0. \]

\noindent It follows that $S_{f[A]} \geq f[S_A] \geq 0$, whence $f[A] \in
\bp_{G_{k+1}}(\R)$ as claimed. This completes the induction step. The
result now follows by setting $k=n$.
\end{proof} 

We now study how the set of powers preserving positivity on $\bp_G$ can
be related to the corresponding set of powers for $\bp_{G/v}$, for
arbitrary graphs $G$.

As an illustration of Theorem \ref{Tmain}, we compute in Corollary
\ref{Cexplicit} the critical exponents of well-known chordal graphs
explicitly. Recall that an \textit{Apollonian graph} is a planar graph
formed from a triangle graph by iteratively adding an interior point as
vertex, and connecting it to all three vertices of the smallest triangle
subgraph in whose interior it lies. A graph is \textit{outerplanar} if
every vertex of the graph lies in the unbounded face of the graph in a
planar drawing. An outerplanar graph is \textit{maximal} if adding an
edge makes it non-outerplanar.
A graph $G = (V,E)$ is \textit{split} if its vertices can be partitioned
into a clique $C$ and an independent subset $V \setminus C$.
Finally, the \textit{band graph} with $n$ vertices
$\{1,\dots,n\}$ and bandwidth $d$ is the graph where $(i,j) \in E$ if and
only if $i \ne j$ and $|i-j| \leq d$. For references, see
e.g.~\cite{chordal-split,Dym-Gohberg,golumbic,Max-Outerplanar}.

\begin{corollary}\label{Cexplicit}
The critical exponents of some important chordal graphs are given in
Table \ref{Table_chordal}.
\end{corollary}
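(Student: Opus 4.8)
The plan is to derive every entry of Table \ref{Table_chordal} from Theorem \ref{Tmain}, using the convenient reformulation recorded in Corollary \ref{Cformula} and Remark \ref{Ralternative_crit}: once a graph $G$ is known to be chordal, its three Hadamard critical exponents all equal $\max(\omega(G)-2,\, s(G))$, where $\omega(G)$ is the clique number and $s(G) := \max_{i\neq j} |C_i \cap C_j|$ is the largest size of an intersection of two \emph{distinct} maximal cliques $C_i, C_j$ of $G$. Thus for each family the task splits into two purely combinatorial steps: (a) verify chordality, and (b) read off $\omega(G)$ and $s(G)$ from an explicit description of the maximal cliques. No further input about preservation of positivity is needed.

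For step (a): $k$-trees --- and Apollonian graphs, which are planar $3$-trees --- are chordal because the reverse of their defining construction (each new vertex is attached to a clique) is a perfect elimination ordering, so no chordless cycle of length $\geq 4$ can arise. Band graphs of bandwidth $d$ are unit interval graphs (assign vertex $i$ the interval $[i,i+d]$), hence chordal; equivalently $1,2,\dots,n$ is a perfect elimination ordering, since the earlier neighbors of $i$ are $\{i-d,\dots,i-1\}\cap\{1,\dots,i-1\}$, a clique. Maximal outerplanar graphs are polygon triangulations and are classically chordal (indeed $2$-trees). Split graphs are chordal since a chordless cycle of length $\geq 4$ would use at least two vertices of the clique part and those would have to be non-adjacent, a contradiction.

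For step (b): in a $k$-tree every maximal clique has size $k+1$ and the separator created when a vertex is attached is a $k$-clique lying in (at least) two maximal cliques, so $\omega = k+1$ and $s = k$, giving $CE = \max(k-1,k) = k$; specializing, $k=1$ recovers trees ($CE=1$, consistent with Theorem \ref{Taddtrees}) and $k=3$ gives Apollonian graphs with at least one interior vertex, so $CE=3$ (the bare triangle $K_3$ gives $CE = 1$). For a band graph of bandwidth $d$ on $n \geq d+2$ vertices the maximal cliques are the ``windows'' $\{i,i+1,\dots,i+d\}$, so $\omega = d+1$ and consecutive windows meet in $d$ vertices, whence $s = d$ and $CE = \max(d-1,d) = d$; when $n \leq d+1$ the graph is $K_n$ and $CE = n-2$. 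For a maximal outerplanar graph on $n \geq 4$ vertices the maximal cliques are the triangles of the triangulation and two of them share at most an edge, so $\omega = 3$, $s = 2$, $CE = \max(1,2) = 2$ (for $n=3$ it is $K_3$, $CE=1$; and since two triangles sharing an edge form $K_4^{(1)}$, this is also immediate from the ``$r-2$'' description). For a split graph with clique part $C$ and independent part $I$ the maximal cliques are exactly the sets $N(v)\cup\{v\}$ for $v\in I$ together with $C$ itself when no $v\in I$ is adjacent to all of $C$; hence $\omega = \max\big(|C|,\ 1+\max_{v\in I}|N(v)|\big)$ and $s = \max_{v\neq w\in I}|N(v)\cap N(w)|$ (with $\max_{v\in I}|N(v)|$ also contributing when $C$ is maximal), and plugging these into $\max(\omega-2,s)$ yields the stated value --- e.g.\ for the complete split graph ($|C|=m$, $|I|=\ell\geq 2$, full adjacency between the parts) one gets $\omega = m+1$, $s = m$, hence $CE = m$.

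The work here is entirely bookkeeping rather than analysis, and the main obstacle is handling the degenerate boundary cases --- $n\le d+1$ for band graphs, $n\le 3$ for outerplanar graphs, a $0$- or $1$-vertex ``tree'', a split graph in which $C$ is not itself maximal, or a $k$-tree with only $k+1$ vertices --- where one must check that $\max(\omega-2,s)$ either still gives the claimed number or correctly collapses to the complete-graph value $n-2$ (cf.\ Corollary \ref{CE0} for the $CE=0$ boundary). Everything else follows directly from Theorem \ref{Tmain} and Corollary \ref{Cformula}.
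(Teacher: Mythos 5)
Your approach is exactly the paper's: reduce everything to Theorem \ref{Tmain} via the formula $\max(\omega(G)-2,s)$ of Remark \ref{Ralternative_crit}/Corollary \ref{Cformula}, then identify the maximal cliques and their largest pairwise intersection for each family. In fact you are more complete than the paper, which writes out only the band-graph row (with the same windows $\{l,\dots,l+d\}$, $\omega=d+1$, $s=d$, and the degenerate cases $n\le d+1$ collapsing to complete graphs) and leaves the remaining rows to the reader; your clique inventories for $k$-trees/Apollonian graphs, maximal outerplanar graphs, and split graphs are correct, and your boundary-case bookkeeping matches the $\min(\cdot,n-2)$ entries of Table \ref{Table_chordal}. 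The one row you do not address explicitly is the minimal planar triangulation of $C_n$ for $n\ge 4$; this is harmless since any minimal chordal completion of a cycle is a polygon triangulation, hence a maximal outerplanar graph, so it is subsumed by that case (equivalently, it contains $K_4^{(1)}$ but no $K_4$, giving critical exponent $2$), but it would be worth a sentence to say so.
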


\begin{table}[h]
\begin{tabular}{|c|c|}
\hline
Graph $G$ & $CE_H(G), CE_H^\psi(G), CE_H^\phi(G)$ \\ \hline
Tree & 1 \\
Complete graph $K_n$ & $n-2$ \\
Minimal planar triangulation of $C_n$ for $n \geq 4$ & 2 \\
Apollonian graph, $n \geq 3$ & $\min(3,n-2)$ \\
Maximal outerplanar graph, $n \geq 3$ & $\min(2,n-2)$\\
Band graph with bandwidth $d \leq n$ & $\min(d,n-2)$\\
Split graph with maximal clique $C$ & $\max(|C|-2, \max \deg(V \setminus
C))$\\ \hline
\end{tabular}
\medskip
\caption{Critical exponents of important families of chordal graphs with
$n$ vertices.}
\label{Table_chordal}
\end{table}

\begin{proof}
We will prove the result only for band graphs. First, if $n=d,d+1$, then
$G = K_d, K_{d+1}^{(1)}$ respectively, and so the critical exponents are
$d-2$ and $d-1$, which shows the result. Now suppose $n \geq d+2$. The
maximal cliques of $G$ are 
\begin{equation*}
C_l := \{l, l+1, \dots, l+d\} \qquad 1 \leq l \leq n-d. 
\end{equation*}

\noindent It is not difficult to verify that this enumeration of the
maximal cliques of $G$ is perfect. The largest clique has size $d+1$ and
the largest separator (as defined in \eqref{EhistoriesC}) has size $d$.
It follows from Theorem \ref{Tmain} (see Remark \ref{Ralternative_crit})
that the three critical exponents of $G$ are equal to $d$. 
\end{proof}

\begin{remark}
Another important family of chordal graphs that is widely used in
applications is the family of \textit{interval graphs} \cite[Chapter
8]{golumbic}. Given a family $V$ of intervals in the real line, the
corresponding interval graph has vertex set equal to $V$, and two
vertices are adjacent if the corresponding intervals intersect. Interval
graphs are known to be chordal; moreover, to compute their critical
exponents we define the \textit{height function} at $x \in \R$ to be the
number of intervals containing $x$. It is standard that the maximal
cliques correspond precisely to the intervals containing the local maxima
of the height function; see e.g.~\cite[Section 2]{IntervalG}. The
critical exponent of interval graphs can then be easily computed by using
Corollary \ref{Cformula}. 
\end{remark}

Note that every non-chordal graph $G$ is contained in a minimal
triangulation $G_\Delta$. This triangulation immediately provides an
upper bound on the critical exponents for preserving positivity for $G$.
A lower bound is provided by $r-2$, where $r$ is the size of the largest
clique in $G$. The critical exponents of some non-chordal graphs are
studied in more detail in Section \ref{Snon-chordal}. 

\section{Non-chordal graphs}\label{Snon-chordal}

In the remainder of the paper, we discuss power functions preserving
positivity on $\bp_G$ for graphs $G$ that are not chordal. We begin by
extending Corollary \ref{Tchordal_gen} to general graphs. Recall that a
\textit{decomposition} of a graph $G = (V,E)$ is a partition $(A,C,B)$ of
$V$, where $C$ separates $A$ from $B$ (i.e., every path from a vertex $a
\in A$ to a vertex $b \in B$ contains a vertex in $C$), and $G_C$ is
complete. A graph is said to be \textit{prime} if it admits no such
decomposition. For example, every cycle is prime. A decomposition
separates a graph into two components $G_{A \cup C}$ and $G_{B \cup C}$.
Iterating this process until it cannot be performed anymore produces
\textit{prime components} of the graph $G$. The resulting prime
components can be ordered to form a perfect sequence (as defined after
\eqref{Ehistories}) - see \cite{Diestel1989, Roverato2002}. When $G$ is
chordal, its prime components are all complete. Conversely, if the prime
components of a graph are complete, the graph is chordal by
\cite{Dirac1961} (see also \cite[Theorem 3.1]{Diestel1989}).

\begin{theorem}\label{Tprime}
Let $G$ be a graph with a perfect ordering $\{B_1, \dots, B_k\}$ of its
prime components, and let $f : \R \to \R$ be such that $f(0) = 0$. Define
\begin{equation*}
s := \max_{i=1, \dots, k} |S_i|, 
\end{equation*}

\noindent where $S_i$ is defined as in \eqref{Ehistories}. If $f[-]$
preserves positivity on $\bp_{B_i}$ for all $1 \leq i \leq k$ and is
Loewner super-additive on $\bp_{K_s}$, then $f[-]$ preserves positivity
on $\bp_G$.
\end{theorem}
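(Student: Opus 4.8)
The plan is to argue by induction on $k$, the number of prime components, in direct parallel with the proof of Theorem~\ref{Tmain} (and the argument behind Corollary~\ref{Tchordal_gen}): Theorem~\ref{Tchordal}(1) does the real work, with the running-intersection structure of a perfect sequence of prime components playing the role that a perfect ordering of maximal cliques played for chordal graphs. The base case $k=1$ is immediate, since then $V = B_1$, so $\bp_G = \bp_{B_1}$ and the hypothesis already says that $f[-]$ preserves positivity on $\bp_G$; here $S_1 = H_0 \cap B_1 = \emptyset$, so $s = 0$ and the superadditivity hypothesis is vacuous.

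For the inductive step, suppose the result is known whenever a graph admits a perfect ordering of at most $k-1$ prime components, and let $G$ have the perfect ordering $B_1,\dots,B_k$. By the structure theory of prime decompositions (\cite{Diestel1989, Roverato2002}; this is the analogue for prime components of \cite[Lemma 2.11]{lauritzen} invoked in the proof of Theorem~\ref{Tmain}), the separator $S_k$ induces a complete subgraph of $G$, and the partition $(A,C,B) := (H_{k-1}\setminus S_k,\ S_k,\ R_k)$ is a decomposition of $G$ with $G_{A\cup C} = G_{H_{k-1}}$ and $G_{B\cup C} = G_{B_k}$. I would then check the three hypotheses of Theorem~\ref{Tchordal}(1). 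First, $f[-]$ preserves positivity on $\bp_{G_{B\cup C}} = \bp_{B_k}$ by assumption. Second, the truncated sequence $B_1,\dots,B_{k-1}$ is a perfect ordering of the prime components of $G_{H_{k-1}}$ (again by \cite{Diestel1989, Roverato2002}: the histories and separators indexed by $j \le k-1$ are unchanged), and all its separators have size $\le s$; since $f(0)=0$, Loewner superadditivity on $\bp_{K_s}$ forces it on $\bp_{K_{s'}}$ for every $s'\le s$ by padding with a zero block, so the induction hypothesis applies to $G_{H_{k-1}}$ and $f[-]$ preserves positivity on $\bp_{G_{A\cup C}}$. Third, by the same padding argument $f[-]$ is Loewner superadditive on $\bp_{G_C} = \bp_{K_{|S_k|}}$ because $|S_k|\le s$. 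Theorem~\ref{Tchordal}(1) then gives that $f[-]$ preserves positivity on $\bp_G$, closing the induction.

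I expect the only delicate points to be bookkeeping rather than substance: invoking correctly the two standard facts that (a) the separators of a perfect sequence of prime components are complete (so that $(H_{k-1}\setminus S_k, S_k, R_k)$ is genuinely a decomposition in the sense required by Theorem~\ref{Tchordal}), and (b) an initial segment of a perfect sequence for $G$ is a perfect sequence for the induced subgraph on the corresponding history. Both are standard in the literature on clique/prime decompositions. A minor secondary point is that the proof of Theorem~\ref{Tchordal}(1) tacitly uses continuity of $f$, which it derives from positivity preservation on $\bp_2$; this is available as soon as some component (or the separator $S_k$, when $s\ge 2$) contains an edge, and in the degenerate edgeless situation the conclusion is elementary. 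So the argument is short, and the essential content is the single application of Theorem~\ref{Tchordal}(1) to the last decomposition of the perfect sequence.
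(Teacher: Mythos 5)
Your proposal is correct and follows essentially the same route as the paper, which simply states that the proof is identical to that of Theorem~\ref{Tmain}: induction on $k$, applying Theorem~\ref{Tchordal}(1) to the decomposition $(H_{k-1}\setminus S_k,\, S_k,\, R_k)$ furnished by the perfect ordering of the prime components. Your explicit treatment of the bookkeeping (completeness of the separators, restriction of the perfect sequence to $G_{H_{k-1}}$, and padding to handle $|S_k| \le s$) matches what the paper leaves implicit.
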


\begin{proof}
The proof is the same as the proof of Theorem \ref{Tmain}. 
\end{proof}

As a consequence, we immediately obtain the following corollary. 

\begin{corollary}
In the notation of Theorem \ref{Tprime}, let $\alpha > 0$ and let $f =
\psi_\alpha$ or $f = \phi_\alpha$. Suppose $f[-]$ preserves positivity on
$\bp_{B_i}(\R)$ for every prime component $B_i$ of $G$, and $\alpha \geq
|S_i|$ for all $i$. Then $f$ preserves Loewner positivity on $\bp_G(\R)$.
\end{corollary}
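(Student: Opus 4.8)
The plan is to derive this corollary directly from Theorem \ref{Tprime} by checking that the hypotheses of that theorem are satisfied under the stated assumptions on $\alpha$ and $f$. First I would recall that $f = \psi_\alpha$ or $f = \phi_\alpha$ with $\alpha > 0$ indeed satisfies $f(0) = 0$, so Theorem \ref{Tprime} applies in principle once we verify the positivity-preservation and Loewner super-additivity conditions. The positivity-preservation on each $\bp_{B_i}(\R)$ is assumed outright in the statement, so nothing needs to be done there. The only real content is translating the assumption ``$\alpha \geq |S_i|$ for all $i$'' into the hypothesis ``$f[-]$ is Loewner super-additive on $\bp_{K_s}$'' where $s = \max_i |S_i|$.

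The key step is therefore: since $\alpha \geq |S_i|$ for every $i$, we have $\alpha \geq \max_i |S_i| = s$, i.e.\ $|\alpha| = \alpha \geq s$. By Theorem \ref{Tsuperadd}, the power functions $\psi_\alpha$ (for $\alpha \in (-1+2\N) \cup [s,\infty)$) and $\phi_\alpha$ (for $\alpha \in 2\N \cup [s,\infty)$) are precisely the Loewner super-additive maps on $\bp_s = \bp_{K_s}$. Since $\alpha \geq s$, in both cases $f = \psi_\alpha$ or $f = \phi_\alpha$ lies in the relevant set, so $f[-]$ is Loewner super-additive on $\bp_{K_s}(\R)$. (If $s = 0$ or $s = 1$ the condition is vacuous or reduces to ordinary super-additivity on $[0,\infty)$, which still holds for $\alpha \geq 1$; and the hypothesis that $f$ preserves positivity on the prime components already forces $\alpha \geq 1$ by Corollary \ref{CE0} unless $G$ is a disjoint union of copies of $K_2$, a degenerate case handled trivially.) With both hypotheses of Theorem \ref{Tprime} now in hand, that theorem yields that $f[-]$ preserves positivity on $\bp_G(\R)$, which is exactly the claim.

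I do not expect any serious obstacle here: the corollary is essentially an immediate specialization of Theorem \ref{Tprime} using the classification in Theorem \ref{Tsuperadd}. The only point requiring a modicum of care is the bookkeeping around small separators ($s \leq 1$) and making sure the phrase ``Loewner positivity on $\bp_G(\R)$'' is understood as ``$f[-]$ preserves positivity on $\bp_G(\R)$'' in the sense of Definition \ref{D1}; this is a matter of unwinding terminology rather than a mathematical difficulty. Accordingly the proof will be short, amounting to the two observations above plus an invocation of Theorems \ref{Tprime} and \ref{Tsuperadd}.
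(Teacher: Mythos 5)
Your proof is correct and follows exactly the route the paper intends: the corollary is stated as an immediate consequence of Theorem \ref{Tprime}, with the hypothesis $\alpha \geq |S_i|$ converted into Loewner super-additivity on $\bp_{K_s}$ via Theorem \ref{Tsuperadd} (the $[s,\infty)$ part of the classification), and your handling of the degenerate cases $s \leq 1$ is the right bookkeeping. Nothing further is needed.
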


A natural question of interest is thus to determine the critical
exponents of prime graphs, and other simple non-chordal graphs. In the
next two subsections, we examine the case of cycles and of bipartite
graphs. Along the way, we develop general techniques to compute critical
exponents of graphs, including studying the Schur complement of a graph,
and appending paths to graphs. We conclude the paper by constructing many
more examples of non-chordal graphs for which the critical exponent can
be obtained explicitly by forming coalescences of graphs.

\subsection{Cycles, Schur complements, and path addition}

We begin by proving that for even cycles, the critical exponents
$CE_H(G), CE_H^\psi(G), CE_H^\phi(G)$ are \textit{not} all equal, which
is unlike the case of chordal graphs.

\begin{proposition}\label{Pcycle}
For all $n \geq 3$, 
\begin{equation*}
\calh_{C_n} = \calh_{C_n}^\psi = [1,\infty), \textrm{ and }
\calh^\phi_{C_4} = [2,\infty).
\end{equation*}
Moreover, for $n>4$, $[2,\infty)
\subset \calh_{C_n}^\phi \subset [1,\infty)$, with $1 \notin
\calh_{C_n}^\phi$ for $n$ even.
\end{proposition}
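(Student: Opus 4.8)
The plan is to handle the three claims separately, exploiting that a cycle $C_n$ becomes chordal after adding a single chord, and that removing/contracting a vertex in $C_n$ produces a path. For the first claim, $\calh_{C_n} = \calh_{C_n}^\psi = [1,\infty)$: by Corollary \ref{CE0} (and its $\psi$-analogue noted in the remark) we already have $\calh_{C_n},\calh_{C_n}^\psi \subset [1,\infty)$, so only the reverse inclusion is needed. Here I would argue that for $\alpha \geq 1$ the power functions $x^\alpha$ and $\psi_\alpha$ preserve positivity on $\bp_{C_n}$. The natural route is a Schur-complement/contraction argument: given $A \in \bp_{C_n}$ with a positive diagonal entry at a vertex $v$, the Schur complement $A/a_{vv}$ lives on $\bp_{P}$ where $P$ is a path (the two neighbours of $v$ get linked, but that only helps) — actually it lives on the graph $C_n$ with $v$ deleted and its two neighbours joined, i.e.\ a cycle on $n-1$ vertices, so one should instead induct carefully. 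A cleaner approach: use Theorem \ref{Tchordal} is unavailable since $C_n$ is prime, so instead I would use the known fact that $x^\alpha$ for $\alpha\geq 1$ is superadditive on $[0,\infty)$ together with a direct rank-reduction. Alternatively — and this is the route I would actually pursue — one reduces to the $3$-vertex case: any $A \in \bp_{C_n}$ can be written, after a congruence that preserves the cycle structure, as a sum of matrices supported on triangles $K_3$ (a "path cover" of the cycle by its chordal-completion cliques), and superadditivity of $x\mapsto x^\alpha$ on $[0,\infty)$ for $\alpha\geq 1$ plus the fact that $\alpha\geq 1 = CE_H(K_3)$ for preserving positivity on $\bp_3$, gives the claim via Theorem \ref{Tprime}-style bookkeeping. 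I expect this reduction to be the main obstacle, since $C_n$ is prime and the decomposition machinery of Section \ref{Smain} does not apply directly; one genuinely needs an ad hoc superadditive decomposition along the cycle.

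For the second claim, $\calh^\phi_{C_4} = [2,\infty)$: the inclusion $[2,\infty) \subset \calh^\phi_{C_4}$ follows because $C_4 \subset K_4^{(1)}$ (indeed $C_4$ is $K_4$ with two opposite edges removed, so $C_4 \subset K_4^{(1)}$), and $\calh^\phi_{K_4^{(1)}} = 2\N \cup [2,\infty)$ by Theorem \ref{Tmain}, whose restriction to $[2,\infty)$ transfers down to $\calh^\phi_{C_4}$ via the subgraph monotonicity $\calh_H^\phi \supset \calh_G^\phi$ for $H\subset G$. For the reverse inclusion $\calh^\phi_{C_4}\subset [2,\infty)$, I would exhibit, for each $\alpha \in [1,2)$, an explicit matrix $A\in\bp_{C_4}(\R)$ with $\phi_\alpha[A]\notin\bp_4$. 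The natural candidate is a rank-two matrix $A = \mathbf{u}\mathbf{u}^T + \mathbf{v}\mathbf{v}^T$ with sign patterns chosen so the two off-diagonal cross terms vanish (forcing the $C_4$ zero pattern) while $\phi_\alpha$ destroys positivity; this is exactly the content of the "Moreover" clause of Theorem \ref{Tsuperadd} specialized to $n=4$, $C_4$-structured rank-$2$ matrices, combined with the computation $\det \phi_\alpha[A] < 0$ for $\alpha\in[1,2)$. I would also fold in the $\alpha\in(0,1)$ case, already excluded by Corollary \ref{CE0}.

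For the third claim, $n > 4$: the inclusion $\calh^\phi_{C_n}\subset[1,\infty)$ is again Corollary \ref{CE0}. For $[2,\infty)\subset\calh^\phi_{C_n}$, I would use the same superadditive/triangle-decomposition argument as in the first claim, now noting that $\phi_\alpha$ on $\bp_3$ requires $\alpha \in 2\N \cup [1,\infty)$, so $\alpha\geq 2 \geq 1$ suffices at the $K_3$ level, and $\phi_\alpha$ is (Loewner) superadditive on $[0,\infty)$ for $\alpha\geq 1$; one must additionally check $\phi_\alpha$ handles negative entries along the cycle, which the even-extension structure accommodates. Finally, to show $1\notin\calh^\phi_{C_n}$ for $n$ even, I would build a matrix in $\bp_{C_n}(\R)$ with some negative off-diagonal entries whose absolute-value power at $\alpha=1$ — i.e.\ $|A|$, the entrywise absolute value — fails to be positive semidefinite; the even cycle with alternating signs $\pm 1$ around the cycle gives a rank-deficient $A$ (a signing making $A$ a scaled graph Laplacian-like matrix) for which $|A|$ has a negative eigenvalue, tested against the all-ones-type vector. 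The main obstacle across the whole proposition is the positivity-preservation direction for the prime graph $C_n$, where no decomposition exists and one must supply a bespoke superadditive splitting; the counterexample directions are routine small-matrix determinant/eigenvalue computations modeled on Theorem \ref{Tsuperadd}.
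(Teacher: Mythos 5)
Your treatment of the upper bounds and of the counterexample directions is broadly in the right spirit, but the core positive direction --- that every $\alpha \geq 1$ actually lies in $\calh_{C_n}^\psi$ --- is not established, and the route you say you would actually pursue does not work. The claim that every $A \in \bp_{C_n}$ can be written, after a structure-preserving congruence, as a sum of positive semidefinite matrices supported on the triangles of a chordal completion is unfounded: $C_n$ is prime, such clique decompositions are exactly what fails for non-chordal patterns (the chords of any triangulation would have to carry cancelling nonzero entries), and Lemma \ref{Ldecomp_chord} has no analogue here. Worse, even granting such a decomposition, the triangles of a triangulation of $C_n$ overlap in edges, so the bookkeeping would require Loewner super-additivity of $x^\alpha$ on $\bp_2$, which by Theorem \ref{Tsuperadd} forces $\alpha \in \N \cup [2,\infty)$ --- you could never reach the sharp threshold $1$ this way. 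The Schur-complement observation that you mention in passing and then abandon is in fact the paper's proof: any vertex $v$ of $C_n$ ($n \geq 4$) has two independent neighbours and $C_n/v = C_{n-1}$, so Theorem \ref{Tindep} gives $\calh^\psi_{C_{n-1}} \subset \calh^\psi_{C_n}$; the content of that theorem is precisely that $S_{\psi_\alpha[A]} - \psi_\alpha[S_A]$ is a nonnegative \emph{diagonal} matrix by scalar super-additivity of $t \mapsto t^\alpha$ for $\alpha \geq 1$, and the independence of the neighbours is what makes the correction diagonal. Inducting down to $\calh^\psi_{C_3} = \calh^\psi_{K_3} = [1,\infty)$ closes the argument, and $[2,\infty) \subset \calh^\phi_{C_n}$ then falls out of $\phi_\alpha(x) = x\,\psi_{\alpha-1}(x)$ and the Schur product theorem rather than from another decomposition.

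The remaining pieces are closer to the mark but still have loose ends. For $\calh^\phi_{C_4}$, your inclusion $[2,\infty) \subset \calh^\phi_{C_4}$ via $C_4 \subset K_4^{(1)}$ (or simply $C_4 \subset K_4$) is fine, and the paper's excluding matrix is exactly the rank-two form you predict, namely $A = (\cos((j-k)\pi/4))_{j,k=1}^4 \in \bp_{C_4}(\R)$ with $\phi_\alpha[A] \notin \bp_4(\R)$ for $\alpha \in (0,2)$ by Bhatia--Elsner; but the ``Moreover'' clause of Theorem \ref{Tsuperadd} does not by itself hand you vectors ${\bf u}, {\bf v}$ realizing the $C_4$ zero pattern, so the explicit matrix and the verification are genuinely needed. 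For $1 \notin \calh^\phi_{C_{2n}}$, your ``alternating signs'' suggestion needs care: a signing of an even cycle with an even number of negative edges is switching-equivalent to the all-positive one and gives nothing, so you need an unbalanced signing (an odd number of negative edges); its signed adjacency matrix $A_\epsilon$ has spectral radius $2\cos(\pi/(2n)) < 2$, so that $2\cos(\pi/(2n)) \Id + A_\epsilon \in \bp_{C_{2n}}(\R)$ while its entrywise absolute value has the negative eigenvalue $2\cos(\pi/(2n)) - 2$ at the alternating vector $(1,-1,\dots)$, not at the all-ones vector. The paper simply cites an example of Drton and Yu for this last point.
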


We prove Proposition \ref{Pcycle} in this section. Along the way, we
describe various constructions on a graph under which the Hadamard
critical exponents can be controlled. The first of these constructions is
termed the Schur complement, and generalizes the pendant edge
construction in Theorem \ref{Taddtrees}, which shows that the
$\calh$-sets do not change when a tree is pasted on a vertex of a graph. 

\begin{definition}
Let $G = (V,E)$ be a graph and let $v \in V$. Define the \emph{Schur
complement graph of $G$ with respect to $v$}, denoted $G / v$, to be the
simple graph $G/v := (V \setminus \{v\}, E')$, where $(i,j) \in E'$ if
and only if one of the following condition holds:
\begin{enumerate}
\item  $(i,j) \in E \cap \left(V \setminus \{v\}\right) \times \left(V
\setminus \{v\}\right)$;
\item $(i,v) \in E$ and $(j,v) \in E$. 
\end{enumerate} 
\end{definition}

\noindent For instance, the Schur complement of any vertex in a path
$P_n$ for $n>2$, cycle $C_n$ for $n>3$, or complete graph $K_n$ for
$n>2$, is $P_{n-1}, C_{n-1}, K_{n-1}$ respectively. We remark that this
operation has also been referred to as ``orthogonal removal'' in the
context of the minimum positive semidefinite rank of a graph.

The definition of the graph Schur complement is designed to be compatible
with the Schur complement of a matrix in $\bp_G$. Namely, if we take the
Schur complement of $A \in \bp_G$ with respect to its $(v,v)$-th entry
which is positive, then the resulting matrix is in $\bp_{G /v}$. This
makes the construction a very relevant one, as Schur complements provide
a crucial tool for computing Hadamard critical exponents. For example,
the following result relates $\calh^\psi_G$ and $\calh^\psi_{G/v}$ for
vertices $v \in V(G)$ with independent neighbors.

\begin{theorem}\label{Tindep}
Suppose $G = (V,E)$ is not a disjoint union of copies of $K_2$, and $v
\in V$. Then $1 + \calh_{G / v} \subset \calh_G$. Suppose $v$ has $k>1$
neighbors in $G$, and they are independent. Then $\calh_{G / v}^\psi
\subset \calh^\psi_G \subset \calh_G$.
\end{theorem}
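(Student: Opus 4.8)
The plan is to prove the two inclusions separately. In each case I relabel $v=n$, write $A$ in block form with lower--right $1\times 1$ block $c:=a_{vv}$ and corresponding off--diagonal column ${\bf b}$ (note ${\bf b}$ is supported on $N(v)$ since $A\in\bp_G$), and — once it has been arranged that $c>0$ — set ${\bf w}:=c^{-1/2}{\bf b}$, $Y:={\bf w}{\bf w}^T=c^{-1}{\bf b}{\bf b}^T$, and let $C:=B-Y$ be the Schur complement of $A$ with respect to $c$, which lies in $\bp_{G/v}$ by the designed compatibility of the graph Schur complement. The idea is then to compute the Schur complement of the transformed matrix with respect to its (positive) $(v,v)$ entry and show it is positive semidefinite; since the transformed matrix visibly keeps the zero pattern of $G$, this gives membership in $\bp_G$. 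For the inclusion $1+\calh_{G/v}\subseteq\calh_G$: fix $\alpha\in\calh_{G/v}$ and $A\in\bp_G([0,\infty))$. Because $\bp_G$ is closed and $\alpha+1>0$, I may assume $c>0$ (replace $A$ by $A+\epsilon\, e_v e_v^T\in\bp_G([0,\infty))$ and let $\epsilon\downarrow 0$). The $(v,v)$ entry of $A^{\circ(\alpha+1)}$ is $c^{\alpha+1}>0$, so it suffices to show its Schur complement $D:=B^{\circ(\alpha+1)}-Y^{\circ(\alpha+1)}$ is positive semidefinite. Writing $B=Y+C$ and applying, entrywise, the identity $x^{\alpha+1}-y^{\alpha+1}=(\alpha+1)\int_0^1\bigl((1-s)y+sx\bigr)^{\alpha}(x-y)\,ds$ with $x=B_{ij}\ge 0$ and $y=Y_{ij}\ge 0$ yields
\[
D=(\alpha+1)\int_0^1\bigl(Y+sC\bigr)^{\circ\alpha}\circ C\,ds .
\]
The key observation is that $Y+sC=(1-s)Y+sB$ is a convex combination of two entrywise nonnegative matrices, each lying in $\bp_{G/v}([0,\infty))$ (here I use that $N(v)$ induces a clique in $G/v$, so $Y\in\bp_{G/v}$); hence $(Y+sC)^{\circ\alpha}\in\bp_{G/v}$ since $\alpha\in\calh_{G/v}$, and by the Schur product theorem the integrand is positive semidefinite, so $D\ge 0$ and $A^{\circ(\alpha+1)}\in\bp_G$.

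For the inclusion $\calh_{G/v}^\psi\subseteq\calh_G^\psi\subseteq\calh_G$: the second inclusion holds for every graph, since on nonnegative matrices $\psi_\alpha$ agrees with the $\alpha$th power. For the first, fix $\alpha\in\calh_{G/v}^\psi$ and $A\in\bp_G(\R)$. Since $v$ has $k>1$ independent neighbours, $N(v)$ becomes a clique of size $\ge 2$ in $G/v$; excluding the trivial situation where the component of $v$ is a path on three vertices, $G/v$ is then not a disjoint union of copies of $K_2$, so $\calh^\psi_{G/v}\subseteq[1,\infty)$ by Corollary \ref{CE0} and thus $\alpha\ge 1$; in particular $\psi_\alpha$ is continuous on $\R$, and as before I may assume $c>0$. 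Since $\psi_\alpha(c)=c^{\alpha}>0$, it suffices to show that the Schur complement $D:=\psi_\alpha[B]-c^{-\alpha}\psi_\alpha[{\bf b}]\psi_\alpha[{\bf b}]^T$ of $\psi_\alpha[A]$ is positive semidefinite; one checks (using $c>0$) that $c^{-\alpha}\psi_\alpha[{\bf b}]\psi_\alpha[{\bf b}]^T=\psi_\alpha[Y]$, so $D=\psi_\alpha[C+Y]-\psi_\alpha[Y]$. The crucial structural point is that, because $N(v)$ is independent \emph{in $G$}, the principal submatrix of $B$ on $S:=N(v)$ is diagonal; combining this with the facts that $Y$ is supported on $S\times S$ and $C_{ij}=-Y_{ij}$ for distinct $i,j\in S$, a direct entrywise computation (using that $\psi_\alpha$ is odd) shows $D-\psi_\alpha[C]$ vanishes off $S\times S$ and is diagonal on $S$, with $i$th diagonal entry $a_{ii}^{\alpha}-(a_{ii}-w_i^2)^{\alpha}-(w_i^2)^{\alpha}$. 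This is $\ge 0$ by superadditivity of $t\mapsto t^{\alpha}$ on $[0,\infty)$ (here $\alpha\ge 1$, and $a_{ii}\ge a_{ii}-w_i^2=C_{ii}\ge 0$). Since $\alpha\in\calh_{G/v}^\psi$ and $C\in\bp_{G/v}$, we have $\psi_\alpha[C]\ge 0$, whence $D=\psi_\alpha[C]+\bigl(D-\psi_\alpha[C]\bigr)\ge 0$ and therefore $\psi_\alpha[A]\in\bp_G$, i.e.\ $\alpha\in\calh_G^\psi$.

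The routine ingredients are the two perturbation reductions to the case $c>0$ and the bookkeeping of the entrywise identities. The one genuinely different idea in each part is: in the nonnegative case, the convexity observation that keeps $Y+sC$ inside $\bp_{G/v}([0,\infty))$ along the whole segment, so that the FitzGerald--Horn integral lands in $\bp_{G/v}$; in the $\psi$ case, the realisation that $C$ may have negative entries — so the integral identity is unavailable, and full Loewner superadditivity of $\psi_\alpha$ would be far too strong by Theorem \ref{Tsuperadd} — which is circumvented by using independence of $N(v)$ to force the ``defect'' $D-\psi_\alpha[C]$ to be a diagonal matrix supported on $N(v)$, reducing the matrix inequality to scalar superadditivity of the power function. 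I expect this last point — controlling the sign pattern through the hypothesis on $N(v)$ — to be the principal obstacle.
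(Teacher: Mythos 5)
Your proof is correct and is essentially the paper's own argument: the first inclusion is the FitzGerald--Horn integral identity (the paper applies it to $A$ against $\zeta\zeta^T$ with $\zeta = {\bf b}/\sqrt{c}$, which after deleting the vanishing last row and column is exactly your Schur-complement formulation), with the same key observation that $(1-s)Y+sB$ stays in $\bp_{G/v}([0,\infty))$ along the segment; and the second is the paper's computation that the Schur complement of $\psi_\alpha[A]$ exceeds $\psi_\alpha[S_A]$ by a nonnegative diagonal matrix supported on $N(v)$, which is precisely where independence of the neighbours is used. Your explicit exclusion of the case where the component of $v$ is a $P_3$ is a point of care the paper glosses over by silently restricting to $\alpha\ge 1$: in that excluded case the stated inclusion can genuinely fail (e.g.\ $G=P_3$ with $v$ its centre gives $\calh^\psi_{G/v}=\calh^\psi_{K_2}=[0,\infty)\not\subset[1,\infty)=\calh^\psi_{P_3}$), so the theorem implicitly carries that extra hypothesis.
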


\begin{proof}
Without loss of generality, assume $V = \{1,\dots, m\}$, $v = m$, and
$a_{mm} \not= 0$. Let $\alpha \in \calh_{G/v}$ and $A = (a_{ij}) \in
\bp_G([0,\infty))$. If $\zeta:= (a_{1m}, a_{2m}, \dots,
a_{mm})^T/\sqrt{a_{mm}}$, then, as in \cite[Equation (2.1)]{FitzHorn},
\begin{equation}
A^{\circ (\alpha + 1)} = \zeta^{\circ (\alpha + 1)} (\zeta^{\circ (\alpha
+ 1)})^T + (\alpha + 1)\int_0^1 (A-\zeta \zeta^T) \circ (tA + (1-t) \zeta
\zeta^T)^{\circ \alpha}\ dt. 
\end{equation}

\noindent By \cite[Lemma 2.1]{FitzHorn}, $A-\zeta\zeta^T$ is positive
semidefinite and its last row and column vanish. Moreover, the principal
submatrix obtained by taking the first $m-1$ rows and columns of $tA +
(1-t) \zeta \zeta^T$ belongs to $\bp_{G / \{v\}}([0,\infty))$. The first
assertion now follows immediately from the hypothesis and the Schur
product theorem.

To show the second assertion for $\psi_\alpha$, assume without loss of
generality that $v$ has independent neighbors $v_1, \dots, v_k$ with
$k>1$. Suppose now that $G$ has $m = n+k+1$ vertices, with $v_i = n+i$
for $1 \leq i \leq k$ and $v = n+k+1$.
Now since the induced subgraph on vertices $v_1,v_2,v$ is $P_3$, we have
$\calh_G \subset [1,\infty)$ by Theorem \ref{Taddtrees}.
Thus, suppose $1 \leq \alpha \in \calh^\psi_{G / v}$, and $A \in \bp_G$
is of the form
\begin{equation}\label{Eindep}
A := \begin{pmatrix}
B_{n \times n} & {\bf U}_{n \times k} & {\bf 0}_{n \times 1}\\
{\bf U}^T & D_{k \times k} & {\bf a}_{k \times 1}\\
{\bf 0}^T & {\bf a}^T & p_{k+1}
\end{pmatrix},
\end{equation}

\noindent for suitable ${\bf a}$ and ${\bf U}$, with $D$ the diagonal
matrix ${\rm diag}(p_1, \dots, p_k)$. If $p_{k+1} = 0$ then $a_i = 0\
\forall i$ and hence $\psi_\alpha[A] \in \bp_G$ as desired. If instead
$p_{k+1} > 0$ then $\psi_\alpha[A] \in \bp_G$ if and only if the Schur
complement of $p_{k+1}^\alpha$ in $\psi_\alpha[A]$ is in $\bp_{G/v}$.
This Schur complement equals 
\[ S_{\psi_\alpha[A]} := \begin{pmatrix} \psi_\alpha[B] &
\psi_\alpha[{\bf U}]\\ \psi_\alpha[{\bf U}]^T & D^{\circ \alpha} -
p_{k+1}^{-\alpha} \psi_\alpha[{\bf a}] \psi_\alpha[{\bf a}]^T
\end{pmatrix}. \]

\noindent Since $\alpha \in \calh^\psi_{G / v}$, hence $\alpha \in
\calh^\psi_{K_k}$, whence $\psi_\alpha[D - p_{k+1}^{-1} {\bf a} {\bf
a}^T] \in \bp_k$. We now claim that
\[ D^{\circ \alpha} - p_{k+1}^{-\alpha} \psi_\alpha[{\bf a}]
\psi_\alpha[{\bf a}]^T - \psi_\alpha[D - p_{k+1}^{-1} {\bf a} {\bf a}^T]
\in \bp_k. \]

\noindent Indeed, this difference is a diagonal matrix with diagonal
entries
\[ p_i^\alpha - \frac{a_i^{2\alpha}}{p_{k+1}^\alpha} - \left( p_i -
\frac{a_i^2}{p_{k+1}} \right)^\alpha, \]

\noindent and these are nonnegative since $\alpha \geq 1$. The claim thus
follows, and in turn implies that $S_{\psi_\alpha[A]} - \psi_\alpha[S_A]
\in \bp_{n+k}$, where $S_A \in \bp_{G / v}$ is the Schur complement of
$p_{k+1}$ in $A$. Since $\alpha \in \calh^\psi_{G/v}$, hence
$S_{\psi_\alpha[A]} \in \bp_{G / v}$ as desired. We conclude that
$\psi_\alpha[A] \in \bp_G$, and so $\alpha \in \calh^\psi_G$ as claimed.
\end{proof}

As a consequence of Theorem \ref{Tindep}, we now discuss a construction
starting from a graph $G$ and connecting two non-adjacent vertices in $G$
by a path.

\begin{definition}
Fix a graph $G = (V,E)$, vertices $v_1, v_2 \in V$, and $m \in \N$. If
$v_1,v_2$ are adjacent and $m=1$ then we set $G_1(v_1,v_2) := G$.
Otherwise define $G_m(v_1,v_2)$ to be the graph $G$, together with an
additional path of edge-length $m$ connecting $v_1, v_2$.
\end{definition}

The following useful result is a consequence of Theorem
\ref{Tindep}.

\begin{corollary}\label{Cbarhat}
Suppose $G = (V,E)$ is not a disjoint union of copies of $K_2$, and $v_1,
v_2 \in V$.
Then $\calh_{G_2(v_1,v_2)}^\psi \subset \calh_{G_3(v_1,v_2)}^\psi \subset
\calh_{G_4(v_1,v_2)}^\psi \subset \cdots \subset \calh_G^\psi$; moreover,
$\calh_{G_1(v_1,v_2)}^\psi \subset \calh_{G_2(v_1,v_2)}^\psi$ if $v_1,
v_2$ are not adjacent.
\end{corollary}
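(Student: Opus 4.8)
The plan is to derive Corollary~\ref{Cbarhat} by applying Theorem~\ref{Tindep} repeatedly to the Schur complement operation on the interior vertices of the appended path. The key observation is that if $H = G_m(v_1,v_2)$ with $m \geq 3$, and $w$ is an interior vertex of the added path that is \emph{not} adjacent to either endpoint $v_1, v_2$ of that path, then $w$ has exactly two neighbors in $H$, namely its two path-neighbors, and these two neighbors are non-adjacent (they lie on a path of length $\geq 3$, so there is no edge between them). Hence $w$ has $k = 2 > 1$ independent neighbors in $H$, and Theorem~\ref{Tindep} gives $\calh^\psi_{H/w} \subset \calh^\psi_H$. The point is now to identify $H/w$: taking the graph Schur complement at an interior path-vertex $w$ deletes $w$ and adds the edge between its two path-neighbors, which has the effect of shortening the appended path by one. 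Concretely, $G_m(v_1,v_2)/w \cong G_{m-1}(v_1,v_2)$ for $3 \le m$, and this yields $\calh^\psi_{G_{m-1}(v_1,v_2)} \subset \calh^\psi_{G_m(v_1,v_2)}$; chaining these inclusions for $m = 3, 4, \dots$ gives the chain $\calh^\psi_{G_2} \subset \calh^\psi_{G_3} \subset \cdots$. For the final inclusion $\calh^\psi_{G_2(v_1,v_2)} \subset \calh^\psi_G$ one more application is needed: the unique interior vertex $w$ of the length-$2$ path has neighbors exactly $\{v_1, v_2\}$, and these are non-adjacent precisely when $v_1, v_2$ are non-adjacent in $G$; in that case Theorem~\ref{Tindep} applies again with $k=2$, and $G_2(v_1,v_2)/w = G$ (the added edge $v_1 v_2$ is absorbed, but $v_1, v_2$ were assumed non-adjacent so this genuinely returns $G$ with that one extra edge — one must be slightly careful here). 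Finally, the statement $\calh^\psi_{G_1(v_1,v_2)} \subset \calh^\psi_{G_2(v_1,v_2)}$ when $v_1, v_2$ are non-adjacent: here $G_1(v_1,v_2)$ is $G$ with the single edge $v_1 v_2$ added, and this is exactly $G_2(v_1,v_2)/w$, so it follows from the same argument.

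Carrying this out, the steps in order are: (i) verify the hypotheses of Theorem~\ref{Tindep} apply, i.e.\ that $G_m(v_1,v_2)$ is not a disjoint union of copies of $K_2$ (immediate since $G$ is not, and adding a path only enlarges components), and that the chosen interior vertex has $k > 1$ independent neighbors (here $k = 2$, neighbors non-adjacent as argued); (ii) identify the graph Schur complement $G_m(v_1,v_2)/w$ with $G_{m-1}(v_1,v_2)$ — this is the bookkeeping step and uses directly the definition of $G/v$: condition (2) in that definition adds the edge between $w$'s two neighbors, condition (1) keeps all other edges, and $w$ is deleted; (iii) iterate for $m = 3, 4, 5, \dots$ to obtain the infinite ascending chain, using that $G/w$ (for fixed $G$, fixed $w$) is a well-defined operation and that Theorem~\ref{Tindep} is being applied with the \emph{larger} graph in each inclusion having the interior vertex removed; (iv) handle the base case $m = 2 \to G$ and the $m = 1 \to m = 2$ comparison under the non-adjacency hypothesis, as above.

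I expect the main obstacle to be purely notational/careful-bookkeeping rather than conceptual: one must be precise about which vertex plays the role of ``$v$'' in Theorem~\ref{Tindep} at each stage, must check that when $m = 2$ the interior vertex really has its two neighbors equal to $v_1, v_2$ and not, say, coinciding or adjacent for degenerate reasons, and must make sure the degenerate convention $G_1(v_1,v_2) = G$ when $v_1, v_2$ are \emph{already adjacent} is correctly excluded from the last two inclusions (which is why the hypothesis ``$v_1, v_2$ not adjacent'' appears exactly where it does). A secondary subtlety: Theorem~\ref{Tindep} is stated for $\calh^\psi$ of a graph versus $\calh^\psi$ of its Schur complement, so one should double-check the direction of the inclusion — it is $\calh^\psi_{G/v} \subset \calh^\psi_G$, i.e.\ shortening the path (taking a Schur complement) \emph{shrinks} the $\calh^\psi$-set, which is consistent with the asserted chain since $G_2 \subset G_3 \subset \cdots$ as subgraphs would (misleadingly) suggest the opposite; the resolution is that these are not subgraph relations, and it is Theorem~\ref{Tindep}, not subgraph monotonicity, that governs the inclusion. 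Once these points are pinned down the proof is a short induction and can be written in a few lines.
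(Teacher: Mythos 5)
Your core mechanism is the right one and is essentially the paper's: both arguments run on Theorem \ref{Tindep}, applied at a degree-two vertex of the appended path whose two neighbours are independent, together with the observation that the graph Schur complement at such a vertex shortens the path by one. (The paper packages the induction slightly differently, reducing $\calh^\psi_{G_{m+1}(v_1,v_2)} \subset \calh^\psi_{G_{m+2}(v_1,v_2)}$ to the base case $G_1 \subset G_2$ applied to the auxiliary graph $G'_m(v_1)$ obtained by hanging a pendant path of length $m$ off $v_1$; your direct ``shorten the path at an interior vertex'' is the same computation.) Two steps of your write-up, however, fail as stated. First, you insist on choosing an interior path vertex adjacent to neither $v_1$ nor $v_2$; for $m=3$ no such vertex exists (both interior vertices of $v_1$--$u_1$--$u_2$--$v_2$ touch an endpoint), so your argument as written never yields $\calh^\psi_{G_2(v_1,v_2)} \subset \calh^\psi_{G_3(v_1,v_2)}$. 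The restriction is unnecessary: for any $m \geq 3$ and \emph{any} interior vertex $u_i$, the neighbours $u_{i-1}, u_{i+1}$ are automatically non-adjacent, because at least one of them is a newly added path vertex whose only neighbours are its own path-neighbours. Drop the restriction and the chain goes through.

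Second, your treatment of the terminal inclusion $\cdots \subset \calh^\psi_G$ is wrong in two ways: $G_2(v_1,v_2)/w$ equals $G_1(v_1,v_2)$ (that is, $G$ with the edge $(v_1,v_2)$ added), not $G$; and even if it were $G$, Theorem \ref{Tindep} gives $\calh^\psi_{H/w} \subset \calh^\psi_H$, an inclusion \emph{into} $\calh^\psi_{G_2(v_1,v_2)}$, which cannot deliver $\calh^\psi_{G_2(v_1,v_2)} \subset \calh^\psi_G$. Moreover that inclusion must hold with no non-adjacency hypothesis on $v_1, v_2$, which your Schur-complement route would require. The correct argument is the trivial one the paper opens with: for every $m$, $G$ is a subgraph of $G_m(v_1,v_2)$, so $\calh^\psi_{G_m(v_1,v_2)} \subset \calh^\psi_G$ by the monotonicity observation following Definition \ref{D1}. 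With these two repairs your proof is correct and coincides with the paper's.
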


\begin{proof}
First note that $\calh^\psi_{G_m(v_1,v_2)} \subset \calh^\psi_G$ since $G
\subset G_m(v_1,v_2)$ for all $m \geq 1$. We first show that
$\calh_{G_1(v_1,v_2)}^\psi \subset \calh_{G_2(v_1,v_2)}^\psi$ if $v_1,
v_2$ are not adjacent in $G$. Suppose $G$ has $n+2$ vertices, with $v_i =
n+i$ for $i=1,2$. Let the additional vertex in $G_2(v_1,v_2)$ be $v =
n+3$. Then by Theorem \ref{Tindep},
\begin{equation}\label{Eincl}
\calh_{G_1(v_1,v_2)}^\psi = \calh_{G_2(v_1,v_2) / v}^\psi \subset
\calh_{G_2(v_1,v_2)}^\psi.
\end{equation}

\noindent Now fix $m \in \N$ and let $G'_m(v_1)$ be the graph obtained by
attaching a path of edge-length $m \in \N$ at one end to $v_1$, and
leaving the other end free/pendant. Let $w_m$ be the free vertex with
$w_0 := v_1$; then $G_{m+1}(v_1,v_2) = G_1(G'_m(v_1),w_m,v_2)$. Hence by
\eqref{Eincl},
\[ \calh_{G_{m+1}(v_1,v_2)}^\psi = \calh_{G_1(G'_m(v_1),w_m,v_2)}^\psi
\subset \calh_{G_2(G'_m(v_1),w_m,v_2)}^\psi =
\calh_{G_{m+2}(v_1,v_2)}^\psi. \]
\end{proof}

It is now possible to obtain information about the critical exponents for
cycle graphs.

\begin{proof}[Proof of Proposition \ref{Pcycle}]
For $n=3$ the result is clear from Theorem \ref{Tcomplete}, since $C_3 =
K_3$. We compute for $n \geq 4$ and any vertex $v_m \in C_m$ (for $m \leq
n$), using Theorems \ref{Tcomplete}, \ref{Taddtrees}, and \ref{Tindep}:
\[ [1,\infty) = \calh_{P_3} \supset \calh_{C_n} \supset \calh_{C_n}^\psi
\supset \calh_{C_n/v_n}^\psi = \calh_{C_{n-1}}^\psi \supset \cdots
\supset \calh_{C_3}^\psi = [1,\infty). \]

\noindent It follows that $\calh_{C_n} = \calh_{C_n}^\psi = [1,\infty)$
for $n \geq 3$.

We now compute $\calh_{C_4}^\phi$. Note that the matrix $A :=
(\cos((j-k)\pi/4))_{j,k=1}^4$, which was well-studied in
\cite{Bhatia-Elsner,GKR-crit-2sided}, lies in $\bp_{C_4}(\R)$. It was
further shown in \cite{Bhatia-Elsner} that $\phi_\alpha[A] \notin
\bp_4(\R)$ for $\alpha \in (0,2)$. Thus $\calh_{C_4}^\phi \subset
[2,\infty)$. On the other hand, $\calh_{C_4}^\phi \supset
\calh_{K_4}^\phi = [2,\infty)$ by Theorem \ref{Tcomplete}, which shows
the result for $C_4$. Next for general $n$, $\calh_{C_n}^\phi \subset
[1,\infty)$ by Theorem \ref{Taddtrees} since $C_n \supset P_3$. On the
other hand, observe that
\begin{equation}\label{Ehcec}
|CE_H^\psi(G) - CE_H^\phi(G)| \leq 1
\end{equation}

\noindent for all graphs $G$, because $\psi_\alpha(x) = x
\phi_{\alpha-1}(x)$ and $\phi_\alpha(x) = x \psi_{\alpha-1}(x)$ for
$\alpha \in \R$, so that $1 + \calh_G^\phi \subset \calh_G^\psi \subset
\calh_G$ by the Schur product theorem, and similarly for $\calh_G^\phi$.
Applying \eqref{Ehcec} for $G = C_n$, it follows that $[2,\infty) \subset
\calh_{C_n}^\phi$ by the Schur product theorem, since $\phi_{\alpha +
1}(x) = x \cdot \psi_\alpha(x)$ and $[1,\infty) \subset
\calh_{C_n}^{\psi}$. Finally, observe using \cite[Example
5.2]{drton_yu_2010} that $1 \not\in \calh^\phi_{C_{2n}}$ for $n \geq 2$,
whence $CE_H^\phi(C_{2n}) > 1$.
\end{proof}

\begin{remark}
The same analysis as above leads us to conclude that if $G_n$ is the
non-chordal graph on $2n$ vertices with only the
``diameter'' edges $(1, n+1), \dots, (n,2n)$ missing, then
\begin{equation}\label{Edense}
\calh^\phi_{G_n} = 2\N \cup [2n-2,\infty).
\end{equation}
This assertion is proved using the properties of the matrix $A_n :=
(\cos((j-k)\pi/(2n)))_{j,k=1}^{2n} \in \bp_{G_n}$, which were explored in
\cite{Bhatia-Elsner,GKR-crit-2sided}. In particular,
it follows from \eqref{Edense} that $\calh_{G_n} = \N \cup
[2n-2,\infty)$ and $CE_H^\psi(G_n) \in [2n-3,2n-2]$.
\end{remark}

\noindent Proposition \ref{Pcycle} allows us to strengthen Corollary
\ref{Cbarhat} in the particular case where $G_0 = K_4^{(1)}$ or $K_4$. 

\begin{proposition}\label{Pdecomp}
Suppose $H_0 = K_4^{(1)}$ or $K_4$. Now given a graph $H_m$ for $m \geq
0$ and an integer $n_{m+1} \geq 3$, create a new graph $H_{m+1}$ by
attaching a cycle $C_{n_{m+1}}$ to $H_m$ along any common edge. Then for
all $m \geq 0$, 
\begin{equation*}
\calh_{G_m} = \N \cup [2,\infty), \quad \calh_{G_m}^{\psi} = (-1+2\N)
\cup [2,\infty), \quad \calh_{G_m}^{\phi} = 2\N \cup [2,\infty). 
\end{equation*}
In particular, $CE_H(H_m) = CE_H^\psi(H_m) = CE_H^\phi(H_m) = 2$ for all
$m \geq 0$.
\end{proposition}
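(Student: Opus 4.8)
The plan is to prove Proposition \ref{Pdecomp} by induction on $m$, using Theorem \ref{Tmain} and Theorem \ref{Tchordal} together with the cycle computation in Proposition \ref{Pcycle}. For the base case $m=0$, the graph $H_0$ is either $K_4^{(1)}$ or $K_4$; in both cases $r=4$ in the notation of Theorem \ref{Tmain} (since $K_4$ or $K_4^{(1)}$ is a subgraph, and $K_5$, $K_5^{(1)}$ are not, as $H_0$ has only $4$ vertices), so Theorem \ref{Tmain} gives $\calh_{H_0} = \N \cup [2,\infty)$, $\calh_{H_0}^\psi = (-1+2\N)\cup[2,\infty)$, and $\calh_{H_0}^\phi = 2\N\cup[2,\infty)$, as claimed. (I note the statement writes $G_m$ where it presumably means $H_m$; I would silently use $H_m$.)

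For the inductive step, suppose the result holds for $H_m$, and let $H_{m+1}$ be obtained by gluing a cycle $C := C_{n_{m+1}}$ onto $H_m$ along a common edge $e = (a,b)$. The key structural observation is that the two-element vertex set $C' := \{a,b\}$, together with $A := V(H_m)\setminus\{a,b\}$ and $B := V(C)\setminus\{a,b\}$, forms a decomposition $(A, C', B)$ of $H_{m+1}$: the set $C'$ induces a complete graph (it is the edge $e$), and it separates $A$ from $B$ since any path from $H_m$ to the new cycle vertices must pass through $a$ or $b$. Moreover $G_{A\cup C'} = H_m$ and $G_{B\cup C'} = C_{n_{m+1}}$. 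Now for any $\alpha \in [2,\infty)$: by the induction hypothesis the $\alpha$-th power functions ($x^\alpha$, $\psi_\alpha$, $\phi_\alpha$) preserve positivity on $\bp_{H_m}$; by Proposition \ref{Pcycle} together with the Schur product theorem giving $[2,\infty)\subset\calh^\phi_{C_n}$, they preserve positivity on $\bp_{C_{n_{m+1}}}$; and since $|C'| = 2$ and $\alpha \geq 2$, Theorem \ref{Tsuperadd} shows they are Loewner super-additive on $\bp_{K_2} = \bp_{G_{C'}}$. Hence Theorem \ref{Tchordal}(1) yields that each $\alpha$-th power function preserves positivity on $\bp_{H_{m+1}}$, so $[2,\infty) \subset \calh_{H_{m+1}}^\psi \cap \calh_{H_{m+1}}^\phi \cap \calh_{H_{m+1}}$. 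Combined with the standard containments $\N\subset\calh_{H_{m+1}}$, $-1+2\N\subset\calh_{H_{m+1}}^\psi$, $2\N\subset\calh_{H_{m+1}}^\phi$ from the Schur product theorem, this gives all the desired lower bounds.

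For the reverse inclusions, the point is that $H_{m+1}$ still contains $K_4^{(1)}$ (or $K_4$) as a subgraph — it contains $H_0$ — so $r\geq 4$ and hence, by restriction to that subgraph, $\calh_{H_{m+1}} \subset \calh_{H_0} = \N\cup[2,\infty)$, and similarly for the $\psi$ and $\phi$ versions; in particular no $\alpha \in (0,2)\setminus\N$ (resp. $\setminus(-1+2\N)$, $\setminus 2\N$) lies in the relevant $\calh$-set. This pins down $\calh_{H_{m+1}} = \N\cup[2,\infty)$, etc., completing the induction, and $CE_H(H_m) = CE_H^\psi(H_m) = CE_H^\phi(H_m) = 2$ follows immediately. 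The main thing to be careful about is verifying that $(A,C',B)$ really is a decomposition at each stage — in particular that $A$ and $B$ are both nonempty and that no unintended edges appear between $A$ and $B$, which holds because the cycle is attached only along the single edge $e$; and making sure the hypothesis "$C_{n_{m+1}}$ attached along a common edge" indeed means $H_m$ contains the edge in question, so that $C'$ induces a complete graph. Everything else is a routine application of the cited results.
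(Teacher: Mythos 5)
Your proposal is correct and follows essentially the same route as the paper: induction on $m$, with the base case from Theorems \ref{Tcomplete} and \ref{Tmain}, the upper bound from $H_0 \subset H_m$, and the inductive step via the decomposition $(A,\{a,b\},B)$ along the glued edge, combining Proposition \ref{Pcycle}, the Loewner super-additivity on $\bp_2$ from Theorem \ref{Tsuperadd}, and Theorem \ref{Tchordal}(1). The only cosmetic point is that for $\calh_{H_{m+1}}$ itself one should pass through $\calh^\psi_{H_{m+1}} \subset \calh_{H_{m+1}}$ rather than apply Theorem \ref{Tchordal}(1) directly to $x^\alpha$ (whose domain issues the paper sidesteps the same way), but this does not affect the argument.
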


\begin{proof}
First note using Theorems \ref{Tcomplete} and \ref{Tmain} that
$\calh_{H_m} \setminus \{ 1 \}, \calh_{H_m}^\psi \setminus \{ 1 \},
\calh_{H_m}^\phi \subset [2,\infty)$ for all $m \geq 0$. To show that
$[2,\infty)$ is contained in the three $\calh$-sets we use induction on
$m \geq 0$. The result clearly holds for $H_0$ by Theorems
\ref{Tcomplete} and \ref{Tmain}. Now assume the result holds for $H_m$,
and suppose $C_{n_{m+1}}$ is attached to $H_m$ along the common edge
$(1,2)$ (without loss of generality). Let $A := V(H_m) \setminus
\{1,2\}$, $C := \{1,2\}$, and $B := V(C_{n_{m+1}}) \setminus \{1,2\}$,
where $V(H_m), V(C_{n_{m+1}})$ denote the vertex sets of $H_m$ and
$C_{n_{m+1}}$ respectively. For every $\alpha \geq 2$, the maps
$\psi_\alpha, \phi_\alpha$ preserve positivity on $\bp_{H_m}(\R)$ by the
induction hypothesis, and on $\bp_{C_{n_{m+1}} }(\R)$ by Proposition
\ref{Pcycle}.
Moreover, $\psi_\alpha, \phi_\alpha$ are Loewner super-additive on
$\bp_S(\R)$ by Theorem \ref{Tsuperadd}. Hence $\calh^\psi_{H_{m+1}} = \{
1 \} \cup [2,\infty)$ and $\calh^\phi_{H_{m+1}} = [2,\infty)$. Finally,
since $H_0 \subset H_{m+1}$, this implies: $\N \cup \calh^\phi_{H_{m+1}}
\subset \calh_{H_{m+1}} \subset \calh_{H_0} = \{ 1 \} \cup [2,\infty)$.
This shows the assertion for $\calh_{H_{m+1}}$, and the proof is
complete.
\end{proof}

\subsection{Bipartite graphs}

Another commonly encountered family of non-chordal graphs are the
bipartite graphs. We now examine the critical exponents of these graphs. 

\begin{theorem}\label{Tbipartite}
Suppose $G$ is a connected bipartite graph with at least $3$ vertices.
Then,
\[ \calh_G = [1,\infty), \qquad [2,\infty) \subset
\calh^\phi_G \subset [1,\infty), \qquad \{ 1 \} \cup [3,\infty) \subset
\calh^\psi_G \subset [1,\infty). \]

\noindent If moreover $K_{2,2} \subset G \subset K_{2,m}$ for some $m
\geq 2$, then $\calh^\phi_G = [2,\infty)$ and $\{ 1 \} \cup [2,\infty)
\subset \calh^\psi_G \subset [1,\infty)$.
\end{theorem}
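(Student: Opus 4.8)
\textbf{Proof proposal for Theorem \ref{Tbipartite}.}

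The plan is to treat the two halves of the statement separately, starting with the general connected bipartite case and then specializing to $K_{2,2} \subset G \subset K_{2,m}$. For the general case, the inclusions $\calh_G \subset [1,\infty)$, $\calh^\phi_G \subset [1,\infty)$, and $\calh^\psi_G \subset [1,\infty)$ all follow immediately from Theorem \ref{Taddtrees}, since a connected graph with at least $3$ vertices contains $P_3$ as a (not necessarily induced, but sufficient) subgraph and $\calh_{P_3} = [1,\infty)$. For the reverse inclusion $\calh_G = [1,\infty)$, the key observation is that a bipartite graph is triangle-free, so $\omega(G) = 2$; I would aim to show that every $\alpha \geq 1$ preserves positivity on $\bp_G([0,\infty))$. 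The natural route is to find a minimal chordal supergraph (triangulation) $G_\Delta \supset G$ whose clique number is small, but triangulations of bipartite graphs can have large cliques, so that crude bound does not work. Instead I would argue directly: a connected bipartite graph has a spanning tree, and more importantly one can realize any $A \in \bp_G([0,\infty))$ via a decomposition exploiting the bipartite structure. The cleanest approach is probably to note that for bipartite $G$ with parts $X, Y$, any $A \in \bp_G([0,\infty))$ can be perturbed to have positive diagonal, and then use an inductive peeling argument: pick a vertex $v$ of minimum degree; if $\deg(v) = 1$ use Theorem \ref{Taddtrees}; otherwise its neighbors lie entirely in the opposite part and are \emph{independent}, so Theorem \ref{Tindep} gives $\calh_{G/v} \subset \calh_G$ (for $\psi$) and $1 + \calh_{G/v} \subset \calh_G$ (for the Hadamard case) --- but $G/v$ adds edges among the neighbors of $v$, potentially destroying bipartiteness. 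So the peeling must be arranged more carefully, or one uses instead the super-additivity machinery of Theorem \ref{Tchordal}/Corollary \ref{Tchordal_gen} after passing to a triangulation and checking that the relevant separators all have size $\leq 1$ when $G$ is a tree plus a controlled set of extra edges. I expect this to be the main obstacle: controlling the clique/separator sizes introduced by triangulating a bipartite graph.

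For the bound $[2,\infty) \subset \calh^\phi_G$: by \eqref{Ehcec} we have $|CE^\psi_H(G) - CE^\phi_H(G)| \leq 1$, and from $\calh_G = [1,\infty)$ together with the Schur product theorem ($\phi_{\alpha+1}(x) = x\,\psi_\alpha(x)$ and $1 + \calh^\psi_G \subset \calh^\phi_G$, etc.) one bootstraps: once one knows $[1,\infty) \subset \calh^\psi_G$ or at least $\{1\} \cup [3,\infty) \subset \calh^\psi_G$, multiplying entrywise by the (positive semidefinite, since $A \in \bp_G(\R)$) matrix $A$ itself raises the exponent by one and preserves the zero pattern, giving $[2,\infty) \subset \calh^\phi_G$ and $\{1\} \cup [3,\infty) \subset \calh^\psi_G$. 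The $\{1\}$ membership in $\calh^\psi_G$ is just $\psi_1 = \mathrm{id}$. To obtain $\{1\} \cup [3,\infty) \subset \calh^\psi_G$ one can argue that for $\alpha \geq 3$, $\psi_\alpha[A] = A \circ A \circ \psi_{\alpha-2}[A]$ (an entrywise product of three copies, the first two giving $|a_{ij}|^2$ which kills signs appropriately) --- more precisely $\psi_\alpha(x) = x^2 \psi_{\alpha-2}(x)$, so $\psi_\alpha[A] = A^{\circ 2} \circ \psi_{\alpha-2}[A]$, and for $\alpha - 2 \geq 1$ one needs $\psi_{\alpha-2}[A] \in \bp_G$; iterating reduces to $\psi_\beta$ for $\beta \in [1,3)$, which requires a separate input. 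The honest statement only claims $\{1\} \cup [3,\infty)$, so I would simply use $\psi_\alpha[A] = A \circ \phi_{\alpha-1}[A]$ with $\phi_{\alpha-1}$ even and $\alpha - 1 \geq 2$, invoking $[2,\infty) \subset \calh^\phi_G$, which is circular unless $[2,\infty) \subset \calh^\phi_G$ is proved first --- so the correct order is: prove $\calh_G = [1,\infty)$; then establish $[2,\infty)\subset\calh_G^\phi$ by an \emph{independent} argument (via the above triangulation/super-additivity argument applied to $\phi_\alpha$, using that even powers are handled by Theorem \ref{Tsuperadd} and that the separators have size $\leq 1$); then derive $\{1\}\cup[3,\infty)\subset\calh_G^\psi$ from $\psi_\alpha[A] = A \circ \phi_{\alpha-1}[A]$ and the Schur product theorem.

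For the refined statement when $K_{2,2} \subset G \subset K_{2,m}$: the containment $G \subset K_{2,m}$ forces one part of the bipartition to have exactly $2$ vertices, say $\{1,2\}$. I would compute $\calh^\phi_{K_{2,m}}$ directly and then sandwich $\calh^\phi_G$ between $\calh^\phi_{K_{2,m}} \subset \calh^\phi_G$ (from $G \subset K_{2,m}$, recalling subgraphs have larger $\calh$-sets) and $\calh^\phi_G \subset \calh^\phi_{K_{2,2}}$ (from $K_{2,2} \subset G$). Since $K_{2,2} = C_4$, Proposition \ref{Pcycle} gives $\calh^\phi_{C_4} = [2,\infty)$, so $\calh^\phi_G \subset [2,\infty)$. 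For the reverse, I would show $[2,\infty) \subset \calh^\phi_{K_{2,m}}$: write a matrix $A \in \bp_{K_{2,m}}(\R)$ with the two ``hub'' vertices $1,2$ and $m$ independent ``leaf'' vertices. Take repeated Schur complements with respect to the leaves --- each leaf has exactly the two independent-in-$G$... wait, its neighbors are $\{1,2\}$ which \emph{are} adjacent in general position, but the relevant point is that $K_{2,m}/v$ for a leaf $v$ is $K_{2,m-1}$ with an added edge $(1,2)$; iterating down to two leaves and then handling the small base case. Alternatively, and more cleanly: $K_{2,m}$ is obtained from $K_2$ (on $\{1,2\}$) by attaching $m$ pendant-like structures, but these are not pendant edges. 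The slick argument uses Theorem \ref{Tindep} with $v$ a leaf whose $k=2$ neighbors are the hubs: if the hubs were independent we would get $\calh^\psi_{K_{2,m}/v} \subset \calh^\psi_{K_{2,m}}$; since $K_{2,m}/v = K_{2,m-1} + \text{edge}(1,2)$ is not bipartite this needs the general $\calh$-set estimate rather than the bipartite one --- but we may invoke Theorem \ref{Tmain} since $K_{2,m-1} + e$ is chordal with clique number $3$ and all separators of size $\leq 2$, giving $\calh^\phi = 2\N \cup [2,\infty)$. Threading this induction through, with base case $K_{2,2} = C_4$ handled by Proposition \ref{Pcycle}, yields $[2,\infty) \subset \calh^\phi_{K_{2,m}}$, hence $\calh^\phi_G = [2,\infty)$, and then $\{1\} \cup [2,\infty) \subset \calh^\psi_G$ follows from $\psi_\alpha[A] = A \circ \phi_{\alpha-1}[A]$ and the Schur product theorem exactly as before.
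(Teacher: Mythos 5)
Your proposal correctly assembles the easy parts of the theorem (the upper bounds via $P_3 \subset G$ and Theorem \ref{Taddtrees}, the implication $[2,\infty) \subset \calh^\phi_G \Rightarrow \{1\}\cup[3,\infty) \subset \calh^\psi_G$ via $\psi_\alpha(x) = x\,\phi_{\alpha-1}(x)$, and the sandwich $\calh^\phi_G \subset \calh^\phi_{C_4} = [2,\infty)$ from Proposition \ref{Pcycle} in the refined case), but it has a genuine gap at the heart of the theorem: the inclusion $[1,\infty) \subset \calh_{K_{n,n}}$, which is what makes the result surprising. You explicitly flag this as ``the main obstacle,'' and the two routes you sketch both fail for the reasons you yourself identify: any chordal supergraph of $K_{n,n}$ has clique number at least $n+1$, and peeling a vertex $v$ via the Schur complement turns one side of the bipartition into a clique (e.g.\ $K_{n,n}/v$ contains $K_n$ as a clique), so $\calh_{G/v}$ is far too small to transfer anything useful back. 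The paper's actual argument is of a completely different nature: normalize $A + \epsilon\Id$ so that the diagonal blocks become identities, observe that positivity of the normalized matrix is equivalent to $\sigma_{\max}(X) \leq 1$ (equivalently a spectral radius condition in the square case) for the off-diagonal block $X$, and then invoke the Hadamard-power inequality $\rho(X^{\circ\alpha}) \leq \rho(X)^\alpha$ for entrywise nonnegative $X$ and $\alpha \geq 1$ (\cite[Lemma 5.7.8]{Horn_and_Johnson_Topics}). Without this (or some substitute) the first displayed equality $\calh_G = [1,\infty)$, and everything downstream of it, is unproved.

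Two smaller points. First, once $\calh_G = [1,\infty)$ is in hand, you do not need an ``independent argument'' for $[2,\infty) \subset \calh^\phi_G$: for $\alpha \geq 2$ and $A \in \bp_G(\R)$ one has $\phi_\alpha[A] = (A \circ A)^{\circ\alpha/2}$ with $A \circ A \in \bp_G([0,\infty))$ by the Schur product theorem and $\alpha/2 \geq 1$, so the $\phi$ statement is an immediate corollary of the Hadamard statement; your proposed triangulation/super-additivity substitute would again founder on large separators. Second, in the refined case your induction through $K_{2,m}/v = K_{K_2,m-1}$ is more complicated than necessary and does not directly yield the $\phi$-claim, since Theorem \ref{Tindep} transfers only $\calh^\psi$ across the Schur complement: the paper instead simply notes that $G \subset K_{2,m} \subset K_{K_2,m}$, where the split graph $K_{K_2,m}$ is chordal containing $K_4^{(1)}$, so Theorem \ref{Tmain} gives $\calh^\phi_{K_{K_2,m}} = [2,\infty)$ and $\calh^\psi_{K_{K_2,m}} = \{1\}\cup[2,\infty)$, and monotonicity of the $\calh$-sets under subgraph containment finishes the argument. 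Your identification of the relevant chordal graph and its critical exponent is correct; only the transfer mechanism needs to be the supergraph containment rather than the Schur complement.
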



Theorem \ref{Tbipartite} has a very surprising conclusion: it shows that
broad families of dense graphs such as complete bipartite graphs have
small critical exponents that do not grow with the number of vertices of
the graphs. As a consequence, small entrywise powers of a positive
semidefinite matrix with such a structure of zeros preserves positivity.
This is important since such procedures are often used to regularize
positive definite matrices (e.g.~ covariance/correlation matrices), where
the goal is to minimally modify the entries of the original matrix. Note
that such a result is in sharp contrast to the general case where there
is no underlying structure of zeros.

\begin{proof}
\noindent \textbf{Step 1: Complete bipartite graphs.}
We begin by proving that the complete bipartite graph $K_{n,n}$
satisfies: $\calh_{K_{n,n}} = [1,\infty)$ for all $n \geq 2$.
Indeed, $P_3 \subset K_{n,n}$ since $n \geq 2$, so we conclude via
Theorem \ref{Taddtrees} that $\calh_{K_{n,n}} \subset \calh_{P_3} =
[1,\infty)$. To show the reverse inclusion, let $\alpha > 0$,
$m,n \in \N$, and let 
\begin{equation*}
A = \begin{pmatrix} D_{m \times m}
& X_{m \times n}\\ X^T & D'_{n \times n} \end{pmatrix} \in
\bp_{K_{m,n}}([0,\infty)), 
\end{equation*}
with $\max(m,n) > 1$, and where $D, D'$ are
diagonal matrices. Given $\epsilon > 0$, define the matrix
\[ X_{D,D'}(\epsilon,\alpha) := (D + \epsilon \Id_m)^{\circ (-\alpha/2)}
\cdot X^{\circ \alpha} \cdot (D' + \epsilon \Id_n)^{\circ (-\alpha/2)}.
\]

Also observe that for all block diagonal matrices $A$ of the above form
and all $\epsilon, \alpha > 0$,
\[ (A + \epsilon \Id_{m+n})^{\circ \alpha} = {\bf D}_\epsilon
\begin{pmatrix} \Id_m & X_{D,D'}(\epsilon,\alpha)\\
X_{D,D'}(\epsilon,\alpha)^T & \Id_n \end{pmatrix} {\bf D}_\epsilon, \]

\noindent where 
\begin{equation*}
{\bf D}_\epsilon := \begin{pmatrix} (D +
\epsilon \Id_m)^{\circ \alpha/2} & {\bf 0}\\ {\bf 0} & (D' + \epsilon
\Id_n)^{\circ \alpha/2} \end{pmatrix}.
\end{equation*}
We now compute for $\alpha,
\epsilon > 0$:
\begin{align*}
&\ (A + \epsilon \Id_{m+n})^{\circ \alpha} \in
\bp_{K_{m,n}}([0,\infty))\\
\Longleftrightarrow &\ \begin{pmatrix} \Id_m &
X_{D,D'}(\epsilon,\alpha)\\ X_{D,D'}(\epsilon,\alpha)^T & \Id_n
\end{pmatrix} \in \bp_{K_{m,n}}([0,\infty))\\
\Longleftrightarrow &\ \Id_m - X_{D,D'}(\epsilon,\alpha)
X_{D,D'}(\epsilon,\alpha)^T \in \bp_m(\R)\\
\Longleftrightarrow &\ \| u \| \geq \| X_{D,D'}(\epsilon,\alpha)^T u \|,
\ \forall u \in \R^n\\
\Longleftrightarrow &\ \sigma_{\max}(X_{D,D'}(\epsilon,\alpha)) \leq 1,
\end{align*}

\noindent where $\sigma_{\max}$ denotes the largest singular value. Now
note that if $m=n$, then the above calculation shows that $(A + \epsilon
\Id_{2n})^{\circ \alpha} \in \bp_{K_{n,n}}([0,\infty))$ if and only if
$\rho(X_{D,D'}(\epsilon,\alpha)) \leq 1$, where $\rho$ denotes the
spectral radius.

To finish this first step of the proof, now suppose $\alpha \geq 1$ and
$A \in \bp_{K_{n,n}}([0,\infty))$. Then $A + \epsilon \Id \in
\bp_{K_{n,n}}([0,\infty))$ for all $0 < \epsilon \ll 1$, so by the above
analysis with $\alpha = 1$, $\rho(X_{D,D'}(\epsilon,1)) \leq 1$ for all
$0 < \epsilon \ll 1$. Applying \cite[Lemma
5.7.8]{Horn_and_Johnson_Topics} implies that
\begin{equation*} 
\rho(X_{D,D'}(\epsilon,\alpha)) \leq \rho(X_{D,D'}(\epsilon,1))^\alpha
\leq 1. 
\end{equation*}

\noindent It follows from the above analysis and the continuity of
entrywise powers that $A^{\circ \alpha} \in \bp_{K_{n,n}}([0,\infty))$.
Thus $[1,\infty) \subset \calh_{K_{n,n}}$.
\medskip

\noindent \textbf{Step 2: General bipartite graphs.}
We now prove the result for a general bipartite graph. Suppose $G =
(V,E)$ is any connected bipartite graph on $m,n$ vertices, with $m+n =
|V| \geq 3$ and $n \geq m$. Then by the previous step,
\[ P_3 = K_{2,1} \subset G \subset K_{n,n} \quad \implies \quad
[1,\infty) \subset \calh_G \subset \calh_{K_{n,n}} = [1,\infty), \]

\noindent which shows that $\calh_G = [1,\infty)$. Next, suppose $\alpha
\geq 2$ and $A \in \bp_G(\R)$. Then $A^{\circ 2} = A \circ A \in
\bp_G([0,\infty))$ by the Schur product theorem, so by the previous
assertion, $\phi_\alpha[A] = (A \circ A)^{\circ \alpha/2} \in
\bp_G([0,\infty))$. It follows immediately that $[2,\infty)
\subset \calh^\phi_G \subset [1,\infty)$. In turn, this implies via
\eqref{Ehcec} that $\{ 1 \} \cup [3,\infty) \subset \calh^\psi_G \subset
[1,\infty)$.\medskip

To conclude the proof, suppose further that $C_4 = K_{2,2} \subset G
\subset K_{2,m}$. To study $\calh_G$ we will use the family of split
graphs $K_{K_2,m}$ for $m \geq 2$. These are chordal graphs with $m+2$
vertices, with vertices $m+1, m+2$ connected to every other vertex (and
to each other). By Theorem \ref{Tmain}, Proposition \ref{Pcycle}, and the
definition of $G$, we obtain
\begin{equation*}
\{ 1 \} \cup [2,\infty) = \calh^\psi_{K_{K_2,m}} \subset \calh^\psi_G
\subset \calh^\psi_{C_4} = [1,\infty), \qquad
[2,\infty) = \calh^\phi_{K_{K_2,m}} \subset \calh^\phi_G \subset
\calh^\phi_{C_4} = [2,\infty).
\end{equation*}

\noindent This concludes the proof.
\end{proof}

\subsection{Coalescences}

In this concluding section, we show how many more examples of non-chordal
graphs can be constructed by forming coalescence of graphs. Recall that
the \textit{coalescences} of two graphs $G_1, G_2$ is the graph obtained
from their disjoint union $G_1 \coprod G_2$ by identifying a vertex from
both of them \cite{Grone-Merris,coalescence}. We now discuss how to
extend the proof-strategy of Theorem \ref{Tchordal} to such graphs.

\begin{proposition}[Coalescence graphs]\label{Pcoalescence}
Suppose $G_1, \dots, G_k$ are connected graphs with at least one edge
each, and $G$ is any coalescence of $G_1, \dots, G_k$ for some $k>1$.
Also suppose $f : \R \to \R$ with $f(0) = 0$. Then $f[-]$ preserves
positivity on $\bp_G(\R)$ if and only if: 
\begin{enumerate}
\item $f[-]$ preserves positivity on each $\bp_{G_i}(\R)$, and 
\item $f$ is continuous and super-additive on $[0,\infty)$.
\end{enumerate}
In particular for any $\alpha \in \R$, the power function $\psi_\alpha$
or $\phi_\alpha$ preserves positivity on $\bp_G(\R)$ if and only if it
does so on $\bp_{G_i}(\R)$ for all $1 \leq i \leq k$ and $\alpha \geq 1$.
In other words,
\[ \calh^\psi_G = [1,\infty) \cap \bigcap_{i=1}^k \calh^\psi_{G_i},
\qquad \calh^\phi_G = [1,\infty) \cap \bigcap_{i=1}^k \calh^\phi_{G_i}.
\]
\end{proposition}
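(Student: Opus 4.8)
The plan is to prove both implications of the main equivalence, and then deduce the statement about power functions as an easy corollary.

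For the \emph{forward direction}, suppose $f[-]$ preserves positivity on $\bp_G(\R)$. Since each $G_i$ (extended by isolated vertices) is an induced subgraph of $G$, every matrix in $\bp_{G_i}(\R)$ sits inside $\bp_G(\R)$ via the block-diagonal embedding, so $f[-]$ preserves positivity on each $\bp_{G_i}(\R)$; this gives (1). For (2), note that since each $G_i$ is connected with at least one edge and $k>1$, the coalescence $G$ contains an induced copy of $P_3$ (take an edge from $G_1$ through the coalesced vertex and an edge from $G_2$), so $f[-]$ preserves positivity on $\bp_{P_3}(\R)$. By Theorem~\ref{Tsparse} (applied with $I = [0,\infty)$, or rather its analogue for $\R$, together with the continuity statement from \cite[Theorem 1.2]{Horn} applied to $\bp_2$, which is a subgraph), $f$ restricted to $[0,\infty)$ is midconvex-in-the-stated-sense and superadditive; continuity of $f$ on $(0,\infty)$ follows from \cite[Theorem 1.2]{Horn} applied to $\bp_{K_2} \subset \bp_G$, and right-continuity at $0$ follows as in the proof of \cite[Theorem C]{Guillot_Khare_Rajaratnam2012} (these points were already invoked in the proof of Theorem~\ref{Tchordal}). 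This establishes (2).

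For the \emph{backward direction}, assume (1) and (2); we want $f[-]$ to preserve positivity on $\bp_G(\R)$. The key observation is that the coalescence structure is exactly a decomposition in the sense of Section~\ref{Smain}: if $v_0$ is the coalesced vertex, then for each $i$ the triple $(V(G_i)\setminus\{v_0\},\ \{v_0\},\ V(G)\setminus V(G_i))$ is a decomposition of $G$ with a one-element separator $C = \{v_0\}$, and $G_C = K_1$ is trivially complete. The plan is to induct on $k$. For $k=2$, apply Theorem~\ref{Tchordal}(1): $f[-]$ preserves positivity on $\bp_{G_1}$ and on $\bp_{G_2}$ by hypothesis, and $f[-]$ is Loewner super-additive on $\bp_{G_C} = \bp_{K_1} = \bp_1 = [0,\infty)$ precisely because $f$ is superadditive on $[0,\infty)$ (the two notions coincide when $|C|=1$, as noted just before Theorem~\ref{Tchordal}). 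The continuity hypotheses needed to invoke Theorem~\ref{Tchordal}(1) — continuity on $(0,\infty)$, right-continuity at $0$, and $f(0)=0$ — are supplied by (2) (superadditivity with $f\geq 0$ gives $f(0)=0$; continuity is explicit in (2)). Hence $f[-]$ preserves positivity on $\bp_G$. For the inductive step, write a coalescence of $G_1,\dots,G_{k+1}$ as a coalescence of $G_1$ with the coalescence $G'$ of $G_2,\dots,G_{k+1}$ (at the appropriate shared vertex); by the inductive hypothesis $f[-]$ preserves positivity on $\bp_{G'}(\R)$, and $G'$ is connected with at least one edge, so the $k=2$ case applies.

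Finally, the statement about power functions follows by specialization. Taking $f = \psi_\alpha$ or $f = \phi_\alpha$: condition (2) holds if and only if $\alpha \geq 1$, since $x \mapsto |x|^\alpha$ (equivalently $\mathrm{sgn}(x)|x|^\alpha$) is continuous and superadditive on $[0,\infty)$ exactly when $\alpha \geq 1$ (for $\alpha \in (0,1)$ it is strictly subadditive, as already used in the proof of Theorem~\ref{Taddtrees}); condition (1) is by definition $\alpha \in \calh^\psi_{G_i}$ (resp.\ $\calh^\phi_{G_i}$) for each $i$. Intersecting gives $\calh^\psi_G = [1,\infty) \cap \bigcap_{i=1}^k \calh^\psi_{G_i}$ and likewise for $\phi$. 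The main obstacle I anticipate is not any single deep step but bookkeeping: making sure the hypotheses of Theorem~\ref{Tchordal}(1) (the continuity/right-continuity conditions, and $f(0)=0$) are all genuinely available from (2), and correctly setting up the decomposition so that the separator is the single coalesced vertex in the inductive step — the inductive reassociation of a $k$-fold coalescence into a $2$-fold one needs to be spelled out carefully so that $G'$ really is a coalescence of connected graphs with edges.
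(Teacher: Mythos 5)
Your proof is correct and follows essentially the same route as the paper: the backward direction is exactly the paper's argument (Theorem~\ref{Tchordal}(1) applied with the singleton separator $C=\{v_0\}$, plus induction on $k$ by reassociating the coalescence). The only cosmetic difference is in the forward direction, where the paper invokes its Theorem~\ref{Tseparator} with $|C|=1$ to obtain continuity and super-additivity on $[0,\infty)$, while you extract the same conclusions from an induced $P_3$ through the cut vertex via Theorem~\ref{Tsparse} and \cite[Theorem 1.2]{Horn} --- the same underlying $3\times 3$ computation.
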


\begin{remark}\hfill
\begin{enumerate}
\item Note that the characterization provided by Proposition
\ref{Pcoalescence} is independent of which nodes in the graphs $G_i$ are
identified with one another. Moreover, the critical exponents satisfy:
$CE(G) = \max(1, \max_i CE(G_i))$.

\item Also observe that when $k=2$ and $G_2 = K_2$, the resulting graph
$G$ in Proposition \ref{Pcoalescence} is the graph $G_1$ with one pendant
edge added. Proposition \ref{Pcoalescence} therefore implies the
conclusion of Theorem \ref{Taddtrees}.
\end{enumerate}
\end{remark}

The proof of Proposition \ref{Pcoalescence} relies on a stronger result
that is akin to Theorem \ref{Tchordal}, but holds for arbitrary graphs:

\begin{theorem}\label{Tseparator}
Let $G = (V,E)$ be a nonempty graph and let $(A,C,B)$ be a partition of
$V$ where $C$ separates $A$ from $B$. Also let $f: \R \to \R$ be such
that $f(0) = 0$. Suppose $f[-]$ preserves positivity on $\bp_G$. Then 
\begin{enumerate}
\item $f[-]$ preserves positivity on $\bp_{A \cup C}$ and on $\bp_{B \cup
C}$;
\item $f$ is continuous on $[0,\infty)$, and 
\item $f[-]$ is Loewner super-additive on $\bp_m(\R)$, whenever there
exist $A' \subset A, C' \subset C, B' \subset B$ such that
$G_{A'},G_{C'},G_{B'}$ are cliques of size $m$, and every vertex of $A'$
and $B'$ is connected to every vertex in $C'$.
\end{enumerate}
\end{theorem}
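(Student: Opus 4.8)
The plan is to prove the three conclusions one at a time, each by exhibiting a suitable family of matrices in $\bp_G$ and then applying $f[-]$ to it.

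Conclusion (1) is a zero-padding argument. Given $M\in\bp_{G_{A\cup C}}$, place it on the vertices $A\cup C$ and pad with zeros on $B$ to obtain $M\oplus{\bf 0}\in\bp_G$; the zero pattern of $G$ is respected precisely because $C$ separates $A$ from $B$, so no $A$--$B$ edge is in danger. Applying $f[-]$ and using $f(0)=0$ gives $f[M]\oplus{\bf 0}=f[M\oplus{\bf 0}]\in\bp_G$, hence $f[M]\in\bp_{G_{A\cup C}}$; the statement for $G_{B\cup C}$ is symmetric. Conclusion (2) follows because $G$ contains an edge, along which $\bp_{2}=\bp_{K_2}$ zero-pads into $\bp_G$, so $f[-]$ preserves positivity on $\bp_2$; then $f$ is continuous on $(0,\infty)$ by \cite[Theorem~1.2]{Horn}, and right-continuous at $0$ --- hence, with $f(0)=0$, continuous on $[0,\infty)$ --- by the argument opening the proof of \cite[Theorem~C]{Guillot_Khare_Rajaratnam2012}, exactly as in the proof of Theorem~\ref{Tchordal}.

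The heart of the matter is (3). Fix $A',C',B'$ as in the statement with $|A'|=|C'|=|B'|=m$, fix $M_1,M_2\in\bp_m(\R)$, and relabel the vertices of $G$ so that those of $A',C',B'$ come first, in that order. Consider the $3m\times 3m$ matrix
\[
N:=\begin{pmatrix} M_1 & M_1 & {\bf 0}\\ M_1 & M_1+M_2 & M_2\\ {\bf 0} & M_2 & M_2\end{pmatrix},
\]
extended by zeros on the remaining vertices of $G$. Setting $L_1:=(\Id_m\ \ \Id_m\ \ {\bf 0})$ and $L_2:=({\bf 0}\ \ \Id_m\ \ \Id_m)$ (each $m\times 3m$), one has $N=L_1^{T}M_1L_1+L_2^{T}M_2L_2$, so $N\succeq 0$; and $N\in\bp_G(\R)$, because $G_{A'},G_{C'},G_{B'}$ are complete, every vertex of $A'$ and of $B'$ is adjacent to every vertex of $C'$, and the $A'$--$B'$ block of $N$ vanishes (as it must, since $C$ separates $A$ from $B$). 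Therefore $f[N]\in\bp_G$, and in particular $f[N]\succeq 0$. Because $f(0)=0$ the $A'$--$B'$ block of $f[N]$ is still the zero matrix, and $f[M_1],f[M_2]$ each occur twice inside $f[N]$; a short block computation then gives the \emph{identity}
\[
f[M_1+M_2]-f[M_1]-f[M_2]=\begin{pmatrix}-\Id_m & \Id_m & -\Id_m\end{pmatrix}\,f[N]\,\begin{pmatrix}-\Id_m & \Id_m & -\Id_m\end{pmatrix}^{T},
\]
whose right-hand side is positive semidefinite, being a congruence of $f[N]\succeq 0$. Since $M_1,M_2\in\bp_m(\R)$ were arbitrary, $f[-]$ is Loewner super-additive on $\bp_m(\R)$.

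The step I expect to be the main obstacle is choosing $N$ in (3) correctly. It must be positive semidefinite for \emph{every} pair of positive semidefinite $M_1,M_2$ --- guaranteed by the decomposition $N=L_1^{T}M_1L_1+L_2^{T}M_2L_2$; it must be compatible with the sparsity pattern forced by the separator $C$, the key point being that the $A'$--$B'$ corner is zero, which is exactly what annihilates the cross term that would otherwise obstruct the computation; and it must be arranged so that one \emph{fixed} congruence of $f[N]$ isolates the super-additivity defect $f[M_1+M_2]-f[M_1]-f[M_2]$. Unlike the proof of Theorem~\ref{Tchordal}, this argument uses neither continuity of $f$ nor the classification in Theorem~\ref{Tsuperadd}, and yields Loewner super-additivity on all of $\bp_m(\R)$ directly, rather than first on rank-one matrices.
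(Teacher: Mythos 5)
Your proof is correct, and for parts (1) and (2) it coincides with the paper's argument (induced subgraphs/zero-padding, then continuity via \cite[Theorem 1.2]{Horn} and the right-continuity argument from \cite[Theorem C]{Guillot_Khare_Rajaratnam2012}). For part (3) your test matrix $N$ is exactly the paper's $\calm_1(M_1)+\calm_2(M_2)$, so the construction is the same; the difference lies in how you extract super-additivity from $f[N]\succeq 0$. The paper first perturbs by $\calm_1(\epsilon \Id_m)+\calm_2(\epsilon \Id_m)$ to make the corner blocks invertible, invokes the factorization \eqref{Emiracle_decomposition} to identify the middle Schur complement $f[M_1+M_2]-f[M_1]-f[M_2]+O(\epsilon)$ as positive semidefinite, and then lets $\epsilon\to 0$. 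You instead hit $f[N]$ with the single congruence $Y f[N] Y^T$ for $Y=(-\Id_m\ \ \Id_m\ \ -\Id_m)$, which (as one checks directly, since $f[N]Y^T=({\bf 0},\,f[M_1+M_2]-f[M_1]-f[M_2],\,{\bf 0})^T$) yields the super-additivity defect exactly, with no regularization, no invertibility hypothesis, and no limiting step. This is a genuine simplification of the paper's final computation, and it buys a cleaner statement of why the argument works: the vector $Y^T$ lies in the kernel of the first and third block rows of $N$ by construction, so positivity of $f[N]$ transfers verbatim to the defect. Both routes give Loewner super-additivity on all of $\bp_m(\R)$ directly, since $M_1,M_2$ are arbitrary.
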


\noindent Note that the converse to Theorem \ref{Tseparator} was proved
in Theorem \ref{Tchordal}(1).

\begin{proof}
Clearly, $f[-]$ preserves positivity on $\bp_{A \cup C}$ and on $\bp_{B
\cup C}$ since they are induced subgraphs of $G$. Also $f$ preserves
positivity on $\bp_{K_2}(\R) = \bp_2(\R)$, whence $f$ is continuous on
$(0,\infty)$ by \cite[Theorem 1.2]{Horn}. Moreover, $f$ is
right-continuous at $0$ as shown at the beginning of the proof of
\cite[Theorem C]{Guillot_Khare_Rajaratnam2012}. Now, write the vertices
of $G$ in the order $A', C', B', V \setminus (A' \cup C' \cup B')$, and
consider the matrices
\[ \calm_1(N) := \begin{pmatrix}
N & N & {\bf 0} & {\bf 0}\\
N & N & {\bf 0} & {\bf 0}\\ 
{\bf 0} & {\bf 0} & {\bf 0} & {\bf 0}\\ 
{\bf 0} & {\bf 0} & {\bf 0} & {\bf 0}
\end{pmatrix}, \qquad
\calm_2(N) := \begin{pmatrix}
{\bf 0} & {\bf 0} & {\bf 0} & {\bf 0}\\
{\bf 0} & N & N & {\bf 0}\\
{\bf 0} & N & N & {\bf 0}\\
{\bf 0} & {\bf 0} & {\bf 0} & {\bf 0}
\end{pmatrix}, \qquad N \in \bp_m(\R). \]

\noindent Clearly $\calm_1(N) \in \bp_{G_{A \cup C}}$ and $\calm_2(N)\in
\bp_{G_{B \cup C}}$ for all $N \in \bp_m(\R)$. Thus given any $N_1, N_2
\in \bp_m(\R)$ and $\epsilon > 0$, it follows that
$f[\calm_1(N_1) + \calm_2(N_2)] + \calm_1[\epsilon \Id_m] +
\calm_2[\epsilon \Id_m] \in \bp_G(\R)$, i.e.,
\begin{equation*}
\begin{pmatrix} f[N_1] + \epsilon \Id_m & f[N_1] + \epsilon \Id_m & {\bf
0}_{m \times m}\\ f[N_1] + \epsilon \Id_m & f[N_1 + N_2] + 2 \epsilon
\Id_m & f[N_2] + \epsilon \Id_m\\ {\bf 0}_{m \times m} & f[N_2] +
\epsilon \Id_m & f[N_2] + \epsilon \Id_m \end{pmatrix} \in \bp_G(\R). 
\end{equation*}

\noindent Proceeding as in the proof of Theorem \ref{Tchordal} (see
\eqref{Esuper_rank1}), it follows that $f[N_1 + N_2] - f[N_1] - f[N_2]
\in \bp_m(\R)$, i.e., $f[-]$ is Loewner super-additive on $\bp_m(\R)$.
\end{proof}

Having proved Theorem \ref{Tseparator}, it is now possible to prove
Proposition \ref{Pcoalescence} about coalescences of graphs.

\begin{proof}[Proof of Proposition \ref{Pcoalescence}]
We prove the result by induction on $k$, with the base case of $i=2$ and
the higher cases proved similarly. Let $G'_i$ denote the coalescence of
the graphs $G_1, \dots, G_i$, for each $i = 1, \dots, k$. Applying
Theorem \ref{Tseparator} with $|C| = 1$ corresponding to the vertex along
which $G'_{i-1}$ and $G_i$ are coalesced, we conclude that $f[-]$
preserves positivity on $\bp_{G_i}$ for all $i$, $f$ is continuous on
$[0,\infty)$, and $f$ is super-additive on $[0,\infty)$. Similarly, if
$f[-]$ preserves positivity on $\bp_{G_i}$ for all $i$, and is
super-additive on $[0,\infty)$, then $f[-]$ preserves positivity on
$\bp_G$ by Theorem \ref{Tchordal}. 
\end{proof}

As a consequence of Propositions \ref{Pcoalescence} and \ref{Pcycle}, we
determine the critical exponents of coalescences of cycles. Such graphs,
often called cactus graphs or cactus trees, are useful in applications
and have recently been used to compare sets of related genomes
\cite{paten2010_cactus}. 

\begin{corollary}
Suppose $G$ is a connected cactus graph with at least $3$ vertices.
Then,
\[ \calh_G = \calh_G^\psi = [1,\infty), \qquad [2,\infty) \subset
\calh_G^\phi \subset [1,\infty). \]
\end{corollary}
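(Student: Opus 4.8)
The plan is to reduce everything to the block structure of $G$ and then invoke Proposition \ref{Pcoalescence}. First I would recall that every block of a connected cactus is either a bridge (a copy of $K_2$) or a cycle $C_n$ with $n \geq 3$: a $2$-connected graph on at least three vertices in which every edge lies on at most one cycle must be a single cycle. Ordering the blocks $B_1, \dots, B_k$ along the block--cut tree so that for each $i \geq 2$ the block $B_i$ meets $B_1 \cup \dots \cup B_{i-1}$ in exactly one (cut) vertex, the graph $G$ is exhibited as an iterated pairwise coalescence of $B_1, \dots, B_k$, each of which is connected with at least one edge.

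If $G$ consists of a single block, then since it has at least three vertices that block is $2$-connected with at least three vertices, so $G = C_n$ for some $n \geq 3$, and the conclusion is immediate from Proposition \ref{Pcycle} (using also $\calh^\phi_{C_3} = \calh^\phi_{K_3} = [1,\infty)$ and $\calh^\phi_{C_4} = [2,\infty)$, both of which lie in $[1,\infty)$ and contain $[2,\infty)$). Otherwise $k \geq 2$ and Proposition \ref{Pcoalescence} applies, giving
\[ \calh^\psi_G = [1,\infty) \cap \bigcap_{i=1}^{k} \calh^\psi_{B_i}, \qquad \calh^\phi_G = [1,\infty) \cap \bigcap_{i=1}^{k} \calh^\phi_{B_i}. \]
I would then substitute the $\calh$-sets of the two block types: $\calh^\psi_{K_2} = \calh^\phi_{K_2} = [0,\infty)$ by Theorem \ref{Tcomplete}, and $\calh^\psi_{C_n} = [1,\infty)$, $[2,\infty) \subseteq \calh^\phi_{C_n} \subseteq [1,\infty)$ for all $n \geq 3$ by Proposition \ref{Pcycle}. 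Hence every factor of the first intersection contains $[1,\infty)$ and every factor of the second contains $[2,\infty)$, so $\calh^\psi_G = [1,\infty)$ and $[2,\infty) \subseteq \calh^\phi_G \subseteq [1,\infty)$.

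For the remaining set $\calh_G$ I would argue by sandwiching rather than re-running the coalescence argument: one always has $\calh^\psi_G \subseteq \calh_G$, while $\calh_G \subseteq [1,\infty)$ by Corollary \ref{CE0} since a connected graph on more than two vertices is not a disjoint union of copies of $K_2$. Combining these with $\calh^\psi_G = [1,\infty)$ forces $\calh_G = [1,\infty)$, which completes the proof. The only step that is not purely mechanical is the structural reduction in the first paragraph --- identifying the blocks of a cactus as edges and cycles, and observing that Proposition \ref{Pcoalescence} applies regardless of which cut vertices are used to glue the blocks together (precisely the content of the remark following that proposition). Once this is set up, the argument is just a matter of substituting the previously computed $\calh$-sets of $K_2$ and $C_n$ and intersecting.
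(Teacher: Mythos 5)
Your proof is correct and follows essentially the same route as the paper, which simply cites Propositions \ref{Pcycle} and \ref{Pcoalescence} (together with \eqref{Ehcec}); you have merely made explicit the block decomposition of a cactus into bridges and cycles and the resulting iterated coalescence. The only cosmetic difference is that you obtain $[2,\infty)\subset\calh_G^\phi$ directly from the intersection formula and the containments $\calh_G^\psi\subset\calh_G\subset[1,\infty)$, whereas the paper invokes \eqref{Ehcec} for that step; both are valid.
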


\begin{proof}
This follows immediately from Propositions \ref{Pcycle} and
\ref{Pcoalescence} and Equation \eqref{Ehcec}.
\end{proof}

\subsection*{Entrywise powers and correlation matrices}

Recall that a correlation matrix is a positive semidefinite matrix with
ones on its main diagonal.
Motivated by applications, a natural question that comes to mind is to
compute the critical exponent for correlation matrices of fixed
dimension, under rank and sparsity constraints.
We now explain why a stronger phenomenon holds: namely, the set of powers
preserving positivity, and hence the critical exponent, remain the same
when restricted to correlation matrices.

\begin{proposition}\label{Pcorrelation}
Fix integers $1 \leq k \leq n$ and a graph $G$ on $n$ vertices. Given $I
\subset \R$, let $\bp_n^k(I)$ denote the set of matrices in $\bp_n(I)$ of
rank at most $k$. Now let $\mathcal{C}_n^k(I)$ and $\mathcal{C}_G(I)$
denote the set of $n \times n$ correlation matrices in $\bp_n^k(I)$ and
in $\bp_G(I)$ respectively. Then, $x^\alpha$ preserves
$\mathcal{C}_n^k([0,\infty))$ (respectively, $\mathcal{C}_G([0,\infty))$)
if and only if $x^\alpha$ preserves $\bp_n^k([0,\infty))$ (respectively
$\bp_G([0,\infty))$). A similar result holds for the power functions
$\phi_\alpha, \psi_\alpha$ acting on $\mathcal{C}_n^k(\R),
\mathcal{C}_G(\R)$.
\end{proposition}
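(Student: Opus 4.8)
The plan is to prove only the nontrivial direction of each equivalence, since the reverse implications are immediate: $\mathcal{C}_n^k(I) \subset \bp_n^k(I)$ and $\mathcal{C}_G(I) \subset \bp_G(I)$, so any entrywise map sending the larger class into $\bp_n$ does the same on the correlation subclass. The engine of the argument will be the \emph{diagonal equivariance} of entrywise powers. If $A = (a_{ij})$ is positive semidefinite with strictly positive diagonal and $D := \mathrm{diag}(\sqrt{a_{11}}, \dots, \sqrt{a_{mm}})$, then $C := D^{-1} A D^{-1}$ is a correlation matrix with the same rank and the same off-diagonal zero pattern as $A$, and for every $\alpha > 0$,
\[ A^{\circ \alpha} = D^{\circ \alpha}\, C^{\circ \alpha}\, D^{\circ \alpha}, \qquad \psi_\alpha[A] = D^{\circ \alpha}\, \psi_\alpha[C]\, D^{\circ \alpha}, \qquad \phi_\alpha[A] = D^{\circ \alpha}\, \phi_\alpha[C]\, D^{\circ \alpha}, \]
because $\psi_\alpha(\lambda x) = \lambda^\alpha \psi_\alpha(x)$ and $\phi_\alpha(\lambda x) = \lambda^\alpha \phi_\alpha(x)$ for $\lambda, x > 0$. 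Since $D^{\circ \alpha}$ is an invertible diagonal matrix with positive entries, conjugation by it preserves membership in $\bp_m$, preserves rank and the graph constraint, and (in the nonnegative-entries setting) preserves nonnegativity of entries. Hence, for matrices with positive diagonal, $A$ is sent into the relevant cone if and only if $C$ is; and $C$ lies in the appropriate correlation class whenever $A$ lies in $\bp_m^k$ (resp.\ $\bp_{G'}$ for the relevant graph $G'$).

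Next I would reduce to the positive-diagonal case. If $A \in \bp_n(I)$ has vanishing diagonal entries, then the corresponding rows and columns of $A$, hence of $A^{\circ \alpha}, \psi_\alpha[A], \phi_\alpha[A]$, vanish; after a permutation write $A = A' \oplus 0$ with $A'$ the principal submatrix on the positive-diagonal indices, so it suffices to treat $A'$. The first step produces the correlation matrix $C'$ associated to $A'$, living on $n' \le n$ vertices, and the remaining task is to realize $C'$ \emph{inside the fixed ambient data} so that the hypothesis applies. For the sparsity constraint one sets $\widehat{C} := C' \oplus \Id_{n-n'} \in \mathcal{C}_G(I)$, assigning each deleted vertex all-zero off-diagonal entries (always permissible in $\bp_G$) and using that the subgraph induced on the surviving vertices is exactly $G'$. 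For the rank constraint one takes a Gram decomposition $C' = V^T V$ with $V$ having at most $k$ rows and unit-norm columns, and appends $n - n'$ further columns, repeating an existing column of $V$ (which keeps the entries in $I$); this yields $\widehat{C} \in \mathcal{C}_n^k(I)$ with $C'$ as a principal submatrix and the same rank. In either case the hypothesis gives $\widehat{C}^{\circ\alpha} \in \bp_n$ (resp.\ $\psi_\alpha[\widehat{C}], \phi_\alpha[\widehat{C}] \in \bp_n$), hence $(C')^{\circ\alpha}$ (resp.\ $\psi_\alpha[C'], \phi_\alpha[C']$) is positive semidefinite; scaling back by $D'$ and padding with zeros gives the conclusion for $A$.

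The point needing real care — and the one I expect to be the main obstacle — is this last embedding step: correlation matrices necessarily have unit, hence positive, diagonal, so there is no correlation matrix "of the same shape" as a positive semidefinite matrix with zero diagonal entries, and one must pass to an induced/principal submatrix and re-embed into the fixed dimension or fixed graph. For the sparsity constraint this is painless, since appending vertices with all-zero off-diagonal entries never violates the graph structure; for the rank constraint the re-embedding must not increase the rank, which is precisely why the Gram-vector construction is used in place of an orthogonal direct sum. Everything else is routine: the argument is purely algebraic, needs no continuity or limiting steps, and the $\psi_\alpha$ and $\phi_\alpha$ cases differ from the $x^\alpha$ case only in the bookkeeping of signs.
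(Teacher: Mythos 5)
Your proof is correct and follows essentially the same route as the paper: write $A = DCD$ with $D = \mathrm{diag}(\sqrt{a_{11}},\dots,\sqrt{a_{nn}})$ and use the homogeneity $A^{\circ\alpha} = D^{\circ\alpha} C^{\circ\alpha} D^{\circ\alpha}$ (and likewise for $\psi_\alpha,\phi_\alpha$) to transfer positivity between $A$ and the correlation matrix $C$, which shares its rank and zero pattern. Your extra care with vanishing diagonal entries (passing to the principal submatrix on the positive-diagonal indices and re-embedding via an identity block, respectively a repeated Gram column) correctly fills in a degenerate case that the paper's two-line proof handles only implicitly by stating the argument for entries in $(0,\infty)$.
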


\begin{proof}
Notice that if $A \in \bp_n(\R)$, then setting $D$ to be the diagonal
matrix with entries $\sqrt{a_{jj}}$, we have: $A = DCD$ for some
correlation matrix $C \in \mathcal{C}_n$. Moreover, if $a_{jj} \neq 0\
\forall j$, then $A,C$ have the same rank and sparsity pattern. All
assertions for matrices in $\bp_n((0,\infty))$ now follow by observing
that $A^{\circ \alpha} = D^{\circ \alpha} C^{\circ \alpha} D^{\circ
\alpha}$ for all $\alpha > 0$, whenever $A$ has nonnegative entries. A
similar argument shows all of the assertions for matrices in $\bp_n(\R)$.
\end{proof}

\subsection*{Concluding remarks and questions}

The set of powers preserving positivity was determined for many graphs in
the paper, including chordal graphs, cycles, and complete bipartite
graphs. Apart from computing the $\calh$-sets for every graph, the
following natural questions arise: 
\begin{enumerate}
\item In all of the examples of graphs studied in this paper, it has been
shown that $CE_H(G) = CE_H^\psi(G) = r-2$, where $r$ is the largest
integer such that $G$ contains either $K_r$ or $K_r^{(1)}$ as an induced
subgraph. Does the same result hold for all graphs?

\item Are the critical exponents of a graph always integers? Can this be
shown without computing the critical exponents explicitly?
Do these exponents have connections to other (purely combinatorial) graph
invariants?

\item Recall that every chordal graph is perfect. Can the critical
exponent be calculated for other broad families of graphs such as the
family of perfect graphs?
\end{enumerate}

\subsection*{Acknowledgments}

We would like to thank the American Institute of Mathematics (AIM) for
funding and hosting the workshop ``Positivity, graphical models, and
modeling of complex multivariate dependencies'' in October 2014. The
present paper has benefited from discussions with the participants of the
workshop.
We also thank Shmuel Friedland for useful comments, in particular for
pointing out the result in Proposition \ref{Pcorrelation}. Finally, we
thank the referees for useful remarks and suggestions.



\end{document}